\documentclass[a4paper,12pt]{amsart}

\usepackage{amsmath,amsfonts,amsthm,amssymb,ascmac,amscd}
\usepackage{bm}
\usepackage{graphicx}
\usepackage{enumerate}
\usepackage{overpic}
\usepackage{mathtools}
\usepackage{xcolor}
\numberwithin{equation}{section}
\theoremstyle{definition}
\newtheorem{theorem}{Theorem}[section]
\newtheorem*{theorem*}{Theorem}
\newtheorem{definition}[theorem]{Definition}
\newtheorem*{definition*}{Definition}
\newtheorem{proposition}[theorem]{Proposition}
\newtheorem*{proposition*}{Proposition}

\newtheorem*{example*}{Example}
\newtheorem{remark}[theorem]{Remark}
\newtheorem*{remark*}{Remark}

\newtheorem*{recall*}{Recall}
\newtheorem{lemma}[theorem]{Lemma}
\newtheorem*{lemma*}{Lemma}
\newtheorem{corollary}[theorem]{Corollary}
\newtheorem*{corollary*}{Corollary}

\newtheorem*{question*}{Question}

\newtheorem*{conjecture*}{Conjecture}

\newtheorem*{exercise*}{Exercise}
\newtheorem{claim}[theorem]{Claim}
\newtheorem*{claim*}{Claim}

\newtheorem*{fact*}{Fact}
\newtheorem{theorema}{Theorem}
\newtheorem*{theorema*}{Theorem}

\newtheorem{corollarya}[theorema]{Corollary}
\newtheorem*{corollarya*}{Corollary}
\usepackage{hyperref}
\hypersetup{
bookmarksnumbered=true, 
colorlinks=true, 
citecolor=black,
linkcolor=black,
urlcolor=black,
}
\usepackage{cleveref}
\crefname{def}{Definition}{Definitions}
\crefname{theorem}{Theorem}{Theorems}
\crefname{lemma}{Lemma}{Lemmas}
\crefname{corollary}{Corollary}{Corollaries}
\crefname{proposition}{Proposition}{Propositions}
\crefname{example}{Example}{Examples}
\crefname{remark}{Remark}{Remarks}
\crefname{question}{Question}{Questions}
\crefname{equation}{}{}
\crefname{figure}{Figure}{Figures}
\crefname{theorema}{Theorem}{Theorems}
\crefname{corollarya}{Corollary}{Corollaries}
\newcommand{\Z}{\mathbb{Z}}
\newcommand{\R}{\mathbb{R}}



\newcommand{\multicurve}[1]{\mathcal{C}^{[#1]}}
\newcommand{\interpolate}[1]{\mathcal{P}^{[#1]}}
\DeclareMathOperator{\Mod}{Mod}

\allowdisplaybreaks[4]
\title[large-scale geometry of interpolating graphs]{Large-scale geometry of graphs interpolating between curve graphs and pants graphs}
\author{Erika Kuno, Rin Kuramochi, Kento Sakai}

\address{
(Erika Kuno)
College of Engineering,
Shibaura Institute of Technology,
3-7-5 Toyosu, Koto-ku, Tokyo 135-8548, Japan
}
\email{e-kuno@shibaura-it.ac.jp}

\address{
(Rin Kuramochi)
Graduate School of Mathematical Sciences,
The University of Tokyo,
3-8-1, Komaba, Meguro-ku, Tokyo, 153-8914, Japan
}
\email{kuramochi-rin0311@g.ecc.u-tokyo.ac.jp}

\address{
(Kento Sakai)
Graduate School of Mathematical Sciences,
The University of Tokyo,
3-8-1, Komaba, Meguro-ku, Tokyo, 153-8914, Japan
}
\email{kento@ms.u-tokyo.ac.jp}
\subjclass[2020]{57K20, 20F65, 20F67}
\date{\today}
\begin{document}

\begin{abstract}
    We study two types of graphs interpolating between the curve graph and the pants graph from the viewpoint of large-scale geometry.
    One was introduced by Erlandsson and Fanoni, and the other by Mahan Mj. These graphs were developed independently in different contexts.
    In this paper, we provide explicit formulae for computing their quasi-flat ranks.
    These formulae depend on the genus and the number of boundary components of the underlying surface, as well as the interpolation parameter.
    We also classify geometries of the interpolating graphs into the hyperbolic, relatively hyperbolic, and thick cases.
    Our approach relies on the theory of twist-free graphs of multicurves, which is developed by Vokes and Russel.
\end{abstract}
\maketitle

\section{Introduction}
Let $\Sigma=\Sigma_{g,b}$ be a connected, compact, and orientable surface of genus $g$ with $b$ boundary components.
In this paper, we simply refer to the isotopy class of an essential simple closed curve as a \textit{curve}.
A \textit{pants decomposition} of $\Sigma$ is a multicurve whose complementary components are all pairs of pants (so the multicurve has $3g-3+b$ components).
In the context of mapping class groups, 3-manifolds, and Teichmüller theory, graphs whose vertices correspond to curves or multicurves on $\Sigma$ are extensively studied in terms of large-scale geometry.

Harvey \cite{harvey1981boundary} defined the \textit{curve graph} $\mathcal{C}(\Sigma)$ of $\Sigma$, which is the graph whose vertices are curves on $\Sigma$.
Two distinct vertices of $\mathcal{C}(\Sigma)$ are joined by an edge if the two curves corresponding to the vertices are disjoint on $\Sigma$.
We endow each edge of the graph with length one.
Masur and Minsky \cite{masur1999complex} firstly proved that the curve graph is Gromov hyperbolic.
After the original proof, various other proofs have appeared (for example, see \cite{bowditch2006intersection,hamenstadt2007curves,aougab2013hyperbolicity,clay2014hyperbolicity,bowditch2014hyperbolicity,hensel2015hyperbolicity}, also \cite{kuno2016hyperbolicity} for the case of non-orientable surfaces).

As another graph related to topological surfaces, Hatcher and Thurston \cite{hatcher1980presentation} introduced the pants graph $\mathcal{P}(\Sigma)$ of $\Sigma$. 
Here the \textit{pants graph} of $\Sigma$ is a graph whose
vertices are pants decompositions of $\Sigma$.
Two vertices of $\mathcal{P}(\Sigma)$ are joined by an edge if the corresponding pants decompositions differ by an elementary move, i.e., replacing exactly one curve by another curve with minimal intersection in the complementary subsurface.
We equip $\mathcal{P}(\Sigma)$ with a combinatorial metric in which each edge has length 1.
Brock \cite{brock2003weil} proved that the pants graph is quasi-isometric to the Teichmüller space $\mathcal{T}^{\mathrm{WP}}(\Sigma)$ with the Weil-Petersson metric.
In \cite{behrstock2008dimension}, it is proved that the maximal dimension of the Euclidean space quasi-isometrically embedded into $\mathcal{T}^{\mathrm{WP}}(\Sigma)$ is $\lfloor \frac{3g-2+b}{2}\rfloor$.
In particular, $\mathcal{P}(\Sigma)$ is not Gromov hyperbolic if the complexity of $\Sigma$ ($=3g-3+b$) is larger than or equal to $3$.
Brock and Farb \cite{brock2006curvature}, Behrstock \cite{behrstock2006asymptotic}, and Aramayona \cite{aramayona2006fivetimes} independently  proved that $\mathcal{P}(\Sigma)$ is Gromov hyperbolic if the complexity of $\Sigma$ is equal to $2$.\vspace{2mm}

In two distinct approaches, Erlandsson and Fanoni \cite{erlandsson2017multicurve}, and Mahan Mj \cite{mj2009interpolating} defined graphs of multicurves that interpolate between the curve graph and the pants graph.

Firstly, we introduce the \textit{$k$-multicurve graph} $\multicurve{k}(\Sigma)$ of $\Sigma$ defined by Erlandsson and Fanoni \cite{erlandsson2017multicurve}, where $k$ varies from $1$ to $3g-3+b$.
The vertices of the $k$-multicurve graph $\multicurve{k}(\Sigma)$ are the multicurves on $\Sigma$ consisting of exactly $k$ components.
Two distinct vertices of $\multicurve{k}(\Sigma)$ are joined by an edge if the multicurves corresponding to the vertices minimally intersect (see \Cref{definition:k-multicurve-graph} for details).

On the other hand, Mahan Mj \cite{mj2009interpolating} defined the \textit{complexity-$\xi$ graph} $\interpolate{\xi}(\Sigma)$ for integers $\xi$ between $-1$ and $3g-4+b$.
The complexity-$\xi$ graph is obtained by adding extra edges to the pants graph $\mathcal{P}(\Sigma)$.
Namely, the vertices of $\interpolate{\xi}(\Sigma)$ are pants decompositions, and two distinct vertices are joined by an edge either
(i) if the corresponding pants decompositions satisfy the adjacency condition in the pants graph, or
(ii) if they agree after removing a subsurface whose complexity is at least $\xi$ (see \Cref{definition:complexity-xi-graph} for details).
From the definition, if $\xi$ is $-1$ or $0$, the condition (ii) cannot be satisfied.
Hence, the complexity-$(-1)$ and complexity-$0$ graphs coincide with the usual pants graph $\mathcal{P}(\Sigma)$.
Furthermore, the complexity-$(3g-4+b)$ graph is quasi-isometric to the curve graph $\mathcal{C}(\Sigma)$ \cite[Remark 1.4.1]{mj2009interpolating}.
Thus the complexity-$\xi$ graphs interpolate between the curve graph and the pants graph in terms of large-scale geometry.

We equip the two interpolating graphs between the curve graph and the pants graph with the combinatorial metric.
The $k$-multicurve graphs and the complexity-$\xi$ graphs are apparently distinct graphs of multicurves.
However, these two graphs of multicurves are equivalent from the viewpoint of large-scale geometry.
Let $\xi_0$ be the complexity of $\Sigma=\Sigma_{g,b}$, i.e., $\xi_0\coloneqq 3g-3+b$.
\begin{theorema}\label{theorem:A}
    For each integer $k$ with $1\leq k\leq \xi_0$, the $k$-multicurve graph $\multicurve{k}(\Sigma)$ and the complexity-$(\xi_0-k)$ graph $\interpolate{\xi_0-k}(\Sigma)$ are quasi-isometric.
\end{theorema}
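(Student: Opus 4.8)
The plan is to produce explicit coarse maps $\Phi\colon\multicurve{k}(\Sigma)\to\interpolate{\xi_0-k}(\Sigma)$ and $\Psi$ in the reverse direction and to check that they are quasi-inverse quasi-isometries; by the standard criterion (a pair of coarsely Lipschitz maps whose two composites are uniformly close to the identities are quasi-isometries) it then suffices to show that $\Phi$ and $\Psi$ are coarsely Lipschitz and that $\Psi\circ\Phi$ and $\Phi\circ\Psi$ stay within bounded distance of the respective identity maps. I would define $\Phi$ by sending a $k$-multicurve $M$ to an arbitrarily chosen pants decomposition $\widehat M\supseteq M$, and $\Psi$ by sending a pants decomposition $P$ to an arbitrarily chosen sub-multicurve $\check P\subseteq P$ with $k$ components; both choices exist since any multicurve extends to a pants decomposition and $k\le\xi_0$.

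The argument rests on two elementary observations. First, by \Cref{definition:complexity-xi-graph}, two pants decompositions that share a common sub-multicurve with $k$ components agree off a subsurface of complexity $\xi_0-k$, hence are joined by an edge of $\interpolate{\xi_0-k}(\Sigma)$; consequently $\Phi$ is coarsely well defined (any two choices of $\widehat M$ are at distance $\le1$), $\Phi\circ\Psi$ is within distance $1$ of the identity (as $P$ and $\widehat{\check P}$ both contain $\check P$), and $\Phi$ is coarsely onto. Second, any two $k$-component sub-multicurves $M_1,M_2$ of a single pants decomposition $P$ are disjoint, hence at $\multicurve{k}(\Sigma)$-distance at most $\xi_0$: one passes from $M_1$ to $M_2$ by exchanging the curves of $M_1\smallsetminus M_2$ for those of $M_2\smallsetminus M_1$ one at a time, and each exchange is an edge because the two distinct disjoint curves involved necessarily lie in a complementary subsurface of complexity at least $2$, in which disjointness is the minimal intersection (this is where I must read off the precise meaning of ``minimally intersect'' from \Cref{definition:k-multicurve-graph}). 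It follows that $\Psi$ is coarsely well defined and that $\Psi\circ\Phi$ is within distance $\xi_0$ of the identity, since $M$ and $\check{\widehat M}$ are both $k$-component sub-multicurves of $\widehat M$.

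It remains to check that $\Phi$ and $\Psi$ are coarsely Lipschitz. For $\Psi$: an edge of $\interpolate{\xi_0-k}(\Sigma)$ between $P$ and $P'$ either comes from condition (ii), in which case $P$ and $P'$ share a $k$-component sub-multicurve $N$ and $d_{\multicurve{k}}(\check P,\check{P'})\le d_{\multicurve{k}}(\check P,N)+d_{\multicurve{k}}(N,\check{P'})\le2\xi_0$ by the second observation, or it comes from an elementary move, which by the definition occurs only when $k=\xi_0$, in which case both graphs are literally $\mathcal{P}(\Sigma)$ and the claim is immediate. For $\Phi$: given an edge between $M$ and $M'$ in $\multicurve{k}(\Sigma)$, I expect \Cref{definition:k-multicurve-graph} to force $M=N\cup\{c\}$ and $M'=N\cup\{c'\}$ with $N=M\cap M'$ and $c,c'$ realizing the minimal intersection inside the component $Y$ of $\Sigma\smallsetminus N$ containing them. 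If $c$ and $c'$ are disjoint, then $N\cup\{c,c'\}$ is a multicurve containing both $M$ and $M'$; extending it to a pants decomposition $P$ and applying the first observation gives $d_{\interpolate{\xi_0-k}}(\Phi M,\Phi M')\le2$. If $c$ and $c'$ intersect, minimality forces $\xi(Y)=1$, so one can choose pants decompositions $P\supseteq M$ and $P'\supseteq M'$ agreeing outside $Y$; these differ by an elementary move, so $d_{\interpolate{\xi_0-k}}(\Phi M,\Phi M')\le3$ by the first observation (twice) and the elementary-move edge (once).

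The main obstacle I foresee is not in the coarse geometry, which is short, but at the interface with the two combinatorial definitions: extracting from \Cref{definition:k-multicurve-graph} that an edge changes exactly one curve and confines the two exchanged curves to a complementary piece of complexity $\le1$ (so that the elementary-move bridge above is actually available), and treating the degenerate endpoint $k=\xi_0$, where both families collapse to the pants graph and one must verify that the two notions of edge genuinely agree. A secondary point requiring care is that the first observation should be checked to be independent of how the shared $k$-multicurve sits inside the two pants decompositions, i.e.\ that the relevant complement has complexity exactly $\xi_0-k$ irrespective of whether the shared curves are separating.
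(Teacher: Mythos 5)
Your proposal takes essentially the same approach as the paper: the paper's map $I$ is your $\Phi$, and the paper proves precisely your two observations (any two pants-decomposition extensions of a $k$-multicurve are adjacent in $\interpolate{\xi_0-k}(\Sigma)$, and any two $k$-sub-multicurves of one pants decomposition are within bounded $\multicurve{k}$-distance, the paper's sharper constant being $\min\{k,\xi_0-k\}$ rather than your $\xi_0$), deriving quasi-density and the two Lipschitz bounds from them; the only cosmetic difference is that you package the lower bound via an explicit quasi-inverse $\Psi$ while the paper argues directly along a geodesic in $\interpolate{\xi_0-k}(\Sigma)$. One slip to fix: you claim an elementary-move edge of $\interpolate{\xi_0-k}(\Sigma)$ ``occurs only when $k=\xi_0$,'' but condition (i) is in force for every $k$; this is harmless, since an elementary move leaves $\xi_0-1\ge k$ curves fixed, so $P$ and $P'$ still share a $k$-sub-multicurve and fall under your first case.
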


One quasi-isometry invariant of metric spaces is the \textit{quasi-flat rank}, defined as the maximal dimension of the Euclidean space that can be quasi-isometrically embedded into the metric space.
Since the two interpolating graphs are quasi-isometric, they share all quasi-isometry invariants, including the quasi-flat rank.

\begin{theorema}\label{theorem:B}
    Let $k$ be an integer with $1\leq k\leq \xi_0$.
    Then the quasi-flat rank of the $k$-multicurve graph $\multicurve{k}(\Sigma)$ and the complexity-$(\xi_0-k)$ graph $\interpolate{\xi_0-k}(\Sigma)$ are equal to 
    \begin{equation}\label{eq:quasi-flat-rank-for-k-multicurve-graph}
        m(g,b,k)\coloneqq 
        \begin{cases}
            \min \left\{\left\lfloor \frac{2g-2+b}{a(3g-2+b-k)}\right\rfloor, \left\lfloor \frac{3g-2+b}{3g-1+b-k}\right\rfloor \right\} & (b,k)\neq (0,1) \\
            1 & (b,k)=(0,1),
        \end{cases}
    \end{equation}
    where $a(x)\coloneqq \lceil (2x+1)/3 \rceil$ for $x\in \Z$.
\end{theorema}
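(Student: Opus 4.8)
The plan is to deduce \cref{theorem:B} from the hierarchically hyperbolic structure on $\multicurve{k}(\Sigma)$ (which holds because it is a twist-free graph of multicurves in the sense of Vokes and Russell) together with the rank theorem of Behrstock--Hagen--Sisto for quasi-flats in hierarchically hyperbolic spaces; by \cref{theorem:A} it then suffices to treat $\multicurve{k}(\Sigma)$. In this structure the domains are the \emph{witnesses}, i.e.\ the essential subsurfaces $W\subseteq\Sigma$ that cannot be isotoped off any $k$-component multicurve, two witnesses are orthogonal exactly when they are disjoint, and the hyperbolic space attached to $W$ is its curve graph $\mathcal{C}(W)$. First I would identify the witnesses by a short count: completing a pants decomposition of $\Sigma$ from $\partial W$ shows that the curves not lying inside $W$ number $\xi_0-\xi(W)$ and form the largest multicurve disjoint from $W$, so $W$ is a witness precisely when $\xi_0-\xi(W)<k$, i.e.\ when the complexity $\xi(W)=3g(W)-3+b(W)$ satisfies $\xi(W)\geq \xi_0-k+1=:c=3g-2+b-k$; in particular $1\leq c\leq\xi_0$ and $\mathcal{C}(W)$ is unbounded for every witness. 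By the rank theorem the quasi-flat rank of $\multicurve{k}(\Sigma)$ equals the maximal size of a collection of pairwise disjoint witnesses; since $\Sigma$ is always a witness this rank is at least $1$, and a collection of two or more witnesses consists entirely of proper subsurfaces, each with at least one boundary curve.

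Next I would establish the upper bound. Let $W_1,\dots,W_r$ be pairwise disjoint witnesses with $r\geq 2$, so each $W_i$ is proper; write $g_i,b_i$ for its genus and number of boundary components, so that $-\chi(W_i)=2g_i-2+b_i$ and $\xi(W_i)+1=3g_i-2+b_i=-\chi(W_i)+g_i$. Because the $W_i$ are disjoint and essential, the complement of their union has no disc or sphere components, hence non-positive Euler characteristic and genus at most $g-\sum_i g_i$; this gives the two estimates $\sum_i(-\chi(W_i))\leq-\chi(\Sigma)=2g-2+b$ and $\sum_i(\xi(W_i)+1)=\sum_i(-\chi(W_i))+\sum_i g_i\leq(2g-2+b)+g=3g-2+b$. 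Inserting $\xi(W_i)+1\geq c+1=3g-1+b-k$ into the second yields $r\leq\lfloor(3g-2+b)/(3g-1+b-k)\rfloor$. For the first I would record the elementary fact that a proper essential subsurface with $\xi(W)\geq c$ has $-\chi(W)\geq a(c)$, with equality iff $\xi(W)=c$ and $g(W)=\lfloor(c+2)/3\rfloor$ (minimise $-\chi(W)=\xi(W)+1-g(W)$ subject to $b(W)\geq1$); hence $r\cdot a(c)\leq 2g-2+b$, i.e.\ $r\leq\lfloor(2g-2+b)/a(3g-2+b-k)\rfloor$. Thus the rank is at most $\max\bigl(1,\min\{\lfloor(2g-2+b)/a(3g-2+b-k)\rfloor,\lfloor(3g-2+b)/(3g-1+b-k)\rfloor\}\bigr)$, and a short arithmetic check shows that the inner $\min$ is at least $1$ unless $(b,k)=(0,1)$; so this bound equals $m(g,b,k)$, and the exceptional line of \eqref{eq:quasi-flat-rank-for-k-multicurve-graph} appears exactly because $\Sigma$ alone contributes a quasi-line when $\Sigma$ is closed and $k=1$.

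It then remains to construct $n:=m(g,b,k)$ pairwise disjoint essential subsurfaces of complexity $\geq c$ when $n\geq 2$, which I would do by hand using the dual graph of a pants decomposition of $\Sigma$: its $2g-2+b$ vertices are the pairs of pants and its edges are the cut curves, and a connected set of $v$ pants containing $e$ of these curves forms a subsurface of genus $e-v+1$ and complexity $e$. I would then assemble $W_1,\dots,W_n$ from disjoint such blocks, each of complexity exactly $c$: give $\min(n,\lfloor g/\gamma\rfloor)$ of them the maximal genus $\gamma:=\lfloor(c+2)/3\rfloor$ (each using $a(c)$ pants), distribute any remaining genus among the others while keeping each genus $\leq\gamma$, and glue the leftover pairs of pants on as filler. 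When $n\gamma\leq g$ the blocks consume $n\cdot a(c)\leq 2g-2+b$ pants and when $n\gamma>g$ they consume $n(c+1)-g\leq(3g-2+b)-g=2g-2+b$ pants, by the two inequalities defining $m(g,b,k)$; in either regime the pants, the genus, and the boundary curves of $\Sigma_{g,b}$ suffice to realise the configuration with the $W_i$ manifestly disjoint. Combined with \cref{theorem:A} and the previous paragraph, this yields \cref{theorem:B}.

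I expect the genuine difficulty to lie in this last construction: although the Euler-characteristic and genus budgets make $m(g,b,k)$ transparent as an upper bound, exhibiting the disjoint subsurfaces inside a fixed $\Sigma_{g,b}$ requires care in the low-complexity regime (for $c\leq 2$ the extremal blocks are four-holed spheres and one-holed tori) and when $g$ or $b$ is small, and one must verify that the $\min$ in \eqref{eq:quasi-flat-rank-for-k-multicurve-graph} is actually attained rather than merely an upper bound for the rank; the passage between proper subsurfaces and the whole surface $\Sigma$, responsible for the $(b,k)=(0,1)$ exception, also needs to be handled carefully throughout.
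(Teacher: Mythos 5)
Your overall strategy is the same as the paper's: identify the witnesses for $\multicurve{k}(\Sigma)$ as the connected essential subsurfaces of complexity at least $c=3g-2+b-k$ (this is \Cref{proposition:witness-for-k-multicurve}), invoke Vokes' hierarchically hyperbolic structure for twist-free multicurve graphs to reduce the quasi-flat rank to the maximal number of pairwise disjoint witnesses, and then prove the combinatorial formula via an Euler-characteristic/genus budget for the upper bound and an explicit construction for the lower bound (\Cref{theorem:MaximumNumberForSubsurfaces} in the paper). Your two estimates for the upper bound — $r\leq\lfloor(3g-2+b)/(c+1)\rfloor$ coming from $\sum\chi(W_i)\geq\chi(\Sigma)$ and $\sum g_i\leq g$, and $r\leq\lfloor(2g-2+b)/a(c)\rfloor$ coming from $-\chi(W_i)\geq a(c)$ for a proper subsurface with $\xi(W_i)\geq c$ — are exactly the paper's two bounds, merely packaged for an arbitrary collection of disjoint witnesses rather than a decomposition into witnesses; the computation $c+1-\lfloor(c+2)/3\rfloor=\lceil(2c+1)/3\rceil$ is also the correct minimisation. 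Your handling of the $(b,k)=(0,1)$ exception via the observation that $\Sigma$ itself is always a witness is correct.

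The lower bound, however, is only sketched, and this is precisely where the paper does its work. The dual-graph description of pants decompositions you give is accurate (a connected set of $v$ pants containing $e$ interior gluing curves has genus $e-v+1$ and complexity $e$), and the two pants-budget inequalities you record are consistent with the formula, but you do not verify that the prescribed blocks can actually be assembled into a connected dual graph of first Betti number $g$ with $b$ free ends — the genus of $\Sigma$ is not simply the sum of the block genera plus the complement genus, since loops in the contracted graph also contribute, and one must check that each block is a connected subgraph, that the blocks are pairwise disjoint, and that the filler pants can be distributed so as to absorb the remaining genus and boundary. These are exactly the delicate points in the low-$c$ and low-$g,b$ regimes that you flag. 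The paper resolves them by an explicit induction on $\xi(\Sigma)$ split into cases according to $c\bmod 3$, with concrete separating curves producing subsurfaces $\Sigma_{(\xi+2)/3,1}$, $\Sigma_{(\xi+1)/3,2}$, $\Sigma_{\xi/3,3}$, etc., and several further subcases when $b\in\{0,1\}$; it also gives a separate direct construction of a quasi-isometric embedding $\Z^m\to\multicurve{k}(\Sigma)$ using the distance formula (\Cref{theorem:quasi-flat-embedding-rank-nu}). Your proposal correctly identifies the route and the shape of the argument, and you are candid that the construction is not completed; as written, that step still needs to be filled in before this is a proof.
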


The essential part of this theorem relies on the results of \cite{vokes2022hirarchical}.
In the paper \cite{vokes2022hirarchical}, Vokes introduced the notion of a \textit{twist-freeness} for graphs whose vertices correspond to multicurves (see \Cref{definition:twistfree-multicurve-graph}).
She proved that a twist-free graph of multicurves is a hierarchically hyperbolic space.
This result allows us to apply the theory of hierarchically hyperbolic spaces,  developed by Behrstock, Hagen and Sisto \cite{behrstock2017hierarchyI,behrstock2019hierarchyII}, to twist-free graphs of multicurves.
This theory reveals many quasi-isometric properties of hierarchically hyperbolic spaces.
By \cite{vokes2022hirarchical}, the large-scale geometry of a twist-free graph of multicurves $\mathcal{G}(\Sigma)$ is determined by the set of \textit{witnesses} for $\mathcal{G}(\Sigma)$.
A witness for $\mathcal{G}(\Sigma)$ is a subsurface of $\Sigma$ that intersects the multicurve corresponding to every vertex of $\mathcal{G}(\Sigma)$.
By \cite[Corollaries 1.3 and 1.4]{vokes2022hirarchical}, the quasi-flat rank of $\mathcal{G}(\Sigma)$ is bounded from above by the maximum number of pairwise disjoint witnesses for $\mathcal{G}(\Sigma)$.
We prove that the maximum number of pairwise disjoint witnesses for the $k$-multicurve graph $\multicurve{k}(\Sigma)$ is given by $m(g,b,k)$ in \Cref{theorem:B}.
We also show that there exists a quasi-isometric embedding $\Z^{m(g,b,k)} \to \multicurve{k}(\Sigma)$.
It is worth noting that the quasi-flat rank is, in general, difficult to give a bound from above, and our upper bound relies on the results of \cite{vokes2022hirarchical,behrstock2021quasiflat}.
In addition, Mahan Mj \cite{mj2009interpolating} proved that the quasi-flat rank of the complexity-$\xi$ graph is equal to the maximum number of pairwise disjoint subsurfaces whose complexities are at least $\xi+1$.
From this result, we obtain another proof of \Cref{theorem:B} via \Cref{theorem:A}.
Note that we cannot directly apply the results of Vokes \cite{vokes2022hirarchical} to the complexity-$\xi$ graph, since it does not satisfy the condition of a twist-free graph of multicurves.\vspace{2mm}

The papers \cite{vokes2022hirarchical,russel2022thickness} provide a classification of twist-free multicurve graphs into the hyperbolic, relatively hyperbolic, and thick cases.
Using this result, we obtain a classification for $k$-multicurve graphs.

\begin{corollarya}\label{corollary:criteria-hyperbolicity}
    Let $\Sigma=\Sigma_{g,b}$ be a connected, compact, and orientable surface of genus $g$ with $b$ boundary components, and assume that $3g-3+b$ is at least $2$.
    Let $k$ be an integer with $1\leq k\leq 3g-3+b$.
    Then the follwing conditions determine whether the $k$-multicurve graph $\multicurve{k}(\Sigma)$ is hyperbolic, relatively hyperbolic, or thick:
    \begin{enumerate}
        \item \textbf{Hyperbolic case.} 
        The $k$-multicurve graph $\multicurve{k}(\Sigma_{g,b})$ is hyperbolic if and only if $m(g,b,k)=1$.
        \item \textbf{Relatively hyperbolic case.} The $k$-multicurve graph $\multicurve{k}(\Sigma_{g,b})$ is relatively hyperbolic if and only if 
        \begin{itemize}
            \item $g$ is even, $b$ is even at least $2$, and $k=(3g+b)/2$,
            \item $g$ is even, $b=0$, and $k\in\{3g/2,(3g+2)/2\}$,
            \item $g$ is odd, $b\in\{0,2\}$, and $k=(3g+3)/2$, or 
            \item $g$ is odd, $b$ is odd at least $3$, and $k=(3g+b)/2$.
        \end{itemize}
        \item \textbf{Thick case.} The $k$-multicurve graph $\multicurve{k}(\Sigma_{g,b})$ is thick if and only if neither (1) nor (2) holds.
    \end{enumerate} 
\end{corollarya}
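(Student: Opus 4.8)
The plan is to feed the classification of twist-free graphs of multicurves of \cite{vokes2022hirarchical,russel2022thickness} directly into $\multicurve{k}(\Sigma)$ and translate each of the three regimes into arithmetic conditions on $g$, $b$ and $k$. We have already seen that $\multicurve{k}(\Sigma)$ is a twist-free graph of multicurves (and the hypothesis $3g-3+b\geq 2$ keeps it within the scope of those results), hence it is a hierarchically hyperbolic space and exactly one of the following holds: (i) any two witnesses for $\multicurve{k}(\Sigma)$ intersect, and then $\multicurve{k}(\Sigma)$ is hyperbolic; (ii) there exist disjoint witnesses but the orthogonality is \emph{isolated} in the sense of Russell --- informally, every family of pairwise disjoint witnesses is confined to a controlled collection of subsurfaces whose associated product regions do not interact --- and then $\multicurve{k}(\Sigma)$ is relatively hyperbolic; (iii) otherwise $\multicurve{k}(\Sigma)$ is thick of order $1$. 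Since these cases are mutually exclusive and exhaustive, it suffices to characterise (i) and (ii).

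Item (1) of the corollary is then immediate. By the hyperbolicity criterion of \cite{vokes2022hirarchical}, $\multicurve{k}(\Sigma)$ is hyperbolic if and only if no two witnesses are disjoint, that is, the maximal number of pairwise disjoint witnesses for $\multicurve{k}(\Sigma)$ equals $1$. In the proof of \Cref{theorem:B} this number is computed to be $m(g,b,k)$ --- and for hierarchically hyperbolic spaces it coincides with the quasi-flat rank by \cite{behrstock2021quasiflat} --- so $\multicurve{k}(\Sigma)$ is hyperbolic exactly when $m(g,b,k)=1$. Granting item (2), item (3) follows at once, since being thick is, by the trichotomy, precisely the remaining case.

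The content of the corollary lies in item (2). Here I would first extract, from the analysis underlying \Cref{theorem:B}, the description of the witnesses of $\multicurve{k}(\Sigma)$: a subsurface $W\subseteq\Sigma$ is a witness exactly when $\Sigma\setminus W$ cannot carry a $k$-component multicurve, which translates into a lower bound on the topological complexity of $W$ in terms of $g$, $b$ and $k$. Feeding this into the combinatorics of cutting $\Sigma_{g,b}$ along a multicurve --- with the usual Euler characteristic and genus bookkeeping for the complementary pieces --- one obtains the possible shapes of a maximal family of pairwise disjoint witnesses and of the product regions they span. The heart of the argument is then to decide when Russell's isolated-orthogonality condition holds for this collection of product regions; carrying out the resulting Diophantine analysis on the topological types $(g_i,b_i)$ of the pieces, together with the genus-gluing relations, is what forces $\Sigma$ and $k$ into one of the four families of item (2), with the case division governed by the parities of $g$ and $b$ and by the degenerate behaviour when $b\in\{0,2\}$.

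The main obstacle is precisely this last step: separating relative hyperbolicity from thickness is considerably more delicate than counting pairwise disjoint witnesses. In the listed cases one must exhibit isolating data meeting Russell's axioms and identify the peripheral product regions they determine, whereas in every remaining non-hyperbolic case one must produce enough extra structure --- a further homeomorphism type of extremal family of disjoint witnesses, a witness straddling a separating multicurve, or a chain of elementary moves joining two product regions --- to defeat isolation and thereby establish thickness of order $1$. I expect this analysis to run along the same parity lines as the statement, with a short list of genuinely exceptional small surfaces ($b\in\{0,2\}$, small genus) checked directly.
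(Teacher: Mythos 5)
The overall architecture of your proposal --- apply \Cref{theorem:classification-hyperbolic-relhyperbolic-thick} (Vokes / Russell--Vokes) to the twist-free graph $\multicurve{k}(\Sigma)$, read off item (1) from the number $m(g,b,k)$ of pairwise disjoint witnesses, get item (3) for free from the trichotomy, and sweat over item (2) --- is exactly the paper's. Item (1) is handled correctly: hyperbolicity is equivalent to the absence of two disjoint witnesses, which by \Cref{proposition:witness-for-k-multicurve} and \Cref{theorem:MaximumNumberForSubsurfaces} is $m(g,b,k)=1$.

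The gap is in item (2), and it is the statement of the criterion itself, not merely the bookkeeping. You appeal to ``Russell's isolated-orthogonality condition'' in its general HHS form, glossing it as the assertion that product regions spanned by disjoint witnesses ``do not interact,'' and your plan for the thick side is to exhibit extra structure (chains of elementary moves between product regions, extremal families of several homeomorphism types, etc.). But the result that makes this tractable --- and the one the paper actually uses --- is the \emph{specialization} of Russell's criterion to twist-free graphs of multicurves, recorded as \cite[Theorem 2.25]{russel2022thickness} and quoted here as \Cref{theorem:classification-hyperbolic-relhyperbolic-thick}: relative hyperbolicity holds iff there is some pair of disjoint witnesses \emph{and} whenever connected, co-connected witnesses $Y,Z$ are disjoint one has $\Sigma\setminus Y=Z$ up to isotopy. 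That second clause is a purely topological condition on complementary subsurfaces; it replaces the abstract ``no interaction of product regions'' by the checkable question of whether two disjoint co-connected witnesses must fill $\Sigma$. Without this reformulation the ``Diophantine analysis'' you envisage has nothing concrete to bite on, and the moves you propose for defeating isolation are not the right shape of counterexample. What the paper actually does, parity class by parity class, is (a) use \Cref{lemma:superadditivity-for-complexity} and mod-$3$ constraints on complexity to force boundary-component counts that pin down $\xi(Y^c)$ and hence $Y^c=Z$ in the listed cases, and (b) in all lower $\xi$ (equivalently higher $k$) cases exhibit disjoint co-connected witnesses $Y,Z$ that glue onto a pair of pants, four-holed sphere, or twice-holed torus inside $\Sigma$, so that $Y\cup Z\neq\Sigma$ and the second clause fails, yielding thickness. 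You should replace your invocation of abstract isolated orthogonality by this concrete criterion before attempting the case analysis; once you do, the rest of your plan lines up with the paper's proof.
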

Using \Cref{theorem:A} and Corollary \ref{corollary:criteria-hyperbolicity}, we verify Conjecture 1 of \cite{mj2009interpolating}.
Mahan Mj's motivation for introducing the complexity-$\xi$ graph is that it provides a quasi-isometric model for the coned-off Cayley graph $\Gamma(\Sigma,\xi)$ of the mapping class group $\Mod(\Sigma)$, where the coning is taken over left cosets of mapping class subgroups associated to subsurfaces of complexity at most $\xi$ \cite{mj2009interpolating}.
Consequently, the $(\xi_0-\xi)$-multicurve graph also provides a quasi-isometric model for this coned-off Cayley graph.

As for the related work, Hamenst\"{a}dt \cite{hamenstadt2014nonseparating} introduced the \textit{nonseparating $k$-multicurve graph $\mathcal{NC}^{[k]}(\Sigma)$} and showed that $\mathcal{NC}^{[k]}(\Sigma)$ is hyperbolic when $k<g/2+1$.
Furthermore, Russel and Vokes \cite{russel2022thickness} classified the large-scale geometry of $\mathcal{NC}^{[k]}(\Sigma)$ into the hyperbolic, relatively hyperbolic, and thick cases.

\subsection*{Acknowledgement}
The authors would like to thank Hidetoshi Masai. 
This work started from his one question.
The third author is grateful to Mahan Mj for answering questions on interpolating graphs.
The second and third authors also would like to thank Nariya Kawazumi for valuable discussions and encouragement.
The first author is supported by JSPS KAKENHI Grant Number 21K13791.
The third author is supported by JSPS KAKENHI Grant Number	25KJ0069.

\section{Preliminaries}

\subsection{Curves, multicurves, and subsurfaces}

Let $\Sigma=\Sigma_{g,b}$ be a connected, compact, and orientable surface of genus $g$ with $b$ boundary components.
A simple closed curve on $\Sigma$ is \textit{essential} if it is not homotopic to a point or to a boundary component of $\Sigma$.
In this paper, we simply refer to the isotopy class of an essential simple closed curve as a \textit{curve}.
A \textit{multicurve} on $\Sigma$ is a set of pairwise disjoint curves on $\Sigma$.
A multicurve is called a \textit{$k$-multicurve} if the number of components of the multicurve is $k$.
A subsurface $S\subset \Sigma$ is \textit{essential} if 
each boundary component of $S$ is an essential curve on $\Sigma$ or is homotopic to a boundary component of $\Sigma$.
Throughout this paper, a \textit{subsurface} refers to the isotopy class of an subsurface.

For an essential subsurface $S\subset \Sigma$, we define the \textit{complexity} $\xi(S)$ as the maximal number of pairwise disjoint essential curves on $S$.
If $S$ has connected components $S_1,\ldots,S_N$, the complexity of $S$ is equal to $\sum_{i=1}^N \xi(S_i)$.
By an Euler characteristic argument, we find $\xi(\Sigma)=3g-3+b$ and a $\xi(\Sigma)$-multicurve gives a pants decomposition of $\Sigma$.
We often refer to a $\xi(\Sigma)$-multicurve as a \textit{pants decomposition}. \vspace{2mm}

Let $S\subset \Sigma$ be an essential subsurface with positive complexity.
We here recall the definition of the \textit{subsurface projection} $\pi_{S}\colon \multicurve{k}(\Sigma) \to 2^{\mathcal{C}(S)}$, which is introduced in \cite{masur1999complex,masur2000complex} (see also \cite{vokes2022hirarchical}).
First, we define $\pi_{S}(\gamma) \in 2^{\mathcal{C}(S)}$ for a single curve $\gamma \in \mathcal{C}(\Sigma)$.
We suppose that $\gamma$ is in minimal position with $\partial S$, i.e., the number of connected components $\gamma\cap S$ and the intersection number $i(\gamma,\partial S)$ are respectively minimized.
If $\gamma \subset S$, then we define $\pi_{S}(\gamma)=\{\gamma\}$.
If $\gamma$ is disjoint from $S$, then we define $\pi_{S}(\gamma)=\varnothing$.
Otherwise, we define $\pi_S(\gamma)\in 2^{\mathcal{C}(S)}$ to be the union, over all arc components $c$ of $\gamma\cap S$, of the set of boundary curves of a closed regular neighborhood of $c\cup \partial S$.
For a $k$-multicurve $\alpha=\{\alpha^1,\ldots,\alpha^k\}\in\multicurve{k}(\Sigma)$, we define $\pi_S(\alpha) \in 2^{\mathcal{C}(S)}$ to be the union of $\pi_S(\alpha^i)$ over $1\leq i\leq k$.

\subsection{Metric geometry}

Let $(X,d_X), (Y,d_Y)$ be metric spaces.
For $K\geq 1$ and $L\geq0$, a map $f\colon X\to Y$ is a \textit{$(K,L)$-quasi-isometric embedding} if for any $x_1,x_2\in X$,
\begin{equation*}
    \frac1{K} d_X(x_1,x_2) -L \leq d_Y(f(x_1),f(x_2)) \leq Kd_X(x_1,x_2) +L.
\end{equation*}
The map $f\colon X\to Y$ is a \textit{quasi-isometric embedding} if it is a $(K,L)$-quasi-isometric embedding for some $K\geq 1,L>0$.
A map $f\colon X\to Y$ is \textit{$C$-quasi-dense} for $C>0$ if the $C$-neighborhood of $f(X)$ contains all of $Y$.
A quasi-isometric embedding $f\colon X\to Y$ is a \textit{quasi-isometry} if $f$ is $C$-quasi-dense for some $C>0$.
Two metric spaces $X$ and $Y$ are said to be \textit{quasi-isometric} if there exists a quasi-isometry between them.

The \textit{quasi-flat rank} of a metric space $X$ is the maximal dimension $n$ of a \textit{quasi-flat}, i.e., a quasi-isometric embedding $\mathbb{Z}^n\to X$. \vspace{2mm}

Following \cite{farb1998relative,brock2006curvature}, we recall the notion of relative hyperbolicity.
Let $X$ be a geodesic metric space and $\mathcal{H}=\{H_\alpha\}_{\alpha\in A}$ be a family of connected subsets of $X$ indexed by $\alpha\in A$.
For each $\alpha\in A$, we introduce a new point $v_\alpha$ and connect every point of $H_\alpha$ to $v_\alpha$ by an edge of length $1/2$.
Let $\hat{X}$ denote the resulting set, equipped with the path metric induced by these edges.
The distance function is denoted by $d_e$.
The resulting metric space $(\hat{X},d_e)$ is called the \textit{electric metric space} (or the \textit{coned-off metric space}) along $\mathcal{H}$.
If the coned-off metric space $\hat{X}$ is a hyperbolic metric space, the metric space $X$ is said to be \textit{weakly hyperbolic relative to $\mathcal{H}$}. 
Furthermore, if $X$ is weakly hyperbolic relative to $\mathcal{H}$ and the pair $(X,\mathcal{H})$ satisfies the \textit{bounded region penetration property} (see \cite{farb1998relative,brock2006curvature} for the definition), then $X$ is said to be \textit{(strongly) hyperbolic relative to $\mathcal{H}$} (see \cite{brock2008coarse,behrstock2009thick,sisto2012metric}).
For a geodesic metric space $X$, we simply say that $X$ is relatively hyperbolic if there exists a family $\mathcal{H}$ of subsets of $X$ such that $X$ is hyperbolic relative to $\mathcal{H}$.

The notion of \textit{thickness} for metric spaces is introduced by Behrstock, Dru\c{t}u and Mosher \cite{behrstock2009thick} (see also \cite[Definition 2.19]{russel2022thickness}).
Thickness is a geometric obstruction to a metric space being relatively hyperbolic \cite{behrstock2009thick}.

\subsection{Interpolating graphs}

In \cite{erlandsson2017multicurve}, Erlandsson and Fanoni introduce the notion of $k$-multicurve graph and show that the automorphism group of the $k$-multicurve graph is the extended mapping class group.

\begin{definition}\label{definition:k-multicurve-graph}
    For each integer $k$ with $1\leq k\leq 3g-3+b$, we define the \textit{$k$-multicurve graph} $\multicurve{k}(\Sigma)$ as follows:
    \begin{itemize}
        \item The vertices of $\multicurve{k}(\Sigma)$ are all $k$-multicurves on $\Sigma$.
        \item Two vertices $\alpha,\beta \in \multicurve{k}(\Sigma)$ are joined by an edge if $\nu\coloneqq \alpha\cap \beta$ is a $(k-1)$-multicurve on $\Sigma$ and single curves $\alpha\setminus \nu$ and $\beta\setminus \nu$ are disjoint when $k\leq 3g-3+b$, or intersect minimally on $\Sigma\setminus \nu$ when $k=3g-3+b$.
    \end{itemize}
\end{definition}

From this definition, we see that $\multicurve{1}(\Sigma)$ is the curve $\mathcal{C}(\Sigma)$ and $\multicurve{3g-3+b}(\Sigma)$ is the pants graph $\mathcal{P}(\Sigma)$. 
We equip the $k$-multicurve graph with the combinatorial metric $d_{\multicurve{k}}$ given by setting each edge to have length $1$.\vspace{2mm}

On the other hand, Mahan Mj introduced other graphs that interpolate between the curve graph and the pants graph \cite{mj2009interpolating}.

\begin{definition}
    Let $\alpha$ be a pants decomposition on $\Sigma$.
    An essential subsurface $S\subset \Sigma$ is \textit{compatible} with $\alpha$ if each curve in $\partial S$ is in $\alpha$ or homotopic to a boundary of $\Sigma$.
\end{definition}

\begin{definition}\label{definition:complexity-xi-graph}
    For each integer $\xi$ with $-1\leq \xi \leq 3g-4+b$, we define the \textit{complexity-$\xi$ graph} (also called \textit{interpolating graph}) $\interpolate{\xi}(\Sigma)$ as follows:
    \begin{itemize}
        \item The vertices of $\interpolate{\xi}(\Sigma)$ are all pants decompositions of $\Sigma$.
        \item Two pants curves $\alpha,\beta$ are joined by an edge if either
        \begin{enumerate}[(i)]
            \item they are joined by an edge in the pants graph, or
            \item there exists an essential subsurface $S\subset \Sigma$ of complexity at most $\xi$ that is compatible with both $\alpha$ and $\beta$, and on whose complement, $\alpha$ and $\beta$ coincide.
        \end{enumerate}
    \end{itemize}
\end{definition}

We equip the interpolating graph $\interpolate{\xi}(\Sigma)$ with the combinatorial metric $d_{\interpolate{\xi}}$ obtained by assigning length $1$ to each edge.
When $\xi=-1$ or $0$, the condition (ii) implies $\alpha$ and $\beta$ coincide as pants decompositions.
Therefore, $\interpolate{-1}(\Sigma)$ and $\interpolate{0}(\Sigma)$ coincide with the pants graph $\mathcal{P}(\Sigma)$.
On the other hand, when $\xi=\xi_0-1$ ($\xi_0\coloneqq 3g-3+b$), the interpolating graph $\interpolate{\xi_0-1}(\Sigma)$ is apparently different from the curve graph $\mathcal{C}(\Sigma)$.
However, as pointed out in \cite{mj2009interpolating}, the interpolating  graph $\interpolate{\xi_0-1}(\Sigma)$ is quasi-isometric to the curve graph $\mathcal{C}(\Sigma)$.

\subsection{Twist-free multicurve graphs}

Vokes introduced the notion of twist-free multicurve graph \cite{vokes2022hirarchical}.
Let $\Sigma$ be a connected, compact, and orientable surface.
Let $\mathcal{G}(\Sigma)$ be a graph of multicurves on $\Sigma$, i.e., a nonempty graph whose vertices are multicurves on $\Sigma$.
We equip $\mathcal{G}(\Sigma)$ with the combinatorial metric $d_{\mathcal{G}}$ defined by setting the length of each edge to be $1$.
We often abuse notation and use the same symbol to denote both a graph and its vertex set.

An connected essential subsurface $S\subset \Sigma$ is a \textit{witness} for a graph $\mathcal{G}(\Sigma)$ of multicurves if every vertex of $\mathcal{G}(\Sigma)$ has an essential intersection with $S$, i.e., for each $\alpha=\{\alpha^1,\ldots,\alpha^m\}\in\mathcal{G}(\Sigma)$, the union $\bigcup_{j=1}^m c^j$ of any representative $c^j$ of each isotopy class $\alpha^j$ intersects with $S$.

\begin{definition}\label{definition:twistfree-multicurve-graph}
    A graph $\mathcal{G}(\Sigma)$ of multicurves is \textit{twist-free} (or \textit{hierarchical}) if it satisfies the following conditions.
    \begin{enumerate}
        \item The graph $\mathcal{G}(\Sigma)$ is connected.
        \item The action of the mapping class group on the vertices of $\mathcal{G}(\Sigma)$ induces automorphisms of $\mathcal{G}(\Sigma)$.
        \item If two vertices $\alpha,\beta\in \mathcal{G}(\Sigma)$ are joined by an edge, the intersection number $i(\alpha,\beta)$ is bounded by a uniform constant $R$.
        \item The set of witnesses for $\mathcal{G}(\Sigma)$ contains no annuli.
    \end{enumerate}
\end{definition}

In the paper \cite{vokes2022hirarchical}, Vokes proves the following theorem.

\begin{theorem}[{\cite[Theorem 1.1]{vokes2022hirarchical}}]
    Let $\mathcal{G}(\Sigma)$ be a twist-free graph of multicurves associated to $\Sigma$.
    Let $\mathfrak{S}(\mathcal{G}(\Sigma))$ be the set of all disjoint unions of disjoint witnesses.
    Then $\mathcal{G}(\Sigma)$ is a hierarchically hyperbolic space with respect to subsurface projections to the curve graphs of subsurfaces in $\mathfrak{S}(\mathcal{G}(\Sigma))$.
\end{theorem}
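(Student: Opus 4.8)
The plan is to verify directly the defining axioms of a hierarchically hyperbolic space in the sense of Behrstock--Hagen--Sisto \cite{behrstock2017hierarchyI,behrstock2019hierarchyII}. The index set is $\mathfrak{S}\coloneqq\mathfrak{S}(\mathcal{G}(\Sigma))$, consisting of the essential subsurfaces each of whose components is a witness, ordered so that $U\sqsubseteq W$ means $U$ is nested in $W$, $U\perp W$ means $U$ and $W$ are disjoint, and $U\pitchfork W$ means they overlap. To each $W\in\mathfrak{S}$ we attach the curve graph $\mathcal{C}(W)$ --- the join of the curve graphs of the components of $W$, which is uniformly bounded once $W$ is disconnected and is uniformly hyperbolic in all cases by Masur--Minsky --- and $\pi_W\colon\mathcal{G}(\Sigma)\to 2^{\mathcal{C}(W)}$ is the subsurface projection recalled above, taken componentwise. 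First I would record the bookkeeping: $\Sigma$ is itself a witness, since every vertex of $\mathcal{G}(\Sigma)$ is a nonempty essential multicurve, so $\Sigma$ is the $\sqsubseteq$-maximal element; and since hypothesis (4) forbids annular witnesses, every element of $\mathfrak{S}$ has positive complexity, so every $\sqsubseteq$-chain has length at most $\xi(\Sigma)=3g-3+b$, yielding finite complexity and the bounded-domain dichotomy.

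The next step is the bulk of the axioms, all of which are local consequences of the Masur--Minsky subsurface-projection machinery \cite{masur1999complex,masur2000complex}. The projections $\pi_W$ are coarsely Lipschitz because, by hypothesis (3), two vertices joined by an edge of $\mathcal{G}(\Sigma)$ have intersection number at most a uniform $R$, and multicurves of bounded intersection number have uniformly close subsurface projections. The consistency (Behrstock) inequality for transverse domains, the bounded geodesic image theorem, and the large-links axiom for the spaces $\mathcal{C}(W)$ are precisely the corresponding Masur--Minsky statements, which hold for arbitrary essential subsurfaces and hence for the elements of $\mathfrak{S}$; the relative projections are $\rho^U_W\coloneqq\pi_W(\partial U)$ whenever $U\sqsubsetneq W$ or $U\pitchfork W$. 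The orthogonality and container axioms are where the particular choice of $\mathfrak{S}$ is used: given $U\in\mathfrak{S}$ nested in a witness $W$, the required container is the union of those components of $W\setminus U$ that are themselves witnesses, which again lies in $\mathfrak{S}$. Partial realization --- simultaneously prescribing curve-graph coordinates on a family of pairwise disjoint witnesses --- follows because disjoint essential subsurfaces can be filled in by disjoint multicurves independently, after which one adjusts the result to an actual vertex of $\mathcal{G}(\Sigma)$. The single point that genuinely uses the \emph{witness} hypothesis, rather than holding for all subsurfaces, is coarse surjectivity of $\pi_W$: every vertex of $\mathcal{G}(\Sigma)$ essentially meets $W$, so $\pi_W$ is nonempty, and coarse ontoness then follows from the mapping class group symmetry in hypothesis (2).

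The hard part will be the remaining axiom, \emph{uniqueness}: if two vertices $\alpha,\beta\in\mathcal{G}(\Sigma)$ satisfy $d_W(\pi_W(\alpha),\pi_W(\beta))\le C$ for every witness $W$, then $d_{\mathcal{G}}(\alpha,\beta)$ is bounded in terms of $C$. This requires building a hierarchy path in $\mathcal{G}(\Sigma)$ from $\alpha$ to $\beta$ whose length is controlled by the projection data: concretely, one repeatedly performs an elementary move supported on whichever witness currently carries a large projection, using the connectivity hypothesis (1) to stay inside $\mathcal{G}(\Sigma)$ and a Masur--Minsky-style resolution argument to bound the number of moves. Here hypothesis (4) plays a structural role: because no witness is an annulus, the resulting distance formula ranges only over non-annular domains, which is exactly the information a graph of \emph{multicurves} (as opposed to markings) can detect, so no tracking of Dehn twists --- and hence no passage to the marking complex --- is needed. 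Once uniqueness is established, all of the HHS axioms hold, and the accompanying distance formula
\begin{equation*}
    d_{\mathcal{G}}(\alpha,\beta)\ \asymp\ \sum_{W}\bigl[\, d_W(\pi_W(\alpha),\pi_W(\beta))\,\bigr]_s,
\end{equation*}
the sum taken over all witnesses $W$ and $[x]_s$ denoting $x$ if $x\ge s$ and $0$ otherwise, follows from the general HHS theory and is what the subsequent quasi-flat-rank computations rest on.
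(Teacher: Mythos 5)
This is Theorem 1.1 of Vokes \cite{vokes2022hirarchical}; the paper under review quotes it as background and gives no proof, so there is no in-paper argument for your sketch to match or diverge from. As a reconstruction of Vokes's proof strategy your outline is essentially on target: a direct verification of the Behrstock--Hagen--Sisto axioms with $\mathfrak{S}$ as the index set, the joins of component curve graphs as the associated hyperbolic spaces, component-wise subsurface projections, the Masur--Minsky machinery supplying consistency, bounded geodesic image, and large links, and the four twist-freeness hypotheses feeding in roughly where you attach them. Two glosses deserve a caution. Coarse surjectivity of $\pi_W$ does not follow from the mapping class group action by itself: one must actually produce, for a given curve $c$ essentially contained in a witness $W$, a vertex of $\mathcal{G}(\Sigma)$ whose projection to $\mathcal{C}(W)$ is near $c$, and for an arbitrary graph of multicurves that takes an argument. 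And the phrase ``adjust the result to an actual vertex of $\mathcal{G}(\Sigma)$'' in your realization step, together with the move-by-move resolution you describe for uniqueness, is where essentially all of the genuine work in Vokes's paper happens; it uses the connectivity and equivariance hypotheses in a more delicate way than a one-line appeal suggests. As a sketch these omissions are forgivable, but they should be flagged as the real content rather than as routine.
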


For the definition of a hierarchically hyperbolic space, see \cite{behrstock2017hierarchyI,vokes2022hirarchical}.
The hierarchical hyperbolicity of $\mathcal{G}(\Sigma)$ leads to many geometric properties of $\mathcal{G}(\Sigma)$.
We recall some results from \cite{vokes2022hirarchical}.

\begin{theorem}[Distance Formula, {\cite[Corollary 1.2]{vokes2022hirarchical}}]\label{theorem:distance-formula}
    Let $\mathcal{G}(\Sigma)$ be a twist-free graph of multicurves on $\Sigma$ and let $\mathfrak{X}(\mathcal{G}(\Sigma))$ be the set of all connected witnesses for $\mathcal{G}(\Sigma)$.
    Then there exists a constant $C_0>0$ such that, for any $C\geq C_0$, there exist $K_1 \geq 1, K_2>0$ such that, for any vertices $\alpha,\beta \in \mathcal{G}(\Sigma)$, we have 
    \begin{equation*}
        d_{\mathcal{G}}(\alpha,\beta) \asymp_{K_1,K_2} \sum_{S\in\mathfrak{X}} [d_{\mathcal{C}(S)}(\pi_S(\alpha),\pi_S(\beta))]_C.
    \end{equation*}
\end{theorem}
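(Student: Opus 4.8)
The plan is to derive the Distance Formula from the general distance formula for hierarchically hyperbolic spaces, fed with the hierarchically hyperbolic structure on $\mathcal{G}(\Sigma)$ supplied by the preceding theorem (Vokes' Theorem~1.1). Recall that this structure has index set $\mathfrak{S}(\mathcal{G}(\Sigma))$ consisting of all formal disjoint unions of pairwise disjoint witnesses for $\mathcal{G}(\Sigma)$, that the hyperbolic space associated to a connected domain $S\in\mathfrak{X}$ is the curve graph $\mathcal{C}(S)$, and that the associated projection $\mathcal{G}(\Sigma)\to 2^{\mathcal{C}(S)}$ is, up to uniformly bounded error, the subsurface projection $\pi_S$ defined in Section~2. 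I would first record the standard fact that $\pi_S(\alpha)$ is a subset of $\mathcal{C}(S)$ whose diameter is bounded independently of the vertex $\alpha$ and of $S$ (here one uses that the components of the multicurve $\alpha$ are pairwise disjoint and that their number is bounded by $\xi(\Sigma)$); this is what makes the quantities $d_{\mathcal{C}(S)}(\pi_S(\alpha),\pi_S(\beta))$ well-defined up to a uniform additive error.

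Next I would invoke the distance formula of Behrstock--Hagen--Sisto \cite{behrstock2017hierarchyI,behrstock2019hierarchyII}: for the hierarchically hyperbolic space $(\mathcal{G}(\Sigma),\mathfrak{S}(\mathcal{G}(\Sigma)))$ there is a threshold $C_0>0$ such that for every $C\geq C_0$ there are constants $K_1\geq 1$, $K_2>0$ with
\[
    d_{\mathcal{G}}(\alpha,\beta) \asymp_{K_1,K_2} \sum_{W\in\mathfrak{S}(\mathcal{G}(\Sigma))} [d_{\mathcal{C}(W)}(\pi_W(\alpha),\pi_W(\beta))]_C .
\]
The remaining task is to show that, after possibly enlarging $C_0$, the sum on the right may be restricted to the connected witnesses $\mathfrak{X}$ rather than ranging over all of $\mathfrak{S}(\mathcal{G}(\Sigma))$. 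I would split $\mathfrak{S}(\mathcal{G}(\Sigma)) = \mathfrak{X}\sqcup\mathfrak{S}'$, where $\mathfrak{S}'$ consists of the disjoint unions of at least two witnesses, and treat the two parts separately.

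The heart of the argument --- and the step I expect to require the most care --- is showing that every term indexed by $W\in\mathfrak{S}'$ vanishes for $C$ large. In Vokes' construction the hyperbolic space $\mathcal{C}(W)$ associated to such an ``orthogonality container'' $W = S_1\sqcup\cdots\sqcup S_n$ with $n\geq 2$ has uniformly bounded diameter --- indeed it can be taken to be a single point --- precisely because its role is to record, through the nesting $S_i\sqsubseteq W$, the product structure coming from the pairwise disjoint (hence orthogonal) subsurfaces $S_i$, without itself contributing a genuine coordinate. Thus $d_{\mathcal{C}(W)}(\pi_W(\alpha),\pi_W(\beta))\leq D$ for a constant $D$ independent of $W,\alpha,\beta$, and enlarging $C_0$ so that $C_0>D$ forces $[d_{\mathcal{C}(W)}(\pi_W(\alpha),\pi_W(\beta))]_C = 0$ for all $W\in\mathfrak{S}'$ and all $C\geq C_0$. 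Hence for such $C$ the sum collapses to $\sum_{S\in\mathfrak{X}} [d_{\mathcal{C}(S)}(\pi_S(\alpha),\pi_S(\beta))]_C$, and combining this with the displayed coarse equality gives the statement. What then remains is bookkeeping: one checks that replacing the intrinsic hierarchically hyperbolic projections to $\mathcal{C}(S)$ by the combinatorial subsurface projections $\pi_S$ of Section~2 alters each threshold term by a uniformly bounded amount, so the constants $K_1$, $K_2$ (and $C_0$) may be adjusted accordingly, and that the whole discussion is unaffected by the coarseness of $\pi_S$ noted in the first paragraph.
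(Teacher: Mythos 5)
This statement is quoted verbatim from Vokes (Corollary~1.2 of the cited reference) and appears in the paper's Preliminaries without any proof; the paper supplies no argument of its own to compare against. So I can only assess your proposal on its own terms, as a reconstruction of the proof in the cited source.

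Your reconstruction is sound and essentially matches how the result is actually derived: one feeds Vokes' Theorem~1.1 --- which exhibits $\mathcal{G}(\Sigma)$ as an HHS with index set $\mathfrak{S}(\mathcal{G}(\Sigma))$ consisting of disjoint unions of disjoint witnesses, and with $\mathcal{C}(W)$ as the associated hyperbolic space for each $W$ --- into the Behrstock--Hagen--Sisto distance formula, and then observes that the terms indexed by disconnected domains vanish once the threshold is raised. The one small inaccuracy is in how you justify that vanishing. You suggest the hyperbolic space attached to a disjoint union $W = S_1\sqcup\cdots\sqcup S_n$ ($n\geq 2$) ``can be taken to be a single point,'' as if this were a choice made in building the structure. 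In Vokes' construction it is simply the honest curve graph $\mathcal{C}(W)$ of the disconnected surface, and the bounded diameter is then an automatic fact: any two curves lying in different components of $W$ are disjoint, so $\mathcal{C}(W)$ has diameter at most $2$, independently of $W$. Taking $C_0>2$ therefore kills every such term, which is exactly the collapse to $\mathfrak{X}$ that you need. With that clarification, the rest of your bookkeeping --- the coarse well-definedness of $\pi_S$, the uniformly bounded difference between the combinatorial subsurface projection and the intrinsic HHS projection, and the freedom to absorb these errors into $K_1,K_2,C_0$ --- is standard and correct.
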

Here, $d_{\mathcal{C}(S)}(\pi_S(\alpha),\pi_S(\beta))=\mathrm{diam}_{\mathcal{C}(S)}(\pi_S(\alpha)\cup \pi_S(\beta))$ and $A\asymp_{K_1,K_2} B$ means 
    \begin{equation*}
        K_1^{-1}B-K_2 \leq A\leq K_1B+K_2
    \end{equation*}
and $[\cdot]_C$ is the cutoff function, namely, $[x]_C\coloneqq x$ if $x\geq C$, and $[x]_C\coloneqq 0$ otherwise.

\begin{theorem}[{\cite[Corollary 1.4]{vokes2022hirarchical}}]\label{theorem:upper-bound-quasiflat-rank}
    Let $\mathcal{G}(\Sigma)$ be a twist-free graph of multicurves on $\Sigma$ and let $\nu$ be the maximum of the number of pairwise disjoint witnesses for $\mathcal{G}(\Sigma)$.
    Then, $\nu$ is equal to the largest integer $n$ satisfying the following condition: there exists $K>0$ such that, for any $R>0$, one can take a $(K,K)$-quasi-isometric embedding $B_R^n\to \mathcal{G}(\Sigma)$, where $B_R^n\subset \R^n$ is a $n$-dimensional Euclidean ball of radius $R$.
\end{theorem}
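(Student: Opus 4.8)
The plan is to prove the two inequalities $\nu \le n^{\ast}$ and $n^{\ast}\le \nu$, where $n^{\ast}$ denotes the largest integer $n$ for which there is a $K>0$ admitting $(K,K)$-quasi-isometric embeddings $B^{n}_{R}\to\mathcal{G}(\Sigma)$ for every $R>0$; the set of such $n$ is downward closed, and the two bounds below show it is nonempty and finite, so $n^{\ast}$ is well defined. For $\nu\le n^{\ast}$ I would construct flats explicitly. Choose $\nu$ pairwise disjoint witnesses $W_{1},\dots,W_{\nu}$; by twist-freeness none of them is an annulus, and we may assume each has positive complexity, so each $\mathcal{C}(W_{i})$ is an unbounded Gromov hyperbolic geodesic space and contains, for every $R$, a geodesic segment $\gamma_{i}\colon[0,R]\to\mathcal{C}(W_{i})$. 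Fix a vertex $\alpha_{0}\in\mathcal{G}(\Sigma)$ intersecting every $W_{i}$ essentially. Since $W_{1}\sqcup\cdots\sqcup W_{\nu}$ belongs to $\mathfrak{S}(\mathcal{G}(\Sigma))$, for each lattice point $v=(v_{1},\dots,v_{\nu})$ with $0\le v_{i}\le R$ one can build a vertex $\alpha_{v}$ that agrees with $\alpha_{0}$ on the complement of $W_{1}\sqcup\cdots\sqcup W_{\nu}$ and has $\pi_{W_{i}}(\alpha_{v})$ uniformly close to $\gamma_{i}(v_{i})$ for every $i$. Feeding $\alpha_{v}$ and $\alpha_{w}$ into the Distance Formula (Theorem~\ref{theorem:distance-formula}) and using the bounded geodesic image theorem, the sum over connected witnesses is coarsely $\sum_{i}|v_{i}-w_{i}|$: the domains $W_{i}$ contribute $\approx\sum_{i}|v_{i}-w_{i}|$; domains nested in some $W_{i}$ contribute boundedly because the $\gamma_{i}$ are geodesics; and domains transverse to, or containing, some $W_{i}$ have projections coarsely independent of $v$, which is cleanest to see by working inside the standard product region of $W_{1}\sqcup\cdots\sqcup W_{\nu}$, a hierarchically quasiconvex subset that is coarsely a product of $\nu$ unbounded factors. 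This yields a uniform quasi-isometric embedding of $\{0,\dots,R\}^{\nu}$, hence of $B^{\nu}_{R}$, into $\mathcal{G}(\Sigma)$, so $\nu\le n^{\ast}$.

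For $n^{\ast}\le\nu$, suppose $f_{R}\colon B^{n}_{R}\to\mathcal{G}(\Sigma)$ are $(K,K)$-quasi-isometric embeddings for all $R>0$. Rescaling the metric on $\mathcal{G}(\Sigma)$ by $1/R$, the maps $x\mapsto f_{R}(Rx)$ become $(K,K/R)$-quasi-isometric embeddings of the fixed ball $B^{n}_{1}$; taking an ultralimit along a non-principal ultrafilter produces a $(K,0)$-bi-Lipschitz, in particular topological, embedding of $B^{n}_{1}$ into an asymptotic cone $\mathcal{G}_{\omega}$ of $\mathcal{G}(\Sigma)$. Hence $\mathcal{G}_{\omega}$ contains a topologically embedded open $n$-ball and so has covering dimension at least $n$. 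On the other hand, by Vokes' theorem that $\mathcal{G}(\Sigma)$ is a hierarchically hyperbolic space together with the structure theory of asymptotic cones of such spaces \cite{behrstock2017hierarchyI,behrstock2021quasiflat}, every asymptotic cone of $\mathcal{G}(\Sigma)$ has covering dimension equal to the rank of its hierarchically hyperbolic structure, namely the maximal number of pairwise orthogonal domains with unbounded associated spaces. In $\mathfrak{S}(\mathcal{G}(\Sigma))$ two domains are orthogonal precisely when the underlying subsurfaces are disjoint, and exactly the connected witnesses of positive complexity carry unbounded curve graphs (the disjoint unions of two or more witnesses serve only as containers and carry bounded associated spaces); hence this rank equals $\nu$. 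Therefore $n\le\nu$, and combined with the construction above we conclude $n^{\ast}=\nu$.

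I expect the upper bound to be the main obstacle. The Distance Formula makes the construction of arbitrarily large flats almost immediate, but on its own it does not rule out $(n+1)$-dimensional families of balls; the passage from the ``ball-by-ball'' hypothesis to a genuine statement about dimension is exactly what the asymptotic-cone argument supplies, and it forces one to import the nontrivial fact that asymptotic cones of hierarchically hyperbolic spaces have covering dimension equal to the rank \cite{behrstock2017hierarchyI,behrstock2021quasiflat}. A secondary technical point, in the lower bound, is checking that witnesses transverse to the chosen $W_{i}$ do not inflate distances; this is best handled by replacing the ad hoc choices of $\alpha_{v}$ with the standard product region machinery, so that the only domains detecting the motion are those nested in the $W_{i}$.
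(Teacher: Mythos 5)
This statement is not proved in the paper at all: it is imported verbatim from Vokes \cite[Corollary 1.4]{vokes2022hirarchical}, who deduces it from the rank theorem for hierarchically hyperbolic spaces of Behrstock--Hagen--Sisto \cite{behrstock2019hierarchyII,behrstock2021quasiflat}. Your argument reconstructs essentially that proof: the lower bound via product regions and the Distance Formula, and the upper bound via rescaling, ultralimits, and the control on the topological dimension of asymptotic cones of an HHS. Two caveats. First, in the upper bound you assert that asymptotic cones of an HHS have covering dimension \emph{equal} to the rank; what the literature actually provides (and all you need) is the one-sided bound that every \emph{locally compact} subset of such a cone has dimension at most the rank --- your bi-Lipschitz copy of $B_1^n$ is compact, so this suffices, but the equality as stated is unjustified and should be weakened. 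Second, in the lower bound the step ``build a vertex $\alpha_v$ that agrees with $\alpha_0$ on the complement of $W_1\sqcup\cdots\sqcup W_\nu$'' is not literally available for an arbitrary twist-free graph of multicurves, whose vertex set may be constrained (e.g.\ fixed number of components, nonseparating conditions); this must be routed through the realization/partial-realization theorem of the HHS structure, which is exactly what your closing remark about standard product regions amounts to, so the fix is the one you already indicate. Your identification of the rank with $\nu$ is correct for Vokes' structure: orthogonality is disjointness, connected witnesses of positive complexity carry unbounded curve graphs, and disconnected unions carry uniformly bounded ones. With these two points tightened the argument is sound and coincides with the proof in the source.
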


\begin{theorem}[{\cite[Corollary 1.5]{vokes2022hirarchical}}]\label{theorem:vokes-corollary1.5}
    Let $\mathcal{G}(\Sigma)$ be a twist-free graph of multicurves. 
    If there exists no pair of disjoint witnesses for $\mathcal{G}(\Sigma)$, then $\mathcal{G}(\Sigma)$ is Gromov hyperbolic.
\end{theorem}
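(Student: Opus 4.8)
The plan is to deduce Gromov hyperbolicity of $\mathcal{G}(\Sigma)$ from the hierarchically hyperbolic structure supplied by \cite[Theorem 1.1]{vokes2022hirarchical}, by showing that the hypothesis collapses the orthogonality relation of that structure. Recall from \cite{vokes2022hirarchical} that this structure has index set $\mathfrak{S}(\mathcal{G}(\Sigma))$, the set of all disjoint unions of pairwise disjoint witnesses; nesting is inclusion of subsurfaces, the hyperbolic space attached to a connected witness is its (uniformly hyperbolic) curve graph, and two domains are orthogonal exactly when the corresponding subsurfaces are disjoint.

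First I would observe that the hypothesis forces every element of $\mathfrak{S}(\mathcal{G}(\Sigma))$ to be a single connected witness: a disjoint union with two or more summands would exhibit a pair of disjoint witnesses. Hence no two elements of $\mathfrak{S}(\mathcal{G}(\Sigma))$ are orthogonal, i.e.\ the orthogonality relation is empty. By \Cref{theorem:upper-bound-quasiflat-rank} this already says that $\mathcal{G}(\Sigma)$ carries no uniform family of quasi-isometrically embedded $2$-dimensional balls, but that by itself does not give hyperbolicity; it is an input to, not the conclusion of, the next step.

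Next I would invoke the general principle that a hierarchically hyperbolic space with empty orthogonality relation is Gromov hyperbolic \cite{behrstock2017hierarchyI}. The concrete mechanism is the Distance Formula (\Cref{theorem:distance-formula}): it expresses $d_{\mathcal{G}}(\alpha,\beta)$ coarsely as a sum, over a family of pairwise-overlapping connected witnesses, of distances in their uniformly hyperbolic curve graphs; with no orthogonality there is no product behaviour obstructing hyperbolicity, and a thin-triangles / guessing-geodesics criterion, fed by the behaviour of subsurface projections along hierarchy paths, then produces a uniform hyperbolicity constant $\delta$ for $\mathcal{G}(\Sigma)$.

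I expect the last step to be the only real difficulty: pinning down, and if one wants a self-contained treatment reproving from the Distance Formula, the assertion that an HHS with trivial orthogonality is hyperbolic is where the genuine work sits. A minor technical point to watch is that the container domains demanded by the HHS axioms could a priori re-create orthogonal pairs; but under the hypothesis the domains are just the connected witnesses together with the ambient surface $\Sigma$, so no nontrivial containers are required and this does not occur.
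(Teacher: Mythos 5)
The paper does not actually prove this statement: it is quoted verbatim from Vokes (Corollary 1.5 of \cite{vokes2022hirarchical}) and used as a black box, so there is no internal proof to compare against. Your reconstruction is nevertheless faithful to how the result is established in the cited source: the hypothesis of no disjoint pair of witnesses forces every domain of the hierarchically hyperbolic structure from \cite[Theorem 1.1]{vokes2022hirarchical} to be a single connected witness and kills the orthogonality relation, and one then invokes the Behrstock--Hagen--Sisto criterion that a hierarchically hyperbolic space with no pair of orthogonal domains having unbounded associated hyperbolic spaces is Gromov hyperbolic. You correctly identify that this last criterion is the genuine content and that the Distance Formula alone does not yield it; since the statement under review is itself a citation, deferring that step to \cite{behrstock2017hierarchyI} is exactly what the paper (implicitly) does, and your proposal is correct and takes the same route.
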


In \cite{russel2022thickness}, a classification of twist-free graphs of multicurves into hyperbolic, relatively hyperbolic, or thick cases is given.
A subsurface $S\subset \Sigma$ is said to be \textit{co-connected} if the complement $\Sigma \setminus S$ is connected.

\begin{theorem}[{\cite[Theorem 2.25]{russel2022thickness}}]\label{theorem:classification-hyperbolic-relhyperbolic-thick}
    Let $\mathcal{G}(\Sigma)$ be a twist-free graph of multicurves.
    Then the following hold:
    \begin{enumerate}
        \item \textbf{Hyperbolic case.} The graph $\mathcal{G}(\Sigma)$ is hyperbolic if and only if it admits no pair of witnesses that are disjoint.
        \item \textbf{Relatively hyperbolic case.} The graph $\mathcal{G}(\Sigma)$ is relatively hyperbolic if and only if it admits a pair of  witnesses that are disjoint, and whenever connected witnesses $Z,W \subset \Sigma$ for $\mathcal{G}(\Sigma)$ are disjoint and co-connected, then we have $\Sigma\setminus Z=W$.
    \end{enumerate}
    In all cases other than (1) and (2), the graph $\mathcal{G}(\Sigma)$ is a thick metric space.
\end{theorem}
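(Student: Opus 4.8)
The plan is to read off the trichotomy from the hierarchically hyperbolic structure on $\mathcal{G}(\Sigma)$ supplied by \cite[Theorem 1.1]{vokes2022hirarchical}, combined with the general machinery relating the combinatorics of a hierarchically hyperbolic space (HHS) to its coarse geometry. In that structure the index set $\mathfrak{S}$ consists of the disjoint unions of pairwise disjoint witnesses for $\mathcal{G}(\Sigma)$, the hyperbolic space attached to a connected witness $S$ is $\mathcal{C}(S)$, and nesting, orthogonality and transversality of domains correspond respectively to subsurface containment, disjointness, and overlapping of the underlying subsurfaces. Under this dictionary a pair of disjoint witnesses is exactly a pair of orthogonal, non-nested domains, and the standard product regions of the HHS are coarsely $\prod_i \mathcal{C}(S_i)$ as $\{S_i\}$ runs over collections of pairwise disjoint witnesses. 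The three cases of the statement then correspond to three regimes of the orthogonality graph of $\mathfrak{S}$: empty, \emph{isolated}, and densely chained.

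\textbf{Part (1).} One implication is precisely \Cref{theorem:vokes-corollary1.5}: if no two witnesses are disjoint, then $\mathcal{G}(\Sigma)$ is Gromov hyperbolic. For the converse, let $Z,W$ be disjoint witnesses. Since the twist-free hypothesis forbids annular witnesses, each of $\mathcal{C}(Z),\mathcal{C}(W)$ is unbounded and carries a loxodromically acting element (a pseudo-Anosov, or a hyperbolic Farey element in the once-punctured torus or four-holed sphere case); choose such maps $\varphi_Z$ supported in $Z$ and $\varphi_W$ supported in $W$. Since $Z$ and $W$ are disjoint, $\varphi_Z$ and $\varphi_W$ commute, and along the $\Z^2$-orbit $(m,n)\mapsto\varphi_Z^m\varphi_W^n\cdot\alpha$ of a base vertex $\alpha$ the only connected witnesses with unbounded subsurface projection are $Z$ (with growth $\asymp|m|$) and $W$ (with growth $\asymp|n|$). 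The distance formula \Cref{theorem:distance-formula} then gives $d_{\mathcal{G}}(\alpha,\varphi_Z^m\varphi_W^n\cdot\alpha)\asymp|m|+|n|$, so this orbit is a quasi-isometrically embedded $\Z^2$ and $\mathcal{G}(\Sigma)$ is not hyperbolic.

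\textbf{Part (2) and the thick case.} The crux is to show that the displayed combinatorial condition is equivalent to \emph{isolated orthogonality} for $\mathcal{G}(\Sigma)$ in the Behrstock--Hagen--Sisto sense \cite{behrstock2019hierarchyII}: the existence of a family $\mathcal{I}\subset\mathfrak{S}$ of pairwise transverse domains such that every orthogonal pair is nested into a unique member of $\mathcal{I}$. Once this is established, the general criterion makes $\mathcal{G}(\Sigma)$ hyperbolic relative to the product regions $\{P_U:U\in\mathcal{I}\}$, while the failure of isolated orthogonality forces a network of unbounded product regions, pairwise chained through unbounded coarse intersections and coarsely covering $\mathcal{G}(\Sigma)$, which is thickness. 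Concretely: if the condition holds, one takes $\mathcal{I}=\{\,Z\sqcup(\Sigma\setminus Z)\,\}$ over the co-connected witnesses $Z$ whose complement is again a witness; the condition ensures that every pair of disjoint witnesses refines into one such complementary pair (giving uniqueness of the enveloping domain, using that any subsurface containing a witness is a witness) and that distinct such $Z$ have $\partial Z$ mutually intersecting (giving pairwise transversality, hence bounded coarse intersection of the $P_U$). If the condition fails there are disjoint co-connected witnesses $Z,W$ with $W\subsetneq\Sigma\setminus Z$; then $\mathcal{C}(Z)$ is a common unbounded factor of two product regions that are not jointly nested, and, using that any two witnesses inside the nontrivial subsurface $\Sigma\setminus Z$ can themselves be chained, one assembles the required thick network. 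Together with Part (1) this yields the trichotomy.

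The main obstacle is exactly this last equivalence: translating the purely topological hypothesis on witnesses (disjoint, connected, co-connected, complementary) into the HHS-theoretic statement that orthogonality is isolated with uniqueness of the enveloping domain and uniform transversality of the members of $\mathcal{I}$; and, conversely, organizing the product regions into a genuine thick network when the hypothesis fails, which is delicate precisely when the excess subsurface $(\Sigma\setminus Z)\setminus W$ supports no witness of its own. Both directions must be argued at the level of subsurface combinatorics — that under the hypothesis disjoint witnesses always complete to complementary co-connected pairs, and that otherwise there is enough room to chain product regions — and this step is sensitive to low-complexity subsurfaces whose curve graphs are small.
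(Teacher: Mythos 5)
This statement is not proved in the paper at all: it is quoted verbatim from \cite[Theorem 2.25]{russel2022thickness}, so there is no in-paper argument to compare against, and the relevant question is whether your outline would stand on its own. Its strategy does match that of the cited reference: endow $\mathcal{G}(\Sigma)$ with the hierarchically hyperbolic structure of \cite[Theorem 1.1]{vokes2022hirarchical}, identify orthogonality of domains with disjointness of witnesses, and then combine the isolated-orthogonality criterion for relative hyperbolicity with a direct thick-network construction. Part (1) is essentially sound: one direction is exactly \Cref{theorem:vokes-corollary1.5}, and for the converse you do not even need the commuting pseudo-Anosov orbit, since \Cref{theorem:upper-bound-quasiflat-rank} already produces uniformly quasi-isometrically embedded $2$-dimensional Euclidean balls of every radius as soon as two disjoint witnesses exist, which is incompatible with Gromov hyperbolicity. (If you keep the pseudo-Anosov route, you should also dispose of witnesses of complexity zero, whose curve graphs carry no loxodromic element.)

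The genuine gap is in Part (2) and the thick case: what you write there is a statement of what must be proved, not a proof. The equivalence between the displayed witness condition and isolated orthogonality in the sense of \cite{behrstock2019hierarchyII,russel2022thickness} --- that under the hypothesis every pair of disjoint witnesses completes to a \emph{unique} complementary co-connected pair, that the resulting family $\mathcal{I}$ consists of pairwise transverse domains with uniformly bounded coarse intersections of product regions, and that when the hypothesis fails the product regions can be chained into a thick network coarsely covering $\mathcal{G}(\Sigma)$ --- is precisely the content of the cited theorem, and you explicitly defer it (``the main obstacle is exactly this last equivalence''). The delicate points you yourself flag, namely disjoint witnesses that are not co-connected, the case where $(\Sigma\setminus Z)\setminus W$ supports no witness of its own, and low-complexity subsurfaces with small curve graphs, are exactly where the proof in \cite{russel2022thickness} does its real work. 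As written, the proposal is a correct road map consistent with the source, but it is not a proof; if the intention is to reprove the theorem rather than cite it, the isolated-orthogonality verification and the thickness construction must actually be carried out.
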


\section{Two kinds of interpolating graphs}

Let $\Sigma=\Sigma_{g,b}$ be the connected, compact, and orientable surface of genus $g$ with $b$ boundary components, and $\xi_0$ denotes the complexity of $\Sigma$, namely $\xi_0=3g-3+b$.
Mahan Mj provided a quasi-isometry map between the curve graph $\mathcal{C}(\Sigma)$ and the complexity-($\xi_0-1$) graph $\interpolate{\xi_0-1}(\Sigma)$ in \cite[Remark 1.4]{mj2009interpolating}.
We naturally extend this quasi-isometry to one between the $k$-multicurve graph $\multicurve{k}(\Sigma)$ and the complexity-$(\xi_0-k)$ graph $\interpolate{\xi_0-k}(\Sigma)$.
For each $k$-multicurve $\alpha=\{\alpha^1,\ldots,\alpha^k\}\in \multicurve{k}(\Sigma)$, we choose any pants decomposition $\tilde{\alpha}$ extending $\alpha$ (i.e.\ $\alpha\subset\tilde{\alpha}$).
We define the map $I\colon \multicurve{k}(\Sigma)\to \interpolate{\xi_0-k}(\Sigma)$ by $I(\alpha)=\tilde{\alpha}$.

\begin{theorem}\label{theorem:quasi-isometry-interplating-graphs}
    Let $\xi_0=3g-3+b$.
    Fix an integer $k$ with $1\leq k\leq \xi_0-1$.
    Then, for any $\alpha,\beta\in \multicurve{k}(\Sigma)$, we have 
    \begin{equation}\label{eq:lipshitz-inequality}
        C_k^{-1}d_{\multicurve{k}}(\alpha,\beta) -1 \leq d_{\interpolate{\xi_0-k}}(I(\alpha),I(\beta))\leq 2d_{\multicurve{k}}(\alpha,\beta),
    \end{equation}
    where $C_k=\min\{k,\xi_0-k\}$.
    Moreover the map $I$ is $1$-quasi-dense.
    In particular, $\multicurve{k}(\Sigma)$ and $\interpolate{\xi_0-k}(\Sigma)$ are quasi-isometric to each other for $k=1,\ldots,\xi_0$.
\end{theorem}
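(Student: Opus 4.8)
The plan is to establish the two inequalities in \eqref{eq:lipshitz-inequality} by controlling, one elementary move at a time, how the distance in one graph changes along a path in the other; the $1$-quasi-density is then immediate, and the concluding quasi-isometry statement follows formally. The case $k=\xi_0$ is trivial, since $\multicurve{\xi_0}(\Sigma)=\interpolate{0}(\Sigma)=\mathcal{P}(\Sigma)$ and one takes $I$ to be the identity; the case $\xi_0\le 1$ is degenerate (all the graphs coincide). So fix $1\le k\le\xi_0-1$.

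\emph{Upper bound and quasi-density.} For the upper bound it suffices to show $d_{\interpolate{\xi_0-k}}(I(\alpha),I(\beta))\le 2$ when $\alpha\sim\beta$ in $\multicurve{k}(\Sigma)$. Write $\nu=\alpha\cap\beta$, $\alpha\setminus\nu=\{c\}$, $\beta\setminus\nu=\{c'\}$; since $k<\xi_0$ the curves $c,c'$ are disjoint, so $\nu\cup\{c,c'\}$ is a multicurve with at most $k+1\le\xi_0$ components, and we extend it to a pants decomposition $\gamma$. Then $\alpha\subset\gamma$ and $\beta\subset\gamma$, so $\gamma$ differs from $I(\alpha)$ only inside a subsurface of $\Sigma\setminus\alpha$ (of complexity at most $\xi_0-k$) and from $I(\beta)$ only inside a subsurface of $\Sigma\setminus\beta$; by condition (ii) of \Cref{definition:complexity-xi-graph} these give edges $I(\alpha)\sim\gamma\sim I(\beta)$ (allowing equalities), hence the claim. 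Concatenating along a geodesic of $\multicurve{k}(\Sigma)$ yields the right-hand inequality. The same mechanism gives quasi-density: for any pants decomposition $P$ and any $k$-sub-multicurve $\alpha\subset P$, the decompositions $P$ and $I(\alpha)$ both contain $\alpha$ and differ only inside $\Sigma\setminus\alpha$, of complexity $\xi_0-k$, so $d_{\interpolate{\xi_0-k}}(P,I(\alpha))\le1$ by condition (ii), and $I$ is $1$-quasi-dense.

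\emph{Lower bound.} This is the heart of the argument. First an auxiliary observation: any two $k$-sub-multicurves $Q,Q'$ of one pants decomposition $P$ satisfy $d_{\multicurve{k}}(Q,Q')\le C_k$, because $|Q\cup Q'|\le\xi_0$ forces $|Q\setminus Q'|\le\min\{k,\xi_0-k\}=C_k$ and one passes from $Q$ to $Q'$ by repeatedly deleting a curve of $Q\setminus Q'$ and inserting one of $Q'\setminus Q$, each exchange being an edge of $\multicurve{k}(\Sigma)$ because all curves involved lie in the multicurve $P$ (so the exchanged curves are disjoint, which is the edge condition of \Cref{definition:k-multicurve-graph} when $k<\xi_0$). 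Now take a geodesic $I(\alpha)=P_0,P_1,\dots,P_n=I(\beta)$ in $\interpolate{\xi_0-k}(\Sigma)$ and build, by induction on $i$, $k$-sub-multicurves $Q_i\subset P_i$ with $Q_0=\alpha$ and $d_{\multicurve{k}}(Q_i,Q_{i+1})\le C_k$ for all $i$. If $P_i\sim P_{i+1}$ is a pants move exchanging $c\in P_i$ for $c'\in P_{i+1}$: if $c\notin Q_i$ put $Q_{i+1}=Q_i$; if $c\in Q_i$, write $Q_i=\mu\cup\{c\}$, pick $e\in(P_i\cap P_{i+1})\setminus\mu$ (possible since $|P_i\cap P_{i+1}|=\xi_0-1>|\mu|$), and put $Q_{i+1}=\mu\cup\{e\}$, an edge as $c,e$ are disjoint. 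If $P_i\sim P_{i+1}$ comes from condition (ii) via an essential subsurface $S$ with $\xi(S)\le\xi_0-k$: any curve of $Q_i$ not already in $P_{i+1}$ must lie in $\operatorname{int}(S)$ (a curve of $P_i$ outside $\operatorname{int}(S)$ is a boundary curve of $S$ or lies in the part $\Sigma\setminus S$ on which $P_i,P_{i+1}$ agree, hence survives), so there are at most $\xi(S)\le C_k$ of them; since $N\coloneqq P_i\cap P_{i+1}$ has $|N|\ge\xi_0-\xi(S)\ge k$, we may exchange those curves one at a time for curves of $N\setminus Q_i$, each exchange being an edge because a curve in $\operatorname{int}(S)$ is disjoint from every curve of $N$, arriving at $Q_{i+1}\subset N\subset P_{i+1}$ with $d_{\multicurve{k}}(Q_i,Q_{i+1})\le C_k$. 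Finally $Q_n$ and $\beta$ are both $k$-sub-multicurves of $I(\beta)$, so the auxiliary observation gives $d_{\multicurve{k}}(Q_n,\beta)\le C_k$, whence $d_{\multicurve{k}}(\alpha,\beta)\le C_k(n+1)=C_k\bigl(d_{\interpolate{\xi_0-k}}(I(\alpha),I(\beta))+1\bigr)$, which rearranges to the left-hand inequality.

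\emph{Conclusion and main difficulty.} The two inequalities make $I$ a quasi-isometric embedding, and together with $1$-quasi-density it is a quasi-isometry (and for $k=\xi_0$ the identity works). I expect the condition-(ii) step of the lower bound to require the most care: one must verify that a subsurface $S$ realizing such an edge can be chosen so that $P_i$ and $P_{i+1}$ share at least $\xi_0-\xi(S)$ curves — which needs the bookkeeping for how a compatible subsurface splits the curves of a pants decomposition into interior, boundary, and exterior ones — that at most $\xi(S)$ curves of $Q_i$ can be supported in $\operatorname{int}(S)$, and that each curve exchange genuinely produces an edge of $\multicurve{k}(\Sigma)$; here the standing assumption $k<\xi_0$, under which disjointness rather than minimal intersection is the edge condition, is essential. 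The remaining verifications — that cutting $\Sigma$ along a $k$-multicurve drops complexity by exactly $k$, and the elementary-move bookkeeping — are routine.
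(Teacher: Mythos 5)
Your proof is correct and follows essentially the same route as the paper: the upper bound and the $1$-quasi-density both reduce to the observation (the paper's Lemma on extensions, stated separately there) that any two pants decompositions extending the same $k$-multicurve are adjacent in $\interpolate{\xi_0-k}(\Sigma)$, and the lower bound chains $k$-sub-multicurves along the $\interpolate{\xi_0-k}$-geodesic via the auxiliary observation that two $k$-sub-multicurves of a single pants decomposition are within $C_k$ — the paper selects $\gamma_i\subset P_{i-1}\cap P_i$ directly for each edge instead of constructing $Q_i$ inductively, but it is the same idea. One small slip to fix: in the condition-(ii) case you assert $\xi(S)\le C_k$, which fails whenever $\xi_0-k>k$; what actually saves the argument is that at most $\min\{k,\xi(S)\}\le C_k$ curves of $Q_i$ can lie in $\operatorname{int}(S)$ since $|Q_i|=k$.
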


First, we prepare the following lemma.

\begin{lemma}\label{lemma:difference-for-choice-of-extensions}
    Let $\tilde{\alpha} $ and $ \tilde{\alpha}'$ be any pants decompositions which are extensions of $\alpha=\{\alpha^1,\ldots,\alpha^k\}\in\multicurve{k}(\Sigma)$. Then $d_{\interpolate{\xi_0-k}}(\tilde{\alpha}, \tilde{\alpha}')=1$.
\end{lemma}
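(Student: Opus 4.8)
The plan is to show directly that any two pants decompositions $\tilde\alpha, \tilde\alpha'$ that both extend the same $k$-multicurve $\alpha$ are joined by an edge in $\interpolate{\xi_0-k}(\Sigma)$ via condition (ii) of \Cref{definition:complexity-xi-graph}. (They are distinct vertices because $k \le \xi_0 - 1$ means there is at least one curve outside $\alpha$, so the extensions genuinely differ in general; if they happen to coincide there is nothing to prove, though note the lemma asserts distance exactly $1$, so we are implicitly in the case $\tilde\alpha \ne \tilde\alpha'$.) First I would set $S \coloneqq \Sigma \setminus \alpha$, the (isotopy class of the) complement of the $k$-multicurve $\alpha$. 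Each boundary component of $S$ is either a curve of $\alpha$ or homotopic to a boundary component of $\Sigma$, so $S$ is essential, and by construction every curve of $\alpha$ lies in $\partial S$ — in particular $S$ is compatible with both $\tilde\alpha$ and $\tilde\alpha'$. Moreover $\tilde\alpha$ and $\tilde\alpha'$ agree on $\Sigma \setminus S$: the complement of $S$ (after filling in) carries no essential curves at all since $S$ was obtained by cutting along \emph{all} of $\alpha$, so the "agree on the complement" clause is vacuously satisfied (equivalently, $\tilde\alpha \cap (\Sigma\setminus S) = \alpha = \tilde\alpha' \cap (\Sigma\setminus S)$).

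The one genuine point to verify is the complexity bound: we need $\xi(S) \le \xi_0 - k$. This follows from the additivity of complexity under disjoint union of subsurfaces together with the fact that cutting $\Sigma$ along a $k$-multicurve $\alpha$ drops the complexity by exactly $k$; concretely, $\tilde\alpha \setminus \alpha$ is a collection of $\xi_0 - k$ curves forming a pants decomposition of $S$, so $\xi(S) = \xi_0 - k \le \xi_0 - k$, with equality. Hence $S$ is an essential subsurface of complexity at most $\xi_0 - k$, compatible with both $\tilde\alpha$ and $\tilde\alpha'$, on whose complement the two pants decompositions coincide. By \Cref{definition:complexity-xi-graph}(ii), $\tilde\alpha$ and $\tilde\alpha'$ are joined by an edge in $\interpolate{\xi_0-k}(\Sigma)$, so $d_{\interpolate{\xi_0-k}}(\tilde\alpha, \tilde\alpha') \le 1$; since they are distinct vertices of a simplicial graph, the distance is exactly $1$.

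I do not anticipate a serious obstacle here — the lemma is essentially a bookkeeping check that the complement of $\alpha$ is exactly the subsurface that condition (ii) is designed to exploit. The only mild subtlety worth stating carefully is that $S$ need not be connected, but \Cref{definition:complexity-xi-graph}(ii) only requires an essential subsurface (not a connected one), and the complexity additivity $\xi(S) = \sum_i \xi(S_i)$ recorded in the Preliminaries handles the disconnected case uniformly. One should also make sure the representatives are chosen in minimal position so that "$\alpha \subset \partial S$" and "compatibility" are literal rather than up-to-isotopy statements, but this is routine. If one prefers to avoid the vacuous-complement phrasing, an alternative is to route through the pants graph: any two pants decompositions of a fixed surface can be connected by elementary moves, and all such moves among extensions of $\alpha$ are supported in $S$, but this gives only a bound on the distance rather than the sharp value $1$, so the direct argument via (ii) is preferable.
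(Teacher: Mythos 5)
Your proof is correct and follows essentially the paper's argument: set $S$ to be the complement of a regular neighborhood of $\alpha$, note $\xi(S)=\xi_0-k$, that $S$ is compatible with both extensions, and that $\tilde\alpha$ and $\tilde\alpha'$ coincide on $\Sigma\setminus S$, so condition (ii) of \Cref{definition:complexity-xi-graph} supplies the edge. Your two side remarks are also well taken: the stated equality ``$=1$'' tacitly assumes $\tilde\alpha\neq\tilde\alpha'$ (otherwise the distance is $0$), and the coincidence really occurs on the \emph{complement} of $S$ rather than on $S$ itself, which is the correct reading of the definition.
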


\begin{proof}
    Set $S=\Sigma\setminus N\!\left(\bigcup_{i=1}^k \alpha^i\right)$, where $N\!\left(\bigcup_{i=1}^k \alpha^i\right)$ denotes a regular neighborhood of $\bigcup_{i=1}^k \alpha^i$.  
    Then $\tilde{\alpha}$ and $\tilde{\alpha}'$ coincide on the subsurface $S$.  
    Since the complexity of $S$ is $\xi_0-k$, the vertices $\tilde{\alpha}$ and $\tilde{\alpha}'$ are joined by an edge of $\interpolate{\xi_0-k}(\Sigma)$.  
\end{proof}

\begin{proof}[Proof of \Cref{theorem:quasi-isometry-interplating-graphs}]
    
    In the definition of the map $I$, there is a choice of an extension of $\alpha$; however, from \Cref{lemma:difference-for-choice-of-extensions}, this changes the image by a distance of at most 1 in the metric $d_{\interpolate{\xi_0-k}}$.
    In addition, \Cref{lemma:difference-for-choice-of-extensions} implies that the map $I$ is 1-quasi-dense.


    Suppose that $k$-multicurves $\alpha=\{\alpha^1,\ldots,\alpha^k\}$ and $\beta=\{\beta^1,\ldots,\beta^k\}$ are joined by an edge of $\multicurve{k}(\Sigma)$.  
    Then $k-1$ components of $\alpha$ and $\beta$ coincide, and the remaining components are disjoint.  
    By relabeling the components if necessary, we may assume that $\alpha^1$ and $\beta^1$ are disjoint and that $\alpha^i=\beta^i$ for $2\le i \le k$.  
    Since the curves $\beta^1,\alpha^1,\alpha^2,\ldots,\alpha^k$ are pairwise disjoint, we can take a pants decomposition $P$ extending these $k+1$ curves.  
    Both the pants decomposition $P$ and $I(\alpha)$ are extensions of $\alpha$, and similarly both $P$ and $I(\beta)$ are extensions of $\beta$.  
    By \Cref{lemma:difference-for-choice-of-extensions}, we have
    \begin{equation*}
        d_{\interpolate{\xi_0-k}}(I(\alpha),P)
        = d_{\interpolate{\xi_0-k}}(I(\beta),P)
        = 1.
    \end{equation*}
    Thus, for $\alpha,\beta\in\multicurve{k}(\Sigma)$ with $d_{\multicurve{k}}(\alpha,\beta)=1$, we obtain
    \begin{equation*}
        d_{\interpolate{\xi_0-k}}(I(\alpha), I(\beta)) \leq 2.
    \end{equation*}    
    This proves the right-hand inequality in \Cref{eq:lipshitz-inequality}.

    Next, we prove the left-hand inequality.
    We fix two $k$-multicurves $\alpha, \beta\in\multicurve{k}(\Sigma)$ and put $n\coloneqq d_{\interpolate{\xi_0-k}}(I(\alpha),I(\beta))$.
    We take a geodesic $\{P_i\}_{i=0}^n \subset \multicurve{\xi_0-k}(\Sigma)$ between $I(\alpha)$ and $I(\beta)$, where $P_0=I(\alpha), P_n=I(\beta)$ and $d_{\interpolate{\xi_0-k}} (P_{i-1},P_i)=1$ for each $i$ with $1\leq i\leq n-1$.
    We fix an integer $i$ with $2\leq i\leq n$.
    Now, $P_{i-1}$ and $P_{i}$ satisfy either the condition (i) or (ii) in \Cref{definition:complexity-xi-graph}.
    In both cases, the two pants decompositions coincide outside a subsurface $S\subset \Sigma$ whose complexity is $\xi_0-k$.
    In other words, there is a $k$-multicurve $\gamma_i=\{\gamma_i^1,\ldots,\gamma_i^k\}\in \multicurve{k}(\Sigma)$ such that it lies in $\Sigma\setminus S$ and is contained in both $P_{i-1}$ and $P_i$.
    Therefore $P_{i-1}$ contains both $\gamma_{i-1}$ and $\gamma_i$.
    Then, we have 
    \begin{equation*}
        d_{\multicurve{k}}(\gamma_{i-1},\gamma_i)\leq \min\{k,\xi_0-k\}\eqqcolon C_k,
    \end{equation*}
    since $\gamma_i$ is obtained from $\gamma_{i-1}$ by replacing a curve of $\gamma_{i-1}$ with a curve of $\gamma_i$, and the number of such replacements is bounded by $C_k$.
    Moreover, $P_0=I(\alpha)$ contains both $\alpha$ and $\gamma_1$, and $P_n=I(\beta)$ contains both $\beta$ and $\gamma_n$. 
    Therefore, we obtain 
    \begin{align*}
        d_{\multicurve{k}}(\alpha,\beta) &\leq d_{\multicurve{k}}(\alpha,\gamma_1)+\sum_{i=2}^n d_{\multicurve{k}}(\gamma_{i-1},\gamma_i)+d_{\multicurve{k}}(\gamma_n,\beta) \\
        &\leq (n+1)C_k.
    \end{align*}
    This implies the left-hand inequality in \Cref{eq:lipshitz-inequality}.
\end{proof}

Based on the result of \cite[Theorem 1.3]{masur1999complex}, Mahan Mj \cite{mj2009interpolating} introduced the complexity-$\xi$ graph $\interpolate{\xi}(\Sigma)$ as a quasi-isometric model of the electrified Cayley graph of the mapping class group $\Mod(\Sigma)$ with respect to subsurfaces whose complexity is less than or equal to $\xi$.

Following \cite{mj2009interpolating}, we recall the construction of electrified Cayley graphs of $\Mod(\Sigma)$.
Fix an integer $\xi$ with $-1\leq \xi\leq 3g-4+b$.
Let $\mathfrak{S}_\xi(\Sigma)$ be the set of all essential subsurfaces of $\Sigma$ whose complexity is at least $\xi$.
Let $S_1,\ldots,S_k$ be subsurfaces constituting a complete set of representatives for the quotient $\Mod(\Sigma)\backslash \mathfrak{S}_\xi(\Sigma)$.
Choose a finite generating set of $\Mod(\Sigma)$, and let $\Gamma(\Sigma)$ be the corresponding Cayley graph of $\Mod(\Sigma)$.
We cone off $\Gamma(\Sigma)$ along all left cosets of subgroups $\Mod(S_i)$ for $i=1,\ldots,k$.
The resulting metric space is denoted by $\Gamma(\Sigma,\xi)$.

Let $\xi_0=3g-3+b$.
Masur and Minsky \cite{masur1999complex} showed that the curve graph $\mathcal{C}(\Sigma)$ is quasi-isometric to $\Gamma(\Sigma,\xi_0-1)$, and Mahan Mj \cite{mj2009interpolating} showed that $\interpolate{\xi}(\Sigma)$ is quasi-isometric to $\Gamma(\Sigma,\xi)$ for each $\xi$ with $-1\leq \xi \leq \xi_0-1$.
By combining these results with \Cref{theorem:quasi-isometry-interplating-graphs}, we obtain the following summary. 

\begin{corollary}
    The following metric spaces are mutually quasi-isometric:
    \begin{itemize}
        \item the $k$-multicurve graph $\multicurve{k}(\Sigma)$,
        \item the complexity-$(\xi_0-k)$ graph $\interpolate{\xi_0-k}(\Sigma)$, and 
        \item the electrified Cayley graph $\Gamma(\Sigma,\xi_0-k)$.
    \end{itemize}
\end{corollary}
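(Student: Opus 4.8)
The plan is to deduce this corollary purely formally from \Cref{theorem:quasi-isometry-interplating-graphs} together with Mahan Mj's quasi-isometry $\interpolate{\xi}(\Sigma)\simeq\Gamma(\Sigma,\xi)$, using that being quasi-isometric is an equivalence relation among metric spaces (in particular, that the composition of two quasi-isometries is again a quasi-isometry). So it suffices to arrange the three spaces in a chain of quasi-isometries with matching interpolation parameters.

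First I would record that \Cref{theorem:quasi-isometry-interplating-graphs} supplies, for every $k$ with $1\le k\le\xi_0$, a quasi-isometry $\multicurve{k}(\Sigma)\to\interpolate{\xi_0-k}(\Sigma)$: the inequality \Cref{eq:lipshitz-inequality} together with $1$-quasi-density handles $1\le k\le\xi_0-1$, while for $k=\xi_0$ one has $\xi_0-k=0$ and $\interpolate{0}(\Sigma)=\mathcal{P}(\Sigma)=\multicurve{\xi_0}(\Sigma)$ on the nose, so the identity map is an isometry in that degenerate case. Thus the first two items of the list are quasi-isometric for all admissible $k$.

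Next I would invoke Mj's theorem: for each integer $\xi$ with $-1\le\xi\le\xi_0-1$, the complexity-$\xi$ graph $\interpolate{\xi}(\Sigma)$ is quasi-isometric to the electrified Cayley graph $\Gamma(\Sigma,\xi)$. The point to check is a matter of parameter bookkeeping: as $k$ runs over $1,\dots,\xi_0$, the value $\xi=\xi_0-k$ runs over $0,\dots,\xi_0-1$, which lies inside the range $[-1,\xi_0-1]$ to which Mj's statement applies; at the extreme value $\xi=\xi_0-1$ (that is, $k=1$) this is also the Masur--Minsky quasi-isometry $\mathcal{C}(\Sigma)\simeq\Gamma(\Sigma,\xi_0-1)$, consistent with $\multicurve{1}(\Sigma)=\mathcal{C}(\Sigma)$. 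Composing the quasi-isometry of this step after that of the previous one yields a quasi-isometry $\multicurve{k}(\Sigma)\to\Gamma(\Sigma,\xi_0-k)$, which, with transitivity and symmetry, gives the mutual quasi-isometry of all three spaces.

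I do not expect a genuine obstacle here: all the substance is contained in \Cref{theorem:quasi-isometry-interplating-graphs} and in the cited work of Mj (and Masur--Minsky). The only thing demanding care is precisely the index matching across the two correspondences --- that the $k$-indexing of multicurve graphs and the $\xi$-indexing of complexity graphs are linked by $\xi=\xi_0-k$, and that this places every relevant case within the range of validity of Mj's model --- together with handling the endpoint $k=\xi_0$ separately, where \Cref{theorem:quasi-isometry-interplating-graphs} does not assert a quantitative bound but the two graphs in question literally coincide with the pants graph.
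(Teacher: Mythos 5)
Your proposal is correct and matches the paper's argument: the corollary is obtained exactly by combining \Cref{theorem:quasi-isometry-interplating-graphs} with Mj's quasi-isometry $\interpolate{\xi}(\Sigma)\simeq\Gamma(\Sigma,\xi)$ (and Masur--Minsky at the endpoint), using transitivity of quasi-isometry. Your extra care with the index bookkeeping and the degenerate case $k=\xi_0$ is consistent with, if slightly more explicit than, the paper's one-line deduction.
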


\section{Quasi-isometric geometry of interpolating graphs}

\subsection{Hyperbolicity of two kinds of interpolating graphs}

We fix an integer $k$ with $1\leq k\leq 3g-3+b$.
For the $k$-multicurve graph $\multicurve{k}(\Sigma)$, we obtain the following.

\begin{proposition}\label{proposition:witness-for-k-multicurve}
    For a connected essential subsurface $S\subset \Sigma$, the following conditions are equivalent:
    \begin{enumerate}
        \item The subsurface $S$ is a witness for the $k$-multicurve graph $\multicurve{k}(\Sigma)$.
        \item The complexity $\xi(S)$ is at least $3g-3+b-(k-1)$.
    \end{enumerate} 
\end{proposition}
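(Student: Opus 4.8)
The plan is to reduce the witness condition to a statement about the complexity of the complementary subsurface, via a ``pants decomposition sandwich''. Write $\xi_0=3g-3+b$, and for an essential subsurface $S$ let $\Sigma\setminus S$ denote the complementary essential subsurface. The first step is to reformulate condition (1): I claim that $S$ is a witness for $\multicurve{k}(\Sigma)$ if and only if no $k$-multicurve on $\Sigma$ can be isotoped into $\Sigma\setminus S$. This follows by unwinding the definition of essential intersection. Putting a $k$-multicurve $\alpha$ in minimal position with $\partial S$, a component of $\alpha$ has essential intersection with $S$ exactly when $\alpha\cap S$ contains an essential arc of $S$ or an essential curve of $S$; if no component of $\alpha$ has essential intersection with $S$, then every component of $\alpha\cap S$ is boundary-parallel or trivial in $S$, hence can be removed by an isotopy, so $\alpha$ is isotopic into $\Sigma\setminus S$, and the converse is immediate. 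In particular, no annulus is a witness for $\multicurve{k}(\Sigma)$, in agreement with $\xi(S)=0$ never satisfying condition (2) when $k\le\xi_0$.

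The core of the argument is the identity: the maximal size of a multicurve on $\Sigma$ each of whose components is isotopic into $\Sigma\setminus S$ equals $\xi_0-\xi(S)$. For the upper bound, given such a multicurve $\nu$, choose any pants decomposition $Q$ of $S$; then $\nu\cup Q$ is a multicurve on $\Sigma$---the two families lie on opposite sides of $\partial S$, and no component of $\nu$ can be isotopic in $\Sigma$ to a component of $Q$, since the latter are essential in $S$---so $|\nu|+\xi(S)=|\nu\cup Q|\le\xi_0$. For the lower bound, extend $Q$ together with the $\Sigma$-essential components of $\partial S$ to a pants decomposition $P$ of $\Sigma$; then every curve of $P\setminus Q$ lies in $\Sigma\setminus S$ and $|P\setminus Q|=\xi_0-\xi(S)$, and since $P\setminus Q$ is a sub-multicurve of $P$, any $k$ of its curves form a genuine $k$-multicurve on $\Sigma$ whenever $k\le\xi_0-\xi(S)$. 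Combining this with the first step, a $k$-multicurve isotopic into $\Sigma\setminus S$ exists if and only if $k\le\xi_0-\xi(S)$, hence $S$ is a witness for $\multicurve{k}(\Sigma)$ if and only if $k>\xi_0-\xi(S)$, that is, $\xi(S)\ge\xi_0-k+1=3g-3+b-(k-1)$, which is precisely the equivalence of (1) and (2).

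The argument is essentially Euler-characteristic bookkeeping, and I expect the only genuine subtlety to lie in the first step: making the minimal-position argument for ``essential intersection'' precise, in particular correctly handling components of $\alpha$ that are isotopic to a component of $\partial S$, which do \emph{not} have essential intersection with $S$. A secondary point requiring care is checking, using that $S$ is an essential subsurface, that $\nu\cup Q$ and the subcollections of $P\setminus Q$ really are multicurves on $\Sigma$---i.e.\ that no unexpected parallelisms arise when $S$ or a complementary piece happens to be an annulus---but this is standard.
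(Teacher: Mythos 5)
Your argument is correct and is essentially the paper's own proof: both rest on the count that $k$ disjoint curves isotopic off $S$ together with a pants decomposition of $S$ give at most $\xi_0=3g-3+b$ disjoint non-isotopic curves, and conversely that extending a pants decomposition of $S$ (and $\partial S$) to one of $\Sigma$ produces $\xi_0-\xi(S)$ curves in the complement. You simply carry out both directions in full detail (the paper leaves the converse as ``can also be verified'') and make explicit the standard minimal-position and no-unexpected-parallelism points.
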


\begin{proof}
    We suppose that the negation of the condition (1) holds; namely, the essential subsurface $S$ is not a witness for $\multicurve{k}(\Sigma)$.
    Then we can find at least $k$ pairwise disjoint essential curves on $\Sigma$ lying outside $S$.
    Since the total number of pairwise disjoint essential curves on $\Sigma$ is $3g-3+b$, the maximal number of pairwise disjoint essential curves on $S$ is at most $3g-3+b-k$.
    This implies that $\xi(S)$ is at most $3g-3+b-k$.
    Therefore, we see that the negation of condition (1) implies the negation of condition (2).
    The converse direction can  also be verified, and hence we obtain the desired conclusion.
\end{proof}

Erlandsson and Fanoni prove that the $k$-multicurve graph $\multicurve{k}(\Sigma)$ is connected, and hence $\multicurve{k}(\Sigma)$ satisfies condition (1) in \Cref{definition:twistfree-multicurve-graph} \cite[Lemma 2.2]{erlandsson2017multicurve}.
We verify that the $k$-multicurve graph $\multicurve{k}(\Sigma)$ satisfies conditions (2)--(4) in \Cref{definition:twistfree-multicurve-graph}.

\begin{proposition}
    The $k$-multicurve graph $\multicurve{k}(\Sigma)$ is twist-free.
\end{proposition}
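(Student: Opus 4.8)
The plan is to verify conditions (2)--(4) of \Cref{definition:twistfree-multicurve-graph} in turn; each is short. For condition (2), the mapping class group acts on curves and preserves disjointness and intersection numbers, hence it sends $k$-multicurves to $k$-multicurves and preserves the adjacency relation of \Cref{definition:k-multicurve-graph}; thus the action induces graph automorphisms. For condition (3), I would use the explicit description of edges: if $\alpha,\beta\in\multicurve{k}(\Sigma)$ are joined by an edge, then $k-1$ of their components agree, and the two remaining curves $\alpha\setminus\nu$ and $\beta\setminus\nu$ are either disjoint (when $k<3g-3+b$) or intersect minimally inside the complementary subsurface $\Sigma\setminus\nu$ (when $k=3g-3+b$). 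In the first case $i(\alpha,\beta)=0$; in the second case the complementary subsurface is a one-holed torus or four-holed sphere, in which two curves intersecting minimally meet at most $2$ times (once for the torus, twice for the sphere), so $i(\alpha,\beta)\le 2$. Hence $R=2$ works uniformly.

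Condition (4) is the one that actually uses \Cref{proposition:witness-for-k-multicurve}: a witness $S$ must have $\xi(S)\ge 3g-3+b-(k-1)=3g-3+b-k+1$, and since $k\le 3g-3+b$ this forces $\xi(S)\ge 1$, so $S$ cannot be an annulus. Thus the set of witnesses for $\multicurve{k}(\Sigma)$ contains no annuli. Together with connectedness (already established in \cite[Lemma 2.2]{erlandsson2017multicurve}, i.e.\ condition (1)), this shows $\multicurve{k}(\Sigma)$ is twist-free.

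I expect no serious obstacle here; the only point requiring slight care is the bound on $i(\alpha,\beta)$ in condition (3) in the pants-graph case $k=3g-3+b$, where one must recall that ``minimal intersection in the complementary subsurface'' means the standard elementary move, and identify that complementary subsurface as $\Sigma_{1,1}$ or $\Sigma_{0,4}$ to get the explicit constant $2$; everything else is immediate from the definitions and \Cref{proposition:witness-for-k-multicurve}.
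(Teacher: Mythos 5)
Your proposal is correct and takes exactly the approach the paper intends: the paper cites Erlandsson--Fanoni for connectedness and then simply asserts that conditions (2)--(4) of \Cref{definition:twistfree-multicurve-graph} hold, without writing out the verifications, and your argument supplies precisely those details (the mapping class group action preserving adjacency, the uniform bound $R=2$ on intersection numbers via the one-holed torus and four-holed sphere in the pants-graph case $k=3g-3+b$, and the application of \Cref{proposition:witness-for-k-multicurve} to rule out annular witnesses). The only cosmetic caveat is that the paper's wording of the edge condition reads ``disjoint when $k\le 3g-3+b$'' rather than ``$k<3g-3+b$''; this makes no difference, since for $k=3g-3+b$ the complement of the $(k-1)$-multicurve has a unique complexity-one piece in which two distinct essential curves can never be disjoint, so only the minimal-intersection clause applies there.
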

Let $m(g,b,k)$ be the maximal number of pairwise disjoint subsurfaces of $\Sigma_{g,b}$ whose complexities are at least $3g-3+b-(k-1)$.
From \Cref{theorem:vokes-corollary1.5}, we obtain the following.

\begin{theorem}
    The $k$-multicurve graph $\multicurve{k}(\Sigma_{g,b})$ is Gromov hyperbolic if and only if $m(g,b,k)=1$.
\end{theorem}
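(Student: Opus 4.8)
**Proof proposal for: "The $k$-multicurve graph $\multicurve{k}(\Sigma_{g,b})$ is Gromov hyperbolic if and only if $m(g,b,k)=1$."**

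The plan is to combine the structural results already assembled in the excerpt. First I would invoke the preceding proposition that $\multicurve{k}(\Sigma)$ is twist-free; this licenses the use of Vokes' theory. Next I would apply \Cref{proposition:witness-for-k-multicurve}, which identifies the connected witnesses for $\multicurve{k}(\Sigma)$ as exactly the connected essential subsurfaces $S \subset \Sigma$ with $\xi(S) \geq 3g-3+b-(k-1)$. By the definition of $m(g,b,k)$ given immediately before the statement, $m(g,b,k)$ is precisely the maximal number of pairwise disjoint such subsurfaces, i.e., the maximal number of pairwise disjoint connected witnesses for $\multicurve{k}(\Sigma)$.

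With these identifications in hand, the two directions follow quickly. For the "if" direction, suppose $m(g,b,k)=1$: then there is no pair of disjoint witnesses for $\multicurve{k}(\Sigma)$, so \Cref{theorem:vokes-corollary1.5} applies directly and yields that $\multicurve{k}(\Sigma)$ is Gromov hyperbolic. For the "only if" direction, I would argue the contrapositive: if $m(g,b,k) \geq 2$, then there exist two disjoint connected witnesses $S_1, S_2$, each of complexity at least $3g-3+b-(k-1) \geq 1$, so each contains an essential curve and the Distance Formula (\Cref{theorem:distance-formula}) — or equivalently the quasi-flat lower bound of \Cref{theorem:upper-bound-quasiflat-rank} applied with $\nu \geq 2$ — produces a quasi-isometrically embedded $\Z^2$, hence a flat, contradicting Gromov hyperbolicity. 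Alternatively one can cite \Cref{theorem:classification-hyperbolic-relhyperbolic-thick}(1), which states outright that a twist-free graph of multicurves is hyperbolic if and only if it admits no pair of disjoint witnesses; together with the witness identification above this gives both directions at once.

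I would organize the write-up to lean on \Cref{theorem:classification-hyperbolic-relhyperbolic-thick}(1) for the cleanest argument, using \Cref{theorem:vokes-corollary1.5} only as a remark for the forward direction. The only genuinely nontrivial input is the witness characterization of \Cref{proposition:witness-for-k-multicurve}, which has already been proved; everything else is a matter of unwinding definitions. I do not anticipate a serious obstacle: the main care needed is simply checking that "witness" in Vokes' sense (connected essential subsurface meeting every vertex multicurve) matches the set used to define $m(g,b,k)$, which is exactly the content of the cited proposition, and confirming that $3g-3+b-(k-1) \geq 1$ for all relevant $k$ so that a witness genuinely carries a curve (this holds whenever $k \leq 3g-3+b$, i.e., throughout the stated range).
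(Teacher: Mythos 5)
Your proposal is correct and follows essentially the same route as the paper: identify the witnesses via \Cref{proposition:witness-for-k-multicurve}, note that $m(g,b,k)$ counts the maximal number of pairwise disjoint witnesses, and conclude via \Cref{theorem:vokes-corollary1.5} together with \Cref{theorem:classification-hyperbolic-relhyperbolic-thick}(1). If anything, your write-up is slightly more careful than the paper's one-line derivation, since you explicitly note that the ``only if'' direction needs the classification (or quasi-flat) result rather than \Cref{theorem:vokes-corollary1.5} alone.
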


In \Cref{section:maximal-number-of-witnesses}, we provide a formula for computing the number $m(g,b,k)$.

\subsection{Quasi-flat embedding}

Let $m=m(g,b,k)$ be the maximum number of pairwise disjoint subsurfaces on $\Sigma$ whose complexities are at least $3g-3+b-(k-1)$.
In this subsection, following \cite{brock2006curvature} and using the distance formula (\Cref{theorem:distance-formula}), we directly construct a quasi-isometric embedding $\mathbb{Z}^m \to \multicurve{k}(\Sigma)$.

\begin{theorem}\label{theorem:quasi-flat-embedding-rank-nu}
    There exists a quasi-isometric embedding $\mathbb{Z}^m\to \multicurve{k}(\Sigma)$.
\end{theorem}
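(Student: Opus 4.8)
The plan is to exhibit a quasi-flat of the optimal dimension $m=m(g,b,k)$ as a ``product of pseudo-Anosov orbits'' supported on a maximal family of pairwise disjoint witnesses, in the spirit of \cite{brock2006curvature}, and to read off its distances from the Distance Formula (\Cref{theorem:distance-formula}). First I would record that $m\le k$: were $W_1,\dots,W_{k+1}$ pairwise disjoint witnesses, picking one essential curve $\gamma_i\subset W_i$ for $1\le i\le k$ would produce a $k$-multicurve disjoint from $W_{k+1}$, contradicting that $W_{k+1}$ is a witness. Next I would fix pairwise disjoint connected witnesses $W_1,\dots,W_m$ realizing $m$; by \Cref{proposition:witness-for-k-multicurve} each satisfies $\xi(W_i)\ge 3g-2+b-k\ge 1$, so $W_i$ is neither an annulus nor a pair of pants and supports a pseudo-Anosov mapping class, which I extend by the identity to $\phi_i\in\Mod(\Sigma)$. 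The $\phi_i$ commute pairwise, having disjoint supports, and by Masur--Minsky each acts on $\mathcal{C}(W_i)$ as a loxodromic isometry, so any orbit map $n\mapsto\phi_i^{\,n}(x)$ is a quasi-isometric embedding $\Z\to\mathcal{C}(W_i)$; in particular there are constants $\tau_i>0$, $b_i\ge 0$ with $d_{\mathcal{C}(W_i)}(\phi_i^{\,p}(x),\phi_i^{\,q}(x))\ge\tau_i|p-q|-b_i$. Finally I would choose a $k$-multicurve $\alpha_0$ \emph{adapted} to $\{W_i\}$: every component of $\alpha_0$ lies in some $W_i$ or is disjoint from $W_1\cup\dots\cup W_m$, and $\beta_i\coloneqq\alpha_0\cap W_i\neq\varnothing$ for every $i$. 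Such $\alpha_0$ exists because $m\le k\le\xi_0$: take a pants decomposition $P$ of $\Sigma$ containing $\bigcup_i\partial W_i$ and restricting to a pants decomposition on each $W_i$, and let $\alpha_0$ consist of $k$ curves of $P$ including at least one interior curve of each $W_i$.

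I would then set $\Phi\colon\Z^m\to\multicurve{k}(\Sigma)$, $\Phi(\mathbf{n})=(\phi_1^{\,n_1}\cdots\phi_m^{\,n_m})(\alpha_0)$; this is a $k$-multicurve, the product being a homeomorphism. For the upper bound, the $\phi_i$ are commuting isometries of $\multicurve{k}(\Sigma)$, so the triangle inequality gives $d_{\multicurve{k}}(\Phi(\mathbf{n}),\Phi(\mathbf{n}'))\le\sum_i d_{\multicurve{k}}(\alpha_0,\phi_i^{\,n_i'-n_i}\alpha_0)\le c\,\|\mathbf{n}-\mathbf{n}'\|_1$ with $c=\max_i d_{\multicurve{k}}(\alpha_0,\phi_i\alpha_0)$. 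For the lower bound I would invoke adaptedness: no component of $\Phi(\mathbf{n})$ crosses $\partial W_i$, the components lying in $W_i$ are exactly $\phi_i^{\,n_i}(\beta_i)$ (since $\phi_j$ fixes $W_i$ for $j\ne i$), and every other component is disjoint from $W_i$; hence, by equivariance of subsurface projection, $\pi_{W_i}(\Phi(\mathbf{n}))=\phi_i^{\,n_i}(\beta_i)$ in $\mathcal{C}(W_i)$, so $d_{\mathcal{C}(W_i)}(\pi_{W_i}\Phi(\mathbf{n}),\pi_{W_i}\Phi(\mathbf{n}'))\ge\tau_i|n_i-n_i'|-b_i'$ for some $b_i'\ge 0$ (absorbing the bounded diameter of $\beta_i$). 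Each $W_i$ is a connected witness, hence lies in $\mathfrak{X}(\multicurve{k}(\Sigma))$ by \Cref{proposition:witness-for-k-multicurve}; plugging these estimates into \Cref{theorem:distance-formula} and discarding all summands except those indexed by $W_1,\dots,W_m$ (the summands are nonnegative and $[\,\cdot\,]_C$ is monotone) yields $d_{\multicurve{k}}(\Phi(\mathbf{n}),\Phi(\mathbf{n}'))\ge K_1^{-1}\sum_{i=1}^m[\tau_i|n_i-n_i'|-b_i']_C-K_2\ge c'\|\mathbf{n}-\mathbf{n}'\|_1-c''$ for suitable $c',c''$, because each coordinate whose discrepancy exceeds a fixed threshold contributes linearly while the finitely many others change the bound by at most a constant. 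Together with the upper bound, $\Phi$ is a quasi-isometric embedding.

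The only genuinely non-formal step will be the decoupling in the lower bound: one must guarantee that $\pi_{W_i}(\Phi(\mathbf{n}))$ depends on $\mathbf{n}$ only through $n_i$, which is exactly what adaptedness of $\alpha_0$ --- combined with the disjointness of the supports of the $\phi_j$ --- is designed to secure; without it, a component of $\Phi(\mathbf{n})$ meeting $\partial W_i$ could make the $W_i$-projection wobble with the other coordinates. A subsidiary point is producing an adapted $\alpha_0$ with a nonempty piece in every $W_i$, which is where $m\le k$ is used. The remaining ingredients --- the Lipschitz bound, the loxodromic translation-length estimate, and extracting the lower bound from the Distance Formula --- should be routine.
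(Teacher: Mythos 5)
Your argument is correct, and it follows the same overall strategy as the paper (a quasi-flat supported on a maximal family of pairwise disjoint witnesses, with the lower bound extracted from the Distance Formula via the projections $\pi_{W_i}$), but the construction of the flat and the Lipschitz upper bound are handled differently. The paper parametrizes the flat by bi-infinite \emph{geodesics} $a_i\colon\Z\to\mathcal{C}(S_i)$ together with auxiliary disjoint curves $b_i^j(k_i)$ filling out each witness; the price is that a unit step in the $i$-th coordinate changes $l_i+1$ curves, so the Lipschitz bound requires a non-trivial ``parking'' argument (temporarily relocating the $b_i^j$ into $S_1$) to bound the number of elementary moves by $2l_i+1$. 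You instead push a single adapted base multicurve $\alpha_0$ around by commuting pseudo-Anosov maps $\phi_i$ supported on the $W_i$; the upper bound then comes for free from equivariance and the triangle inequality, and the only analytic input is the Masur--Minsky positive translation length of $\phi_i$ on $\mathcal{C}(W_i)$, which replaces the paper's exact equality $d_{\mathcal{C}(S_i)}(a_i(k_i),a_i(j_i))=|k_i-j_i|$ by a coarse lower bound --- entirely sufficient for a quasi-isometric embedding. Your observations that $m\le k$ and that an adapted $\alpha_0$ meeting every $W_i$ exists are correct and play the same role as the paper's choice of the $(k-1)$-multicurve $\gamma$ meeting each $S_i$ ($i\ge 2$) plus the extra curve $c\subset S_1$. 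In short, both proofs work; yours trades the paper's elementary but fiddly combinatorial upper bound for a citation to pseudo-Anosov dynamics on the curve complex.
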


\begin{remark}
    In \cite{mj2009interpolating}, Mahan Mj determined the quasi-flat rank of the complexity-$\xi$ graph $\interpolate{\xi}(\Sigma)$.
    Combining the result with \Cref{theorem:quasi-isometry-interplating-graphs}, we find the quasi-flat rank of the $k$-multicurve graph, however we instead provide a direct construction of a quasi-isometric embedding into $\multicurve{k}(\Sigma)$, following the construction of \cite{brock2006curvature}.
\end{remark}

\begin{proof}[Proof of \Cref{theorem:quasi-flat-embedding-rank-nu}]
    Let $S_1,\ldots,S_m \subset \Sigma$ be pairwise disjoint subsurfaces of complexity $3g-3+b-(k-1)$.
    Since the case of $k=3g-3+b$ was proven in \cite{brock2006curvature}, we may assume the complexity $\xi(S_i)$ of each $S_i$ is at least $2$. 
    From \Cref{proposition:witness-for-k-multicurve}, the subsurface $S_1$ is not a witness for $\multicurve{k-1}(\Sigma)$, since $\xi(S_1)\leq 3g-3+b-(k-2)$.
    Therefore, we can choose a $(k-1)$-multicurve $\gamma$ such that each component of $\gamma$ lies in $\Sigma\setminus S_1$ and is either disjoint from or parallel to every component of $\partial S_i$ for all $i=1,\ldots,m$.
    Moreover, for each $i$ with $2\leq i\leq m$, we may assume that the subsurface $S_i$ contains at least one component of $\gamma$ as an essential curve in $S_i$.
    We take an additional essential curve $c$ in $S_1$.
    Then $\alpha\coloneqq \{c\}\cup \gamma$ is a $k$-multicurve on $\Sigma$.
    We will construct a quasi-flat $Q\colon \mathbb{Z}^m\to \multicurve{k}(\Sigma)$ with $Q(\bm{0})=\alpha$.
    Let $a_i(0),b_i^1(0),\ldots,b_i^{l_i}(0) \in \alpha $ be the curves that are essentially contained in $S_i$.
    On $S_1$, there is a unique curve $c\in \alpha$, which we denote by $a_1(0)$.
    Let $\gamma_0\subset \alpha$ denote the family of curves lying outside $\bigcup_{i=1}^m S_i$.

    For each $i=1,\ldots, m$, let $a_i\colon \Z\to \mathcal{C}(S_i)$ be a bi-infinite geodesic curve with the initial point $a_i(0)$ for each $i$ with $1\leq i\leq m$.
    For every $k\in \Z$ and an integer $i$ with $2\leq i\leq m$, we take a $l_i$-multicurve 
    \begin{equation*}
        \{b_i^1(k),\ldots,b_i^{l_i}(k)\} \in \multicurve{\ell_i}(S_i)
    \end{equation*}
    such that each $b_i^j(k)\ (1\leq j\leq l_i)$ is disjoint from $a_i(k)$ (possibly $l_i=0$).
    Let $\gamma_i(k)$ denote the $(l_i+1)$-curve $\{a_i(k),b_i^1(k), \ldots, b_i^{l_i}(k)\} \in \mathcal{C}(S_i)$.
    For each $(k_1,\ldots,k_m) \in \Z^m$, we define 
    \begin{align*}
        Q(k_1,\ldots,k_m) =\gamma_0\cup \gamma_1(k_1)\cup \gamma_2(k_2)\cup \cdots\cup \gamma_m(k_m) \in \multicurve{k}(\Sigma).
    \end{align*}
    We prove that the map $Q\colon \Z^m \to \multicurve{k}(\Sigma)$ is a quasi-isometric embedding.

    First, since $Q(k_1,k_2,\ldots, k_m)$ and $Q(k_1\pm 1, k_2,\ldots,k_m)$ are clearly disjoint, the $\multicurve{k}$-distance between them is equal to $1$.
    Therefore we obtain 
    \begin{equation*}
        d_{\multicurve{k}}(Q(k_1,k_2,\ldots, k_m),Q(j_1, k_2,\ldots,k_m))\leq |k_1-j_1|.
    \end{equation*}

    Next, we fix $i$ with $2\leq i\leq m$.
    For each $\bm{k}=(k_1,\ldots,k_m)$, we consider the distance $d_{\multicurve{k}}(Q(\bm{k}),Q(\bm{k}\pm\bm{e}_i))$, where $\bm{e}_i$ is the $i$-th standard basis of $\Z^m$.
    Now, the subsurface $S_1$ can contain $3g-3+b-(k-1)$ essential curves in $S_1$.
    Since $l_i+1$ is at most $3g-3+b-(k-1)$, we can take $l_i$ essential curves $c_1,\ldots,c_{l_i}$ in $S_1$ such that $a_1(k_1),c_1,\ldots,c_{l_i}$ are pairwise disjoint.
    Replacing curves $b_i^1(k_i),\ldots,b_i^{l_i}(k_i)$ in $Q(\bm{k})$ with $c_1,\ldots,c_{l_i}$, we denote the resulting $k$-multicurve by $Q(\bm{k})'$.
    Then, the distance between $Q(\bm{k})$ and $Q(\bm{k})'$ is at most $l_i$.
    In addition, we move $a_i(k_i)$ of $Q(\bm{k})'$ to $a_i(k_i\pm 1)$, and then replace $c_1,\ldots,c_{l_i}$ by $b_i^1(k_i\pm 1),\ldots,b_i^{l_i}(k_i\pm 1)$ (where the sign $\pm$ is taken consistently).
    Then, the replaced multicurve is $Q(\bm{k}\pm \bm{e}_i)$.
    Since $Q(\bm{k})'$ differs from $Q(\bm{k}\pm \bm{e}_i)$ by $l_i+1$ curves,
    the distance $d_{\multicurve{k}}(Q(\bm{k})',Q(\bm{k}\pm \bm{e}_i))$ is at most $l_i+1$.
    Therefore, we have 
    \begin{align*}
        d_{\multicurve{k}}(Q(\bm{k}),Q(\bm{k}\pm\bm{e}_i)) & \leq d_{\multicurve{k}}(Q(\bm{k}),Q(\bm{k})')+d_{\multicurve{k}}(Q(\bm{k})',Q(\bm{k}\pm\bm{e}_i)) \\
        & \leq 2l_i+1 \\
        &\leq 2(3g-3+b-k)+1 \eqqcolon C_{g,b,k}.
    \end{align*}
    Thus, we obtain
    \begin{equation}\label{eq:lipshitz-inequaltiy-for-quasiflat}
        d_{\multicurve{k}}(Q(\bm{k}),Q(\bm{j}))\leq C_{g,b,k} d_{\Z^m}(\bm{k},\bm{j}),
    \end{equation}
    where $d_{\Z^m}(\bm{k},\bm{j})=\sum_{i=1}^m |k_i-j_i|$.

    On the other hand, by \Cref{theorem:distance-formula}, there exist $C_0>0, K_1\geq 1, K_2>0$ such that 
    \begin{equation*}
        K_1 d_{\multicurve{k}}(Q(\bm{k}),Q(\bm{j})) +K_2 
        \geq  \sum_{S\in \mathfrak{X}(\multicurve{k}(\Sigma))} [d_{S}(\pi_S(Q(\bm{k})),\pi_S(Q(\bm{j})))]_{C_0},
    \end{equation*}
    where $d_S(A,B)\coloneqq \mathrm{diam}_{\mathcal{C}(S)}(A\cup B)$ for subsets $A,B\subset \mathcal{C}(\Sigma)$.
    For each $i$ with $1\leq i\leq m$, 
    \begin{align*}
        d_{S_i}(\pi_{S_i}(Q(\bm{k})),\pi_{S_i}(Q(\bm{j}))) &= \mathrm{diam}_{\mathcal{C}(S_i)} (\gamma_i(k_i)\cup \gamma_i(j_i)) \\
        &\geq d_{\mathcal{C}(S_i)}(a_i(k_i),a_i(j_i)) \\
        &=|k_i-j_i|.
    \end{align*}
    Therefore, we obtain
    \begin{align*}
         K_1 d_{\multicurve{k}}(Q(\bm{k}),Q(\bm{j})) +K_2 
        &\geq \max_{1\leq i\leq m} |k_i-j_i| \\
        &\geq \frac{1}{m} \sum_{i=1}^m |k_i-j_i| =\frac1m d_{\Z^m}(\bm{k},\bm{j}).
    \end{align*}
    Together with the inequality \Cref{eq:lipshitz-inequaltiy-for-quasiflat}, this gives the desired conclusion.
\end{proof}

By \Cref{theorem:quasi-flat-embedding-rank-nu}, we find that the number $m(g,b,k)$ provides a lower bound for the quasi-flat rank of $\multicurve{k}(\Sigma)$.
On the other hand, \Cref{theorem:upper-bound-quasiflat-rank} implies that the number $m(g,b,k)$ is also an upper bound for the quasi-flat rank.
Therefore, we have the following corollary.

\begin{corollary}
    Let $m$ be the maximum number of pairwise disjoint subsurfaces on $\Sigma$ whose complexities are at least $3g-3+b-(k-1)$.
    The quasi-flat rank of the $k$-multicurve graph $\multicurve{k}(\Sigma)$ is equal to $m$.
\end{corollary}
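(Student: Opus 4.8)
The plan is to pin down the quasi-flat rank of $\multicurve{k}(\Sigma)$ by establishing the two inequalities ``rank $\geq m$'' and ``rank $\leq m$'' separately, each of which is nearly immediate from a result already in hand.

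For the lower bound, I would simply invoke \Cref{theorem:quasi-flat-embedding-rank-nu}, which furnishes a quasi-isometric embedding $\Z^m\to\multicurve{k}(\Sigma)$; by the very definition of the quasi-flat rank, this shows that the rank of $\multicurve{k}(\Sigma)$ is at least $m$.

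For the upper bound, I would first recall that $\multicurve{k}(\Sigma)$ is a twist-free graph of multicurves, so that \Cref{theorem:upper-bound-quasiflat-rank} applies to it, and that, by \Cref{proposition:witness-for-k-multicurve}, its connected witnesses are exactly the connected essential subsurfaces of $\Sigma$ of complexity at least $3g-3+b-(k-1)$; hence the maximal number $\nu$ of pairwise disjoint witnesses for $\multicurve{k}(\Sigma)$ equals $m$. Now suppose $f\colon\Z^n\to\multicurve{k}(\Sigma)$ is a $(K,L)$-quasi-isometric embedding. Restricting $f$ to the lattice points contained in a Euclidean ball, and using that $B_R^n\cap\Z^n$ is $C$-quasi-dense in $B_R^n$ for a constant $C$ independent of $R$, one obtains, for every $R>0$, a $(K',K')$-quasi-isometric embedding $B_R^n\to\multicurve{k}(\Sigma)$ with $K'$ depending only on $n$, $K$, and $L$. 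By \Cref{theorem:upper-bound-quasiflat-rank} this forces $n\leq\nu=m$, so the quasi-flat rank of $\multicurve{k}(\Sigma)$ is at most $m$. Combining this with the lower bound yields the claimed equality.

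The only point requiring a little care — and it amounts to bookkeeping — is the passage from a discrete quasi-flat $\Z^n$ to the uniform family of quasi-isometric embeddings of the Euclidean balls $B_R^n$ demanded by the formulation of \Cref{theorem:upper-bound-quasiflat-rank}; this is dispatched by the standard quasi-density comparison between $\Z^n$ and $\R^n$ sketched above. As an alternative to the appeal to \Cref{theorem:upper-bound-quasiflat-rank} for the upper bound, one could instead combine Mahan Mj's computation of the quasi-flat rank of $\interpolate{\xi_0-k}(\Sigma)$ with the fact that $\multicurve{k}(\Sigma)$ and $\interpolate{\xi_0-k}(\Sigma)$ are quasi-isometric, established in \Cref{theorem:quasi-isometry-interplating-graphs}, since the quasi-flat rank is a quasi-isometry invariant.
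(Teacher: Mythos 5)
Your proposal is correct and follows essentially the same route as the paper: the lower bound from \Cref{theorem:quasi-flat-embedding-rank-nu} and the upper bound from \Cref{theorem:upper-bound-quasiflat-rank} via \Cref{proposition:witness-for-k-multicurve}. The only difference is that you spell out the (standard, and correct) conversion from a quasi-isometric embedding of $\Z^n$ to a uniform family of quasi-isometric embeddings of the balls $B_R^n$, which the paper leaves implicit.
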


\section{Arguments for subsurfaces}

\subsection{Maximal number of witnesses}\label{section:maximal-number-of-witnesses}

Let $\xi$ be a positive integer. 
We will consider a finite collection of simple closed curves on $\Sigma=\Sigma_{g, b}$ along which $\Sigma$ is cut into compact subsurfaces $X_1, \dots, X_m$ such that the complexity $\xi(X_i)$ of each $X_i$ is at least $\xi$. 
We denote by $\mu(g, b, \xi)$ the maximal possible number of subsurfaces appearing in such decompositions. 
In this subsection, we explicitly calculate $\mu(g, b, \xi)$ as follows. 
To the best of the authors' knowledge, no such explicit formula has been given so far.

\begin{theorem}\label{theorem:MaximumNumberForSubsurfaces}
    Unless $b=0$ and $\xi=3g-3$, $\mu(g, b, \xi)$ is equal to 
    \begin{equation}\label{eq:ComplexityFormula}
        \min\left\{\left\lfloor\frac{3g-2+b}{\xi+1}\right\rfloor,\ \left\lfloor\frac{2g-2+b}{\lceil (2\xi+1)/3\rceil}\right\rfloor\right\}. 
    \end{equation}
    For $g\ge 2$, $\mu(g, 0, 3g-3)$ is equal to $1$. 
\end{theorem}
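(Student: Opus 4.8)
The plan is to translate the problem into an integer optimization over the data of a cut, prove matching upper and lower bounds, and dispose of the closed exceptional case by hand.

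First I would record the bookkeeping for a cut. Cutting $\Sigma=\Sigma_{g,b}$ along $N$ disjoint essential curves into pieces $X_i=\Sigma_{g_i,b_i}$, $1\le i\le m$, yields a connected dual graph (one vertex per piece, one edge per cutting curve), so $N\ge m-1$, and one has $\sum_i b_i=b+2N$ and $g=\sum_i g_i+(N-m+1)$; consequently $\sum_i\xi(X_i)=\xi_0-N$ and $\sum_i(-\chi(X_i))=2g-2+b$, where $\xi_0=3g-3+b$ and $\xi(X_i)=3g_i-3+b_i$. Conversely, any genera $g_i\ge0$ with $\sum_i g_i\le g$ for which a boundary vector with $b_i\ge\max(1,\xi+3-3g_i)$ and $\sum_i b_i=b+2(g-\sum_i g_i+m-1)$ exists can be realized by gluing the pieces $\Sigma_{g_i,b_i}$ in a connected pattern; and such a boundary vector exists exactly when $\sum_i\phi(g_i)\le2g-2+b$, where $\phi(h)\coloneqq\max(2h-1,\ \xi+1-h)$. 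So, provided $m\ge2$, the task is to maximize $m$ subject to $\sum_i\phi(g_i)\le2g-2+b$ and $\sum_i g_i\le g$. I would record the three facts I need about $\phi$: it equals $\min\{-\chi(\Sigma_{h,b'}):b'\ge1,\ 3h-3+b'\ge\xi\}$; its minimum over $h\ge0$ is $a(\xi)$, attained at $h^\ast\coloneqq\lfloor(\xi+2)/3\rfloor$; and $\phi(h)=\xi+1-h$ for all $0\le h\le h^\ast$.

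For the upper bound, take a decomposition with $m\ge2$ pieces. From $\xi(X_i)\ge\xi$ and $N\ge m-1$, the identity $\sum\xi(X_i)=\xi_0-N$ gives $m(\xi+1)\le\xi_0+1=3g-2+b$. Since $m\ge2$ forces all $b_i\ge1$, we have $-\chi(X_i)\ge\phi(g_i)\ge a(\xi)$, so $\sum(-\chi(X_i))=2g-2+b$ gives $m\,a(\xi)\le2g-2+b$. Hence $\mu(g,b,\xi)$ is at most the minimum in \eqref{eq:ComplexityFormula}. For the matching lower bound I would set $m$ equal to that minimum and choose all $g_i$ in $\{0,\dots,h^\ast\}$, where $\phi$ is linear of slope $-1$, so $\sum\phi(g_i)=m(\xi+1)-\sum g_i$ and one only wants $\sum g_i$ large. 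If $mh^\ast\le g$, take every $g_i=h^\ast$, so $\sum\phi(g_i)=m\,a(\xi)\le2g-2+b$ since $m\le\lfloor\frac{2g-2+b}{a(\xi)}\rfloor$; if $mh^\ast>g$, spread the genus so each $g_i\in\{\lfloor g/m\rfloor,\lceil g/m\rceil\}\subseteq\{0,\dots,h^\ast\}$ with $\sum g_i=g$, so $\sum\phi(g_i)=m(\xi+1)-g\le2g-2+b$ since $m\le\lfloor\frac{3g-2+b}{\xi+1}\rfloor$. In either case a compatible boundary vector exists, and the realization step gives the decomposition. For the exceptional case $b=0$, $\xi=3g-3$: the first inequality already forces $m\le1$ (there $\xi+1=3g-2=3g-2+b$), and $m=1$ is realized by $X_1=\Sigma$; the stated formula misses this only because that single piece is closed, which is precisely the one situation escaping the hypothesis $b_i\ge1$ behind the second bound, so $\lfloor\frac{2g-2}{a(3g-3)}\rfloor=\lfloor\frac{2g-2}{2g-1}\rfloor=0$ instead of $1$.

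The step I expect to be the main obstacle is realization: showing that, whenever the numerical constraints above are met, the pieces $\Sigma_{g_i,b_i}$ can actually be glued along their boundaries into a connected copy of $\Sigma_{g,b}$. The delicate points are the pieces with $b_i=1$, which must be leaves of the dual graph, and, when $N>m-1$, the placement of the extra gluings needed to build up the first Betti number; I would handle this by first laying out a spanning path (or caterpillar) through the pieces and then attaching the remaining gluings at pieces with spare boundary components, as loops if necessary. A secondary nuisance is the floor bookkeeping in the two-case construction, which must be checked to genuinely attain $\min$ of the two floors for every residue of $\xi$ modulo $3$.
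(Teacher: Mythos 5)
Your upper bounds are the same Euler-characteristic estimates as the paper's (your $m(\xi+1)\le 3g-2+b$ via $\sum_i\xi(X_i)=\xi_0-N$ and $N\ge m-1$ is an equivalent repackaging of the paper's $\chi(X_i)\le -\xi-1+g_i$ computation, and your $-\chi(X_i)\ge\phi(g_i)\ge\lceil(2\xi+1)/3\rceil$ is exactly the paper's second bound), but your lower bound is genuinely different: the paper proceeds by induction on $\xi(\Sigma)$ with a case analysis on $\xi\bmod 3$ and explicit ``cut off one subsurface of complexity exactly $\xi$'' decompositions (with several sub-subcases and figures), whereas you solve the problem in one shot by choosing the genera $g_i$ in the range where $\phi$ is linear, checking the two numerical constraints against whichever floor realizes the minimum, and then invoking a realization lemma that assembles the abstract pieces $\Sigma_{g_i,b_i}$ into $\Sigma_{g,b}$ via a connected gluing multigraph. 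This is arguably cleaner and more uniform than the paper's induction; the price is the realization lemma you flag, which is genuinely the crux but is fillable exactly as you sketch (a spanning tree with $\deg(i)\le b_i$ exists because $\sum b_i=b+2N\ge 2m-2$ with all $b_i\ge1$; the remaining $N-m+1$ gluings fit because the leftover capacity is $b+2(N-m+1)$; all cutting curves are automatically essential and pairwise non-isotopic since every piece has $\xi(X_i)\ge\xi\ge1$, hence negative Euler characteristic). Two small points you should make explicit. First, since your second upper bound only applies when $m\ge2$, the equality for $\mu=1$ requires checking that the formula is at least $1$ whenever $\xi(\Sigma)\ge\xi$ outside the exceptional case; this is true (for $b\ge1$ apply $\phi$ to $\Sigma$ itself, and for $b=0$, $\xi\le 3g-4$ one has $\lceil(2\xi+1)/3\rceil\le 2g-2$), and the paper devotes a displayed computation to precisely this verification, but you only assert it. Second, in your Case B you need $\lceil g/m\rceil\le h^\ast$, which does follow from $mh^\ast>g$ but deserves a line. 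Neither is a gap in the idea, only in the write-up.
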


\begin{proof}
    Since the claim is trivial for $\xi(\Sigma)<\xi$, we assume $\xi(\Sigma)\ge \xi$ below. In particular $\mu(g, b, \xi)$ is at least one. 

    By cutting along some simple closed curves on $\Sigma$, we obtain compact subsurfaces $X_1, \dots, X_{\mu(g, b, \xi)}$ whose complexities are at least $\xi$. Denote the genus and the number of boundary components of $X_i$ by $g_i$ and $b_i$, respectively. Since the Euler characteristic of $X_i$ satisfies $\chi(X_i)=2-2g_i-b_i=-\xi(X_i)-1+g_i\le -\xi-1+g_i$, we have
    \begin{align*}
        2-2g-b
        &=\sum_{i=1}^{\mu(g, b, \xi)} \chi(X_i)\\
        &\le -(\xi+1)\mu(g, b, \xi)+\sum_{i=1}^{\mu(g, b, \xi)} g_i\\
        &\le -(\xi+1)\mu(g, b, \xi)+g. 
    \end{align*}
    Therefore, $\mu(g, b, \xi)$ is at most $\lfloor(3g-2+b)/(\xi+1)\rfloor$. 

    On the other hand, each $b_i$ is positive unless $b=0$ and $\mu(g, 0, \xi)\le 1$. Using $b_i\ge 1$ together with $3g_i-3+b_i=\xi(X_i)\ge \xi$ and $\chi(X_i)=2-2g_i-b_i$, we obtain
    \begin{equation*}
        \xi+1+\chi(X_i)\le g_i \le \frac{-\chi(X_i)+1}{2}. 
    \end{equation*}
     In particular, the rightmost side is greater than or equal to the leftmost side, i.e., $\chi(X_i)\le -(2\xi+1)/3$. Since $\chi(X_i)$ is an integer, $\chi(X_i)\le-\lceil (2\xi+1)/3\rceil$. Thus it follows that
    \begin{equation*}
        -\mu(g, b, \xi)\left\lceil \frac{2\xi+1}{3}\right\rceil\ge \sum_{i=1}^{\mu(g, b, \xi)} \chi(X_i)=2-2g-b, 
    \end{equation*}
    i.e., $\mu(g, b, \xi)$ is at most $\lfloor(2g-2+b)/\lceil (2\xi+1)/3\rceil\rfloor$. 

    In the case of $b=0$ and $\xi<3g-3$, let $r\in \{0, 1, 2\}$ be the integer such that $\xi \equiv r \mod 3$. Then $\xi+3-r$ is at most $3g-3$, and we have 
    \begin{align*}
        \left\lfloor\frac{2g-2}{\lceil (2\xi+1)/3\rceil}\right\rfloor
        &= \left\lfloor\frac{2(3g-3)}{3\lceil (2(\xi-r)+2r+1)/3\rceil}\right\rfloor \\
        &\ge \left\lfloor\frac{2(\xi-r+3)}{2(\xi-r)+3\lceil (2r+1)/3\rceil}\right\rfloor \\
        &\ge 1. 
    \end{align*}
    Thus \cref{eq:ComplexityFormula} is at least one. 
    
    The arguments so far prove that $\mu(g, b, \xi)$ admits an upper bound given by \cref{eq:ComplexityFormula} unless $b=0$ and $\xi=3g-3$. We need to see that the quantity \cref{eq:ComplexityFormula} is also a lower bound for $\mu(g, b, \xi)$. 
    
    First, note that there exists a separating simple closed curve along which the surface $\Sigma$ is cut into two subsurfaces
    \begin{equation}\label{eq:LowerGenusDecomp}
        X_0=\Sigma_{g, \xi-3g+3} \quad \text{and} \quad Y_0=\Sigma_{0, b-\xi+3g-1}
    \end{equation}
    if $3g-3<\xi$. The complexity of $X_0$ is equal to $\xi$. In addition, we note that $\lfloor (3g-2+b)/(\xi+1) \rfloor$ can be rewritten as $\lfloor (\xi(\Sigma)+1)/(\xi+1)\rfloor$. 
    
    \textbf{Case 1}. 
    Suppose $\xi\equiv 1\mod 3$. We will prove $\mu(g, b, \xi)\ge\lfloor(3g-2+b)/(\xi+1)\rfloor$ by induction on $\xi(\Sigma)=3g-3+b$. There is a simple closed curve on $\Sigma$ which cuts $\Sigma$ into $X_1$ and $Y_1$ with $\xi(X_1)=\xi$. Indeed, one can take $X_1=\Sigma_{(\xi+2)/3, 1}$ and $Y_1=\Sigma_{g-(\xi+2)/3, b+1}$ when $g\ge (\xi+2)/3$ as in \Cref{fig:mod1decomp}, whereas we set $X_1=X_0$ and $Y_1=Y_0$ in the decomposition \cref{eq:LowerGenusDecomp} when $g<(\xi+2)/3$. The complexity of $Y_1$ is equal to $\xi(\Sigma)-\xi-1$ in either case. 
    \begin{figure}[t]
        \vspace{3mm}
        \centering
        \begin{overpic}[scale=0.8]{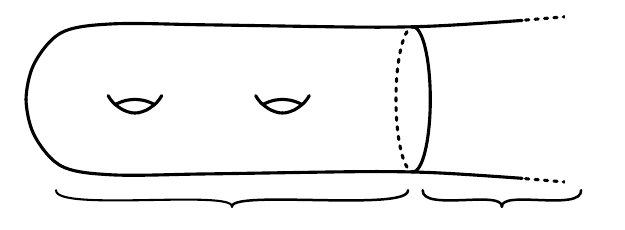}
            \put(23,-1){$X_1=\Sigma_{(\xi+2)/3,1}$}
            \put(65,-1){$Y_1=\Sigma_{g-(\xi+2)/3, b+1}$}
        \end{overpic}
        \caption{}
        \label{fig:mod1decomp}
    \end{figure}
    By the induction hypothesis for $Y_1$, we obtain
    \begin{align}\label{eq:ComplexityInduction}
        \begin{split}
            \mu(g, b, \xi)
            &\ge 1+\left\lfloor\frac{(\xi(\Sigma)-\xi-1)+1}{\xi+1}\right\rfloor\\
            &=\left\lfloor\frac{\xi(\Sigma)+1}{\xi+1}\right\rfloor
            =\left\lfloor\frac{3g-2+b}{\xi+1}\right\rfloor. 
        \end{split}
    \end{align}

    \textbf{Case 2}. Suppose $\xi\equiv 2 \mod 3$. First, we show $\mu(g, b, \xi)\ge \lfloor(3g-2+b)/(\xi+1)\rfloor$ for $b>0$, by induction on $\xi(\Sigma)$. We can take a simple closed curve on $\Sigma$ so that the surface $\Sigma$ is cut into two subsurfaces $X_2$ and $Y_2$ with $\xi(X_2)=\xi$. Indeed, one can take $X_2=\Sigma_{(\xi+1)/3, 2}$ and $Y_2=\Sigma_{g-(\xi+1)/3, b}$ when $g\ge (\xi+1)/3$ as in \Cref{fig:mod2decomp}, whereas we set $X_2=X_0$ and $Y_2=Y_0$ in the decomposition \cref{eq:LowerGenusDecomp} when $g<(\xi+1)/3$. The complexity of $Y_2$ is equal to $\xi(\Sigma)-\xi-1$ in either case. 
    \begin{figure}[t]
        \vspace{3mm}
        \centering
        \begin{overpic}[scale=0.8]{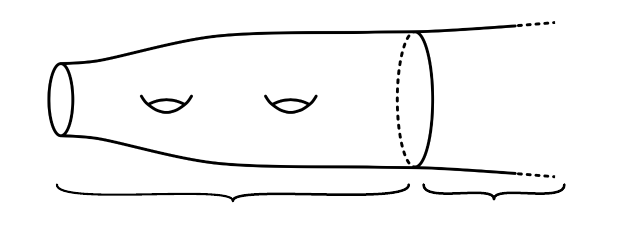}
            \put(23,0){$X_2=\Sigma_{(\xi+1)/3, 2}$}
            \put(65,0){$Y_2=\Sigma_{g-(\xi+1)/3, b}$}
        \end{overpic}
        \caption{}
        \label{fig:mod2decomp}
    \end{figure}
    Hence, the same inequality as \cref{eq:ComplexityInduction} holds by the induction hypothesis for $Y_2$, and we have $\mu(g, b, \xi)\ge \lfloor(3g-2+b)/(\xi+1)\rfloor$. 
    
    If $b=0$, we obtain $\Sigma_{g-1, 2}$ by cutting $\Sigma$ along a non-separating simple closed curve. Therefore
    \begin{align*}
        \mu(g, 0, \xi)
        &\ge \mu\left(g-1, 2, \xi\right)\\
        &\ge \left\lfloor\frac{3(g-1)-2+2}{\xi+1}\right\rfloor\\
        &=\left\lfloor\frac{3g-3}{\xi+1}\right\rfloor
        =\left\lfloor\frac{2g-2}{\lceil (2\xi+1)/3\rceil}\right\rfloor. 
    \end{align*}

    \textbf{Case 3}. Suppose $\xi\equiv 0\mod 3$. If $b\ge 2$, we can take a simple closed curve on $\Sigma$ which cuts $\Sigma$ into two subsurfaces
    \begin{equation}\label{eq:Mod3Decomp-i}
        X_3 \quad \text{and} \quad Y_3
    \end{equation}
    with $\xi(X_3)=\xi$. Indeed, one can take $X_3=\Sigma_{\xi/3, 3}$ and $Y_3=\Sigma_{g-\xi/3, b-1}$ when $g\ge \xi/3$ as in the left figure of \Cref{fig:mod3decomp}; whereas we set $X_3=X_0$ and $Y_3=Y_0$ in the decomposition \cref{eq:LowerGenusDecomp} when $g<\xi/3$. The complexity of $Y_3$ is equal to $\xi(\Sigma)-\xi-1$ in either case. 
    
    For $b=1$, one can take two simple closed curves that cut $\Sigma$ into
    \begin{equation}\label{eq:Mod3Decomp-ii}
        X_3'=\Sigma_{\xi/3, 3} \quad \text{and} \quad Y_3'=\Sigma_{g-1-\xi/3, 2}
    \end{equation}
    as in the right figure of \Cref{fig:mod3decomp}. 
    \begin{figure}[t]
        \centering
        \begin{overpic}[scale=0.6]{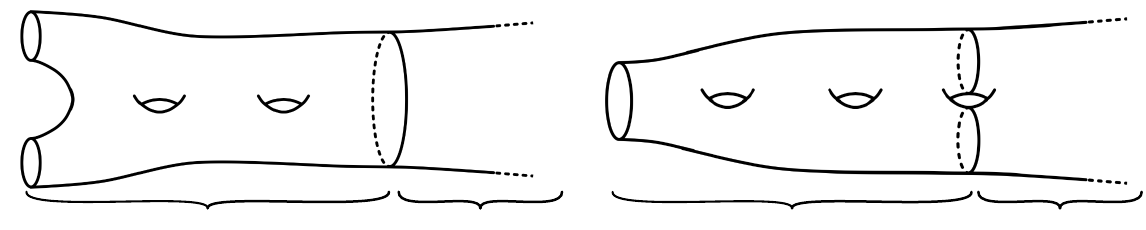}
            \put(12,-2){$X_3=\Sigma_{\xi/3,3}$}
            \put(34,-2){$Y_3=\Sigma_{g-\xi/3, b-1}$}
            \put(62,-2){$X_3'=\Sigma_{\xi/3, 3}$}
            \put(84,-2){$Y_3'=\Sigma_{g-1-\xi/3, 2}$}
        \end{overpic}
        \vspace{5mm}
        \caption{The left and right figures represent the decompositions \Cref{eq:Mod3Decomp-i} and \Cref{eq:Mod3Decomp-ii}, respectively.}
        \label{fig:mod3decomp}
    \end{figure}
    
    (3-I) We will show $\mu(g, b, \xi)\ge \lfloor(3g-2+b)/(\xi+1)\rfloor$ for $b>\lfloor 3g/\xi \rfloor$ by induction on $\xi(\Sigma)$. In this case, $b$ is at least $2$ due to $b>\lfloor 3g/\xi\rfloor$ and $\xi(\Sigma)\ge \xi$, and hence the decomposition \cref{eq:Mod3Decomp-i} holds. Denote the genus and the number of boundary components of $Y_3$ by $g'$ and $b'$, respectively. Then the subsurface $Y_3$ still satisfies $b'>\lfloor 3g'/\xi\rfloor$ by $b-1>\lfloor 3(g-\xi/3)/\xi\rfloor$. Thus, it follows that $\mu(g, b, \xi)\ge \lfloor(3g-2+b)/(\xi+1)\rfloor$ by the same calculation as \cref{eq:ComplexityInduction}. 

    (3-II) Next, we consider the case of $0<b\le \lfloor 3g/\xi\rfloor$ and show $\mu(g, b, \xi)=\lfloor(2g-2)/\lceil (2\xi+1)/3\rceil\rfloor$ by induction on $\xi(\Sigma)$. The situation is further divided into three cases according to the number of boundary components. 
    
    (3-II-i) For $2\le b\le \lfloor(2g-2)/\lceil (2\xi+1)/3\rceil\rfloor$, using the decomposition \cref{eq:Mod3Decomp-i}, we obtain
    \begin{align*}
        \mu(g, b, \xi)
        &\ge 1+\mu\left(g-\frac{\xi}{3}, b-1, \xi\right)\\
        &\ge 1+\left\lfloor\frac{2\left(g-\frac{\xi}{3}\right)-2+(b-1)}{\lceil (2\xi+1)/3\rceil}\right\rfloor\\
        &=\left\lfloor\frac{2g-2+b}{\lceil (2\xi+1)/3\rceil}\right\rfloor. 
    \end{align*}
    
    (3-II-ii) Consider the case of $b=1$ and $2\le \lfloor 3(g-1-\xi/3)/\xi\rfloor$. Then we have the decomposition \cref{eq:Mod3Decomp-ii}. The assumption $2\le \lfloor 3(g-1-\xi/3)/\xi\rfloor$ enables us to apply the induction hypothesis to $Y_3'$, and we obtain
    \begin{align*}
        \mu(g, 1, \xi)
        &\ge 1+\mu\left(g-\frac{\xi}{3}-1, 2, \xi\right)\\
        &\ge 1+\left\lfloor\frac{2\left(g-\frac{\xi}{3}-1\right)-2+2}{\lceil (2\xi+1)/3\rceil}\right\rfloor\\
        &=\left\lfloor\frac{2g-2+1}{\lceil (2\xi+1)/3\rceil}\right\rfloor. 
    \end{align*}
    
    (3-II-iii) Consider the remaining case of $b=1$ and $2> \lfloor 3(g-1-\xi/3)/\xi\rfloor$. Then $g-1$ is less than $\xi$, i.e., $g\le \xi$. In particular $\lfloor(2g-2+1)/\lceil (2\xi+1)/3\rceil\rfloor$ is at most $\lfloor(2\xi-2+1)/\lceil (2\xi+1)/3\rceil\rfloor<3$. If $\lfloor(2g-2+1)/\lceil (2\xi+1)/3\rceil\rfloor$ is equal to $2$, then $2g-1\ge 2(2\xi/3+1)$, i.e., $g\ge 2\xi/3+2$. Thus we have $\xi(\Sigma_{g-1-\xi/3, 2})=3g-\xi-4>\xi$, and the decomposition \cref{eq:Mod3Decomp-ii} guarantees $\mu(g, 1, \xi)\ge 2$. 

    (3-III) Finally, in the case of $b=0$, cutting $\Sigma$ along a non-separating simple closed curve yields $\Sigma_{g-1, 2}$. Therefore we can conclude that
    \begin{align*}
        \mu(g, 0, \xi)
        &\ge \mu(g-1, 2, \xi)\\
        &\ge \min\left\{\left\lfloor\frac{3(g-1)-2+2}{\xi+1}\right\rfloor,\ \left\lfloor\frac{2(g-1)-2+2}{\lceil (2\xi+1)/3\rceil}\right\rfloor\right\}\\
        &=\left\lfloor\frac{2g-2}{\lceil (2\xi+1)/3\rceil}\right\rfloor. 
    \end{align*}

    Combining the above results, we obtain the claim. 
\end{proof}

By setting $m(g,b,k)=\mu(g,b,3g-2+b+k)$, we have \Cref{theorem:B}.

\subsection{Relatively hyperbolic case}

In this subsection, we discuss the cases in which the $k$-multicurve graph $\multicurve{k}(\Sigma_{g,b})$ is relatively hyperbolic.

\begin{theorem}\label{theorem:relative-hyperbolicity}
    The $k$-multicurve graph $\multicurve{k}(\Sigma_{g,b})$ is relatively hyperbolic exactly for the values of $(g,b,k)$ shown in \Cref{table:list-for-case-of-rel-hyperbolic}.
    \begin{table}[htbp]
        \centering
        \begin{tabular}{c|c|c}
            \hline
            $g$ & $b$ & $k$ \\ \hline
            even & even ($\ge 2$) & $(3g+b)/2$ \\
            even & $0$ & $3g/2,\ (3g+2)/2$ \\
            odd & $0,\,2$ & $(3g+3)/2$ \\
            odd & odd ($\ge 3$) & $(3g+b)/2$ \\
            \hline 
            \end{tabular} \vspace{2mm}
            \caption{Values of $(g,b,k)$ for which $\multicurve{k}(\Sigma_{g,b})$ is relatively hyperbolic.}
            \label{table:list-for-case-of-rel-hyperbolic}
    \end{table}
\end{theorem}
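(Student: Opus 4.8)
The strategy is to combine the classification of twist-free graphs of multicurves in Theorem \ref{theorem:classification-hyperbolic-relhyperbolic-thick}(2) with the explicit witness characterization in Proposition \ref{proposition:witness-for-k-multicurve} and the counting formula in Theorem \ref{theorem:MaximumNumberForSubsurfaces}. By Proposition \ref{proposition:witness-for-k-multicurve}, a connected essential subsurface $W\subset\Sigma$ is a witness for $\multicurve{k}(\Sigma)$ if and only if $\xi(W)\ge \xi_0-k+1$, where $\xi_0=3g-3+b$. Set $\xi\coloneqq\xi_0-k$ for brevity, so witnesses are exactly the connected subsurfaces of complexity at least $\xi+1$. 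According to Theorem \ref{theorem:classification-hyperbolic-relhyperbolic-thick}(2), $\multicurve{k}(\Sigma)$ is relatively hyperbolic precisely when (a) there exist two disjoint witnesses (equivalently $m(g,b,k)\ge 2$, by Theorem \ref{theorem:quasi-flat-embedding-rank-nu} together with Theorem \ref{theorem:upper-bound-quasiflat-rank}), and (b) whenever connected witnesses $Z,W$ are disjoint and co-connected, then $\Sigma\setminus Z = W$ (i.e.\ the pair fills $\Sigma$ with no room left over). So the plan is to translate (a) and (b) into arithmetic conditions on $(g,b,\xi)$ and read off exactly when both hold.

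First I would dispose of condition (a): $m(g,b,k)\ge 2$ is governed by the formula \eqref{eq:quasi-flat-rank-for-k-multicurve-graph}, so $m\ge 2$ forces both $\lfloor (3g-2+b)/(\xi+1)\rfloor\ge 2$ and $\lfloor (2g-2+b)/a(\xi)\rfloor\ge 2$ (excluding the exceptional $(b,k)=(0,1)$ case, where $m=1$ and the graph is hyperbolic). Condition (b) is the delicate one, and is where most of the work lies. Note that if $Z$ is a connected witness that is also co-connected, then $W\coloneqq\Sigma\setminus Z$ is connected; for $Z,W$ to be disjoint witnesses we need $\xi(Z)\ge\xi+1$ and $\xi(W)\ge\xi+1$, and since cutting along the (multi)curve $\partial Z$ loses at least the curves of $\partial Z$ themselves, $\xi(Z)+\xi(W)\le \xi_0 - |\partial Z|_{\mathrm{ess}} \le \xi_0-1$ in general but can be as large as $\xi_0$ when $\partial Z$ is a single nonseparating curve, or $\xi_0-1$ when $\partial Z$ is a single separating curve. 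Condition (b) says: there is \emph{no} way to place two disjoint witnesses $Z,W$ that are co-connected with a curve left over, i.e.\ every "extra" pairwise-disjoint witness is excluded. Concretely, I expect this to reduce to the statement that $\lfloor(3g-2+b)/(\xi+1)\rfloor\le 2$, OR, if that floor equals $3$, that the tighter Euler-characteristic bound $\lfloor(2g-2+b)/a(\xi)\rfloor$ already caps $m$ at $2$ and moreover the only extremal configuration is the "no leftover" one. This is where the genus/parity and $b$-parity dichotomies in Table \ref{table:list-for-case-of-rel-hyperbolic} come from: the arithmetic of $a(\xi)=\lceil(2\xi+1)/3\rceil$ against $2g-2+b$ is sensitive to $\xi\bmod 3$, and the translation $\xi=\xi_0-k=3g-3+b-k$ converts $\xi\bmod 3$ into a condition on $k$ and on $g+b\bmod 2$, yielding $k=(3g+b)/2$ in the generic rows and the small sporadic values $3g/2$, $(3g+2)/2$, $(3g+3)/2$ when $b\in\{0,2\}$.

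Then I would organize the verification case-by-case exactly along the four rows of Table \ref{table:list-for-case-of-rel-hyperbolic}: for each candidate $(g,b,k)$, (i) exhibit an explicit separating curve cutting $\Sigma$ into two disjoint co-connected witnesses $Z\sqcup W$ with $\Sigma\setminus Z=W$ — using the same kinds of genus/boundary decompositions as in the proof of Theorem \ref{theorem:MaximumNumberForSubsurfaces} (e.g.\ $\Sigma_{g/2,1}$-type and $\Sigma_{0,\cdot}$-type pieces when $b=0$, and $\Sigma_{g/2,b/2+?}$-type pieces when $b>0$), which gives relative hyperbolicity provided no third disjoint witness can coexist; (ii) verify $m(g,b,k)=2$ via \eqref{eq:quasi-flat-rank-for-k-multicurve-graph} to rule out a leftover witness; and (iii) conversely, for any $(g,b,k)$ \emph{not} in the table with $m\ge 2$, produce either three pairwise-disjoint witnesses (when $m\ge 3$, immediate from Theorem \ref{theorem:MaximumNumberForSubsurfaces} and its constructions) or two disjoint co-connected witnesses $Z,W$ with $\Sigma\setminus Z\ne W$ (a witness $Z$ together with a \emph{strictly smaller} co-connected complement that is still a witness plus a leftover curve) — this violates condition (b). Steps (i) and (iii) rely on the same surface-surgery toolkit already developed in Section \ref{section:maximal-number-of-witnesses}. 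The main obstacle I anticipate is step (iii) in the borderline subcase where $\lfloor(3g-2+b)/(\xi+1)\rfloor=3$ but the second floor forces $m=2$: here one must rule out the possibility of two disjoint co-connected witnesses together with a leftover curve, which is not purely a counting matter but requires checking that the extremal-complexity configurations are forced to tile $\Sigma$ exactly; this is precisely the arithmetic that singles out the parity conditions and the sporadic small-$b$ values, and carrying it out cleanly will require a careful analysis of when equality can hold in the Euler-characteristic inequalities $\chi(X_i)\le -\lceil(2\xi+1)/3\rceil$ used in the proof of Theorem \ref{theorem:MaximumNumberForSubsurfaces}.
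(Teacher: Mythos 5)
Your overall setup matches the paper: use Proposition \ref{proposition:witness-for-k-multicurve} to characterize witnesses by a complexity threshold, then feed this into the Russell--Vokes criterion (Theorem \ref{theorem:classification-hyperbolic-relhyperbolic-thick}(2)), and determine arithmetically for which thresholds conditions (A) and (B) both hold. However, your plan has a genuine gap at the heart of step (iii). You assert that when $m(g,b,k)\ge 3$, producing three pairwise disjoint witnesses ``immediately'' violates condition (B). This is not immediate: condition (B) is a statement about pairs of disjoint witnesses $Z,W$ that are each \emph{connected} and \emph{co-connected}, with the conclusion $\Sigma\setminus Z = W$. Three pairwise disjoint connected witnesses need not be co-connected, and enlarging one to a co-connected witness can absorb the others, so extracting from $m\ge 3$ a pair $(Z,W)$ that actually witnesses the failure of (B) requires an argument you have not supplied. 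The paper sidesteps this entirely: it never invokes $m\ge 3$; instead, in every case not on the list (with $m\ge 2$) it constructs by hand two disjoint co-connected witnesses that do \emph{not} exhaust $\Sigma$, by gluing two explicit pieces $Y$ and $Z$ to a small connecting piece (a pair of pants, a four-holed sphere, or a twice-holed torus) left over in between. You would need either to reproduce such constructions or to prove the ``$m\ge 3 \Rightarrow$ (B) fails'' lemma separately.

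The second, related issue is that your expectation that condition (B) ``reduces'' to the floor formula $\lfloor(3g-2+b)/(\xi+1)\rfloor$ being $\le 2$ (plus an extremal-tiling argument) is not how the verification actually works and is not obviously correct. For the affirmative direction the paper needs a sharper, non-counting ingredient: given a connected co-connected witness $Y$ at the threshold, one bounds $\xi(Y^c)$ not just via Lemma \ref{lemma:superadditivity-for-complexity} but via an improvement using the number of boundary components of $Y$, which is controlled by $\xi(Y)\bmod 3$ (e.g.\ $\xi(Y)\equiv 0$ forces $|\partial Y|\ge 3$, giving $\xi(\Sigma)\ge \xi(Y)+\xi(Y^c)+3$). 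It is precisely this interplay --- not the floor formula --- that forces $\Sigma\setminus Y = Z$ in the borderline cases and produces the $b\in\{0,2\}$ sporadics. Your plan acknowledges this is ``where most of the work lies'' but does not supply a mechanism. Finally, a small but concrete slip: with your $\xi=\xi_0-k$ the witness threshold is $\xi+1$, so $m(g,b,k)=\mu(g,b,\xi+1)$ and the relevant floors are $\lfloor(3g-2+b)/(\xi+2)\rfloor$ and $\lfloor(2g-2+b)/a(\xi+1)\rfloor$, not $\lfloor(3g-2+b)/(\xi+1)\rfloor$ and $\lfloor(2g-2+b)/a(\xi)\rfloor$ as you wrote.
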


First, we prepare two elementary lemmas that will be used later.
Note that the complexity $\xi(S)$ of an essential subsurface $S\subset \Sigma$ is equal to the number of curves in a pants decomposition for $S$.

\begin{lemma}\label{lemma:maximal-complexity-subsurface}
    If $X\subset \Sigma$ is an essential subsurface with $\xi(X)=\xi(\Sigma)$, then $X$ is isotopic to $\Sigma$.
\end{lemma}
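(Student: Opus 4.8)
The plan is to compare Euler characteristics. Write $Y=\overline{\Sigma\setminus X}$ for the complementary subsurface; if $Y=\varnothing$ then $X=\Sigma$ and there is nothing to prove, so assume $Y\neq\varnothing$ (in particular $X$ is a proper subsurface and $\xi(\Sigma)\ge 1$, the low-complexity surfaces having no proper essential subsurfaces at all). Let $X_1,\dots,X_N$ be the connected components of $X$, with $X_i$ of genus $g_i$ and $b_i$ boundary components, and put $G=\sum_i g_i$, $B=\sum_i b_i$. Then $\chi(X)=2N-2G-B$, while by definition of complexity $\xi(X)=\sum_i(3g_i-3+b_i)=3G-3N+B$. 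Since $\Sigma=X\cup Y$ with $X\cap Y$ a disjoint union of circles, we have $\chi(Y)=\chi(\Sigma)-\chi(X)$. The hypothesis $\xi(X)=\xi(\Sigma)=3g-3+b$ gives $B=3g-3+b-3G+3N$, and substituting this into the Euler characteristic identity yields, after simplification,
\[
\chi(Y)=g-G+N-1 .
\]

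Next I would bound the two sides of this identity. Since $X$ is essential, $Y$ has no disk component: the boundary circle of such a disk would be a component of $\partial X$ bounding a disk in $\Sigma$, hence null-homotopic, so neither essential in $\Sigma$ nor isotopic to a component of $\partial\Sigma$ — contradicting essentiality of $X$. Likewise $Y$ has no sphere component ($\Sigma$ is connected and $X\neq\varnothing$). Hence every component of $Y$ has non-positive Euler characteristic, so $\chi(Y)\le 0$. On the other side, $N\ge 1$, and since cutting a compact surface along simple closed curves never increases the total genus, the disjoint essential subsurfaces $X_1,\dots,X_N$ satisfy $G=\sum_i g_i\le g$. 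Plugging $G\le g$ and $N\ge 1$ into the displayed identity forces $\chi(Y)=0$, $N=1$, and $G=g$. Thus $X$ is connected of genus $g$, and then $B=b$, i.e.\ $X$ has exactly $b$ boundary components, the same number as $\Sigma$.

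Finally I would identify $Y$ explicitly. From $\chi(Y)=0$ and the absence of disk and sphere components, every component of $Y$ is an annulus. Now count boundary circles: $\partial X$ and $\partial\Sigma$ each consist of $b$ circles, and a circle of $\partial X$ is either a circle of $\partial\Sigma$ or a boundary circle of the unique adjacent component of $Y$, and symmetrically for circles of $\partial\Sigma$. As each component of $Y$ is an annulus with exactly two boundary circles, the circles of $\partial X$ not lying in $\partial\Sigma$ are matched bijectively with the circles of $\partial\Sigma$ not lying in $\partial X$, one annulus of $Y$ per matched pair; no annulus of $Y$ can join two circles of $\partial X$, and none can join two circles of $\partial\Sigma$ (that annulus would be a closed-off component of the connected surface $\Sigma$). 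Hence every component of $Y$ is a collar of a boundary component of $\Sigma$, so $\Sigma$ is obtained from $X$ by attaching collars to its boundary, and therefore $X$ is isotopic to $\Sigma$. The Euler characteristic bookkeeping and the boundary-circle counting are routine; the only points needing a little care are that essentiality genuinely excludes disk components of $Y$ and that, combined with $G\le g$, this pins down $N=1$ and the exact form of $Y$.
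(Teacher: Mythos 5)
The paper states this lemma without proof, as one of two ``elementary lemmas,'' so there is no argument of the authors' to compare yours against; judged on its own, your Euler-characteristic proof is correct and essentially complete. The identity $\chi(Y)=g-G+N-1$, the bound $\chi(Y)\le 0$ (no disk or sphere components of $Y$, by essentiality of $X$ and connectedness of $\Sigma$), and the bounds $G\le g$, $N\ge 1$ do force $N=1$, $G=g$, $\chi(Y)=0$, and the collar analysis then finishes the job. Two small points deserve a line each. First, the formula $\xi(X)=3G-3N+B$ presumes $\xi(X_i)=3g_i-3+b_i$ for every component; under the paper's definition of complexity as the maximal number of disjoint essential curves, an annular component of $X$ has complexity $0$, not $-1$, so you should either adopt the convention $\xi(\mathrm{annulus})=-1$ or first discard annular components (they contribute nothing to $\xi(X)$, and running your argument on the remainder shows the remainder is isotopic to $\Sigma$, which is incompatible with a leftover disjoint essential annulus). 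Second, your claim that no annulus of $Y$ joins two circles of $\partial X$ is asserted rather than proved; it does follow from your count (with $s$ circles of $\partial X$ lying on $\partial\Sigma$, the $b-s$ annuli of $Y$ have only $b-s$ of their $2(b-s)$ boundary circles available on the $\partial X$ side, so a $\partial X$--$\partial X$ annulus would force a $\partial\Sigma$--$\partial\Sigma$ one, which you have excluded), or more directly from your genus bound: gluing such an annulus onto the connected genus-$g$ surface $X$ would yield a genus-$(g+1)$ subsurface of $\Sigma_{g,b}$. With those two sentences added, the proof is airtight.
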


\begin{lemma}\label{lemma:superadditivity-for-complexity}
    If $Y,Z \subset \Sigma$ are disjoint essential proper subsurfaces, then $\xi(\Sigma)\geq \xi(Y)+\xi(Z)+1$.
\end{lemma}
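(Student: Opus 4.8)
The plan is to realize $Y$ and $Z$ disjointly, extend the multicurve consisting of the curves of $\partial Y\cup\partial Z$ that are essential in $\Sigma$ to a pants decomposition $P$ of $\Sigma$, and then count the $\xi(\Sigma)=|P|$ curves of $P$ according to which piece of the cut-up surface they lie in. The curves of $P$ contained in $\operatorname{int}(Y)$ form a pants decomposition of $Y$, so there are exactly $\xi(Y)$ of them; likewise exactly $\xi(Z)$ curves of $P$ lie in $\operatorname{int}(Z)$; and these two families are disjoint since $Y\cap Z=\varnothing$. Hence $\xi(\Sigma)\ge\xi(Y)+\xi(Z)$, and the whole point is to exhibit one further curve of $P$ not yet accounted for — namely a component of $\partial Y$ (or of $\partial Z$) that is essential in $\Sigma$, since such a curve belongs to $P$ but lies in neither $\operatorname{int}(Y)$ nor $\operatorname{int}(Z)$.

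So the content reduces to showing that \emph{at least one boundary curve of $Y$ or of $Z$ is essential (i.e.\ non-peripheral) in $\Sigma$}. First I would dispose of the degenerate case: if $\xi(Y)=0$ (and symmetrically if $\xi(Z)=0$), then since $Z$ is an essential proper subsurface, \Cref{lemma:maximal-complexity-subsurface} together with the elementary bound $\xi(Z)\le\xi(\Sigma)$ gives $\xi(Z)<\xi(\Sigma)$, hence $\xi(\Sigma)\ge\xi(Z)+1=\xi(Y)+\xi(Z)+1$ with nothing more to prove. Thus we may assume $\xi(Y),\xi(Z)\ge 1$, so neither $Y$ nor $Z$ is an annulus (or a pair of pants); one may also first enlarge $Y$ by absorbing any annular components of $\Sigma\setminus Y$ lying between parallel boundary curves (this does not decrease complexities, keeps $Y$ disjoint from $Z$, and makes $\partial Y\cup\partial Z$ genuinely a multicurve). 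Now if every component of $\partial Y$ were peripheral in $\Sigma$, each such component would cut off an annular collar of a boundary component of $\Sigma$ on the side disjoint from $Y$, and these collars together with $Y$ would fill $\Sigma$, forcing $Y$ to be isotopic to $\Sigma$ — contradicting that $Y$ is proper. Hence $\partial Y$ has a component $c$ essential in $\Sigma$; since $Z$ is disjoint from $Y$ it is disjoint from $c$ too, so $c\in P$ is the desired extra curve and $\xi(\Sigma)=|P|\ge\xi(Y)+\xi(Z)+1$.

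I expect the main — and only genuinely non-formal — obstacle to be the topological step ``all boundary components of $Y$ peripheral $\Rightarrow$ $Y$ isotopic to $\Sigma$,'' together with the minor bookkeeping ensuring $\partial Y\cup\partial Z$ behaves like a multicurve; the collar-annulus example shows this step really does use the hypothesis $\xi(Y)\ge 1$, which is exactly why the complexity-$0$ case is peeled off at the start. Everything else is standard: that a pants decomposition of an essential subsurface extending its essential boundary has the expected number of curves, and that complexity is monotone under inclusion of essential subsurfaces. An essentially equivalent route is to write $\xi(\Sigma)=\xi(Y)+\xi\!\left(\overline{\Sigma\setminus Y}\right)+(\text{number of curves of }\partial Y\text{ essential in }\Sigma)$, combine with $\xi\!\left(\overline{\Sigma\setminus Y}\right)\ge\xi(Z)$ since $Z\subseteq\overline{\Sigma\setminus Y}$, and then argue as above that the last summand is at least $1$.
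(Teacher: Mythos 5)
The paper does not actually give a proof of this statement: \Cref{lemma:maximal-complexity-subsurface} and \Cref{lemma:superadditivity-for-complexity} are introduced together as ``two elementary lemmas'' and stated without argument. So there is no official proof to compare against, and the question is simply whether your argument is sound.

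It is. The pants-decomposition count is the natural way to read off the inequality: after extending the essential curves of $\partial Y\cup\partial Z$ to a pants decomposition $P$ of $\Sigma$, the curves of $P$ in $\operatorname{int}(Y)$ give exactly $\xi(Y)$ pairwise non-isotopic curves, those in $\operatorname{int}(Z)$ give $\xi(Z)$ more, and the two families are disjoint and mutually non-isotopic (an isotopy between a pants curve of $Y$ and anything outside $Y$ would have to cross $\partial Y$, making that pants curve parallel to a boundary curve of $Y$, which is impossible). You correctly isolate the one real point: some component of $\partial Y$ or $\partial Z$ must be essential in $\Sigma$, and you correctly peel off the $\xi(Y)=0$ case first, since the claim ``all boundary curves peripheral $\Rightarrow Y\simeq\Sigma$'' is genuinely false for annuli (a collar of a boundary component is the counterexample, and you name it). With $\xi(Y)\geq 1$ the claim holds: if $c\subset\partial Y$ is parallel to a component $d$ of $\partial\Sigma$, the annulus $A$ cobounded by $c$ and $d$ cannot contain $Y$ (else $Y\subset A$ forces $\xi(Y)=0$), so $A$ lies on the far side of $c$ from $Y$ and $c$ can be pushed across $A$ onto $d$; after also absorbing any annular complementary pieces between parallel interior boundary curves (which only increases $\xi(Y)$, harmlessly for the direction of the inequality), this isotopes $Y$ onto all of $\Sigma$, contradicting properness. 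Your sketch compresses exactly this, and your final remark gives the equivalent bookkeeping via $\xi(\Sigma)=\xi(Y)+\xi\bigl(\overline{\Sigma\setminus Y}\bigr)+\text{(essential components of }\partial Y\text{)}$. In short: the proposal is correct, self-aware about where the content lies, and fills a gap the paper left implicit.

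Two small polish points you may want to make explicit if you write this up: (i) the $\xi(Y)=0$ reduction quietly uses that \Cref{lemma:maximal-complexity-subsurface}, and hence the strict inequality $\xi(Z)<\xi(\Sigma)$, is available — fine, since the paper asserts that lemma, though both lemmas fail for very small surfaces such as $\Sigma_{0,3}$ and are only used when $\xi(\Sigma)\ge 2$; (ii) when two components of $\partial Y\cup\partial Z$ happen to be isotopic in $\Sigma$, they contribute a single curve to $P$, but this is harmless since your count is a lower bound and you only need one essential boundary class.
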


In the rest of this subsection, we prove \Cref{theorem:relative-hyperbolicity}.
By \Cref{proposition:witness-for-k-multicurve}, whether a subsurface is a witness for $\multicurve{k}(\Sigma_{g,b})$ is determined by its complexity.
Thus, by \Cref{theorem:classification-hyperbolic-relhyperbolic-thick}, for each $g,b$, we determine an integer $\xi$ with $1\leq \xi\leq 3g-3+b$ that satisfies the following conditions.
\begin{enumerate}[(A)]
    \item There exists a pair of disjoint, connected and essential subsurfaces whose complexities are at least $\xi$, and 
    \item Whenever co-connected, connected and essential subsurfaces $Y,Z\subset \Sigma$ are disjoint and their complexities are at least $\xi$, we have $Y^c=Z$ (up to isotopy).
\end{enumerate}

\begin{lemma}\label{lemma:sufficient-condition-for-A}
    If an integer $\xi$ with $1\leq \xi\leq 3g-3+b$ satisfies the condition (A), then $\xi$ is less than $(3g-3+b)/2$.
\end{lemma}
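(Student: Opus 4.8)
The plan is to derive the bound directly from the superadditivity of complexity, \Cref{lemma:superadditivity-for-complexity}, after first checking that the two subsurfaces supplied by condition~(A) are genuinely proper.

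First I would unpack condition~(A): it hands us a pair of disjoint, connected, essential subsurfaces $Y, Z \subset \Sigma$ with $\xi(Y) \geq \xi$ and $\xi(Z) \geq \xi$. Since $\xi \geq 1$, each of $Y$ and $Z$ has positive complexity, hence carries an essential curve; in particular $Y$ and $Z$ are both nonempty. I then argue that $Y$ is a \emph{proper} subsurface: if not, then $\xi(Y) = \xi(\Sigma)$, so \Cref{lemma:maximal-complexity-subsurface} forces $Y$ to be isotopic to $\Sigma$, which leaves no room for a nonempty essential subsurface $Z$ disjoint from it --- a contradiction. The same argument applies to $Z$, so both are disjoint proper essential subsurfaces.

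Now I would apply \Cref{lemma:superadditivity-for-complexity} to $Y$ and $Z$ to get
\[
    3g-3+b = \xi(\Sigma) \geq \xi(Y) + \xi(Z) + 1 \geq 2\xi + 1,
\]
which rearranges to $\xi \leq (3g-4+b)/2 < (3g-3+b)/2$, as desired.

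I do not anticipate any real obstacle here: the argument is a one-line consequence of \Cref{lemma:superadditivity-for-complexity} once properness is established, and the only subtlety --- making sure we may invoke that lemma, i.e.\ that $Y$ and $Z$ are not all of $\Sigma$ --- is dispatched by \Cref{lemma:maximal-complexity-subsurface} together with the hypothesis $\xi \geq 1$.
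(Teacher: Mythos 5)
Your argument is correct and takes essentially the same route as the paper's proof: both hinge on applying \Cref{lemma:superadditivity-for-complexity} to the pair $Y,Z$ supplied by condition (A) to get $3g-3+b = \xi(\Sigma) \geq \xi(Y)+\xi(Z)+1 \geq 2\xi+1$, the only difference being that the paper phrases this as a contradiction from $\xi \geq (3g-3+b)/2$ while you argue directly. Your explicit check that $Y$ and $Z$ are proper (via \Cref{lemma:maximal-complexity-subsurface} and $\xi\geq 1$) is a small but worthwhile bit of care that the paper leaves implicit.
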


\begin{proof}
    We assume that $\xi$ with $\xi\geq (3g-3+b)/2$ satisfies the condition (A).
    Then, we can take a pair of disjoint essential subsurfaces $Y,Z\subset \Sigma$ whose complexities are at least $\xi$.
    From \Cref{lemma:superadditivity-for-complexity}, we have 
    \begin{align*}
        3g-3+b=\xi(\Sigma)\geq \xi(Y)+\xi(Z)+1\geq 3g-2+b.
    \end{align*}
    This is a contradiction.
    Thus, we obtain the conclusion.
\end{proof}

\begin{claim}
    Suppose that either (i) $g$ is even and $b\geq 2$ is even or (ii) $g$ is odd and $b\geq 3$ is odd.
    Then, the only integer $\xi$ satisfying both conditions (A) and (B) is $(3g-4+b)/2$.
\end{claim}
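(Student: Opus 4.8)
The plan is to set $\xi_0=3g-3+b$ and to observe that under either hypothesis $\xi_0$ is odd, so that $(3g-4+b)/2=(\xi_0-1)/2$ is a genuine integer; we may also assume $\xi_0\ge 2$ (as in \Cref{corollary:criteria-hyperbolicity}), hence $\xi_0\ge 3$. The claim then reduces to three assertions: (i) no integer $\xi>(\xi_0-1)/2$ satisfies condition (A); (ii) $\xi=(\xi_0-1)/2$ satisfies both (A) and (B); (iii) no integer $\xi$ with $1\le \xi\le (\xi_0-3)/2$ satisfies (B). Together these identify $(\xi_0-1)/2=(3g-4+b)/2$ as the unique $\xi$ with (A) and (B). Assertion (i) is immediate: by \Cref{lemma:sufficient-condition-for-A} any $\xi$ satisfying (A) has $\xi<\xi_0/2$, hence $\xi\le(\xi_0-1)/2$ since $\xi_0$ is odd.

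For the (A)-half of (ii) I would exhibit an explicit separating curve. When $g$ and $b$ are both even, cut $\Sigma_{g,b}$ into two copies of $\Sigma_{g/2,\,b/2+1}$; when $g$ and $b$ are both odd, cut it into $\Sigma_{(g-1)/2,\,(b+5)/2}$ and $\Sigma_{(g+1)/2,\,(b-1)/2}$. A direct Euler-characteristic computation shows that in each case both pieces have complexity exactly $(3g-4+b)/2$, and the parity hypotheses ($b\ge 2$, resp.\ $b\ge 3$) together with $\xi_0\ge 3$ prevent degenerate pieces, so the two pieces are disjoint witnesses and (A) holds. For the (B)-half: let $Y,Z\subset\Sigma$ be disjoint, co-connected, connected, essential subsurfaces with $\xi(Y),\xi(Z)\ge(\xi_0-1)/2$. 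By superadditivity of complexity (\Cref{lemma:superadditivity-for-complexity}) applied to $Y$ and $Z$ we get $\xi(Y)+\xi(Z)\le\xi_0-1$, forcing $\xi(Y)=\xi(Z)=(\xi_0-1)/2$. Since $Z$ is disjoint from $Y$, it lies in the connected surface $Y^c=\Sigma\setminus Y$; applying \Cref{lemma:superadditivity-for-complexity} to $Y$ and $Y^c$ gives $\xi(Y^c)\le\xi_0-1-\xi(Y)=(\xi_0-1)/2=\xi(Z)$. Thus $Z\subset Y^c$ with $\xi(Z)=\xi(Y^c)$, and \Cref{lemma:maximal-complexity-subsurface} applied inside $Y^c$ yields $Z=Y^c$, so (B) holds.

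For assertion (iii), fix $\xi$ with $1\le\xi\le(\xi_0-3)/2$ (a nonempty range only when $\xi_0\ge 5$). The idea is to violate (B) by a linear decomposition: choose two disjoint separating curves cutting $\Sigma$ into three connected essential subsurfaces $Y$, $P$, $Z$ arranged in a line, with $P$ a pair of pants sharing one curve with $Y$ and one with $Z$, and with $\xi(Y),\xi(Z)\ge\xi$. Such a decomposition exists because the two curves reduce the available complexity by $2$, so that $\xi(Y)+\xi(Z)=\xi_0-2\ge 2\xi+1$ and this total can be split into two parts each at least $\xi$; one then apportions the genus and boundary of $\Sigma$ among $Y$, $P$, $Z$ accordingly, using the remaining slack (which exists since $b\ge 2$, resp.\ $b\ge 3$) to keep all pieces non-degenerate — a routine bookkeeping entirely parallel to the decompositions in the proof of \Cref{theorem:MaximumNumberForSubsurfaces}. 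For such a configuration, $Y$ and $Z$ are disjoint, connected, essential, and co-connected, because $\Sigma\setminus Y=P\cup Z$ and $\Sigma\setminus Z=Y\cup P$ are connected; but $Y^c=P\cup Z\supsetneq Z$, so $Y^c\neq Z$ and (B) fails. Combining (i)--(iii) proves the claim.

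I expect the main obstacle to be the bookkeeping in the two existence constructions — the (A)-half of (ii) and, especially, (iii) — where one must check, across all admissible small values of $g$ and $b$ allowed by cases (i) and (ii), that the subsurfaces produced are honestly essential (no sphere, disk, annulus, and no pair of pants occupying a slot that needs positive complexity) and that the chosen separating curves are essential. This amounts to elementary Euler-characteristic arithmetic of the same flavor as in \Cref{theorem:MaximumNumberForSubsurfaces}, but it must be carried out carefully to cover boundary-heavy and low-genus surfaces such as $\Sigma_{0,b}$ and $\Sigma_{1,b}$, where the decompositions have little room to spare.
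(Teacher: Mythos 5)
Your proposal is correct and follows essentially the same three-step route as the paper: rule out $\xi\ge(\xi_0+1)/2$ via \Cref{lemma:sufficient-condition-for-A}, verify $\xi=(\xi_0-1)/2$ satisfies (A) by an explicit separating curve and (B) by the superadditivity argument (\Cref{lemma:superadditivity-for-complexity} plus \Cref{lemma:maximal-complexity-subsurface}), and rule out $\xi\le(\xi_0-3)/2$ by a linear ``$Y$--pair-of-pants--$Z$'' decomposition so that $Y^c\supsetneq Z$. Your explicit cut for condition (A) in the odd case and your middle-step (B) argument match the paper's almost verbatim. The one substantive difference is in the final assertion (iii): the paper writes down concrete subsurfaces $Y,Z$ with $b_Y+b_Z=b+1$ realizing the linear decomposition in each parity case, while you leave the apportionment of genus and boundary as ``routine bookkeeping.'' You correctly flag this as the residual work; since $\xi(Y)+\xi(Z)=\xi_0-2\ge 2\xi+1$ and $b\ge 2$ (resp. $b\ge 3$) supplies a boundary component for the middle pair of pants, the bookkeeping does go through, but in a formal writeup you would want to exhibit the split (e.g.\ arranging $\xi(Y)=(\xi_0-3)/2$, $\xi(Z)=(\xi_0-1)/2$) so that essentiality of both pieces is manifest, exactly as the paper does.
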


\begin{proof}
    We begin by considering assumption (i).

    (1) In the case of $\xi\geq (3g-2+b)/2$, the integer $\xi$ cannot satisfy the condition (A) by \Cref{lemma:sufficient-condition-for-A}.

    (2) We consider the case of $\xi=(3g-4+b)/2$.
    Then, we can take disjoint essential subsurfaces $Y,Z\subset \Sigma$ whose complexities are at least $(3g-4+b)/2$ (For instance, each of them has genus $g/2$ and $b/2+1$ boundary components).
    Therefore, the condition (A) is verified.
    Let $Y,Z$ be any pair of disjoint co-connected essential subsurfaces with $\xi(Y)$ and $\xi(Z)$ at least $(3g-4+b)/2$.
    If $\xi(Y)>(3g-4+b)/2$ or $\xi(Z)>(3g-4+b)/2$, \Cref{lemma:superadditivity-for-complexity} yields a contradiction.
    Therefore, we must have $\xi(Y)=\xi(Z)=(3g-4+b)/2$.
    By \Cref{lemma:superadditivity-for-complexity} again, we have 
    \begin{equation*}
        \xi(Y^c)+\xi(Y)+1\leq 3g-3+b.
    \end{equation*}
    Thus, $\xi(Y^c)\leq (3g-4+b)/2$.
    Since $Y$ and $Z$ are disjoint, we find $\xi(Z)\leq \xi(Y^c)$.
    Therefore, from \Cref{lemma:maximal-complexity-subsurface}, we obtain $Z=Y^c$.
    Thus, the condition (B) is also verified.

    (3) Finally, we consider the case of $\xi\leq (3g-6+b)/2$.
    Let $Y=\Sigma_{g/2,b/2+1}$ and $Z=\Sigma_{g/2,b/2}$.
    We glue one boundary component of $Y$ and one boundary component of $Z$ to two distinct boundary components of a pair of pants.
    The resulting surface is $\Sigma_{g,b}$.
    Therefore, there exist disjoint, co-connected essential subsurfaces $Y,Z\subset \Sigma$ satisfying that $\xi(Y)$ and $\xi(Z)$ are at least $\xi$, and that the union $Y\cup Z$ does not exhaust $\Sigma$. 
    Thus, the condition (B) cannot be satisfied.

    We now turn to statement (ii).
    As in the proof of statement (i), we divide the argument into three cases: (1) $\xi\geq(3g-2+b)/2$, (2) $\xi=(3g-4+b)/2$, or (3) $\xi \leq (3g-6+b)/2$.
    Then, for cases (1), (2), we can discuss in a similar way as statement (i).
    Note that, in case (2), the condition (A) is verified by $b\geq 3$.
    We consider the case of (3), that is, $\xi \leq (3g-6+b)/2$.
    For $Y=Z=\Sigma_{(g-1)/2,\,(b-1)/2}$, gluing them to a single pair of pants and a single annulus, 
    we obtain $\Sigma_{g,b}$ (see \Cref{figure:gluing-surface-odd-case}).
    Since $\xi(Y)=\xi(Z)\geq \xi$,
    The condition (B) cannot be satisfied.
    \begin{figure}
        \vspace{6mm}
       \centering
       \begin{overpic}[]{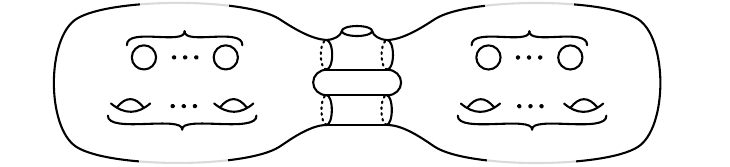}
           \put(12,17){$Y$}
           \put(83,17){$Z$}
           \put(21,22){$\dfrac{b-1}{2}$}
           \put(21,0){$\dfrac{g-1}{2}$}
           \put(68.5,22){$\dfrac{b-1}{2}$}
           \put(68.5,0){$\dfrac{g-1}{2}$}
       \end{overpic}
       \caption{}
       \label{figure:gluing-surface-odd-case}
    \end{figure} 
\end{proof}

The equivalence between the condition $\xi(X)\geq (3g-4+b)/2$  for an essential subsurface $X\subset \Sigma$ and the condition that $X$ is a witness for $\multicurve{k}(\Sigma)$ holds exactly when $k$ satisfies
\begin{equation*}
    3g-3+b-(k-1)=\frac{3g-4+b}{2} 
\end{equation*}
that is, $k=(3g+b)/2$.
This confirms the first and fourth rows of \Cref{table:list-for-case-of-rel-hyperbolic}.

\begin{claim}
    Let $g$ be odd and $b=1$.
    Then no integer $\xi$ with $1\leq \xi \leq 3g-3+b$ satisfies both conditions (A) and (B).
\end{claim}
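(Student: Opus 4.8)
Write $g = 2h+1$ with $h \ge 0$; then $\xi(\Sigma_{g,1}) = 6h+1$. The plan is to first pin down exactly which $\xi$ can satisfy condition (A), and then to show that every such $\xi$ fails condition (B). For the first part I would sharpen \Cref{lemma:sufficient-condition-for-A} in the case $b=1$ using the Euler characteristic estimate appearing in the proof of \Cref{theorem:MaximumNumberForSubsurfaces}. Suppose connected, essential subsurfaces $Y, Z \subset \Sigma_{g,1}$ are disjoint with $\xi(Y), \xi(Z) \ge \xi$. Being disjoint and nonempty, $Y$ and $Z$ are proper, hence each has at least one boundary component, so that estimate gives $\chi(Y), \chi(Z) \le -\lceil (2\xi+1)/3 \rceil$. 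Since $Y$ and $Z$ are essential, no component of $\Sigma \setminus (Y \cup Z)$ is a disk or a sphere, so its Euler characteristic is $\le 0$; additivity of $\chi$ then yields $1 - 2g = \chi(\Sigma_{g,1}) \le -2 \lceil (2\xi+1)/3 \rceil$, hence $\lceil (2\xi+1)/3 \rceil \le 2h$ and therefore $\xi \le 3h-1 = (3g-5)/2$. In particular, if $h = 0$ no integer $\xi \ge 1$ satisfies (A) and the claim is immediate, so I assume $h \ge 1$ from now on.

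For the second part, I would show that condition (B) fails for every $\xi$ with $1 \le \xi \le 3h-1$. The idea is to cut $\Sigma_{g,1}$, along one non-separating curve and two further curves, into three pieces $Y \cong \Sigma_{h,2}$, $Z \cong \Sigma_{h,2}$, and a pair of pants $W \cong \Sigma_{0,3}$ having $\partial \Sigma$ as one of its three boundary curves, glued in a ``triangular'' pattern (each of $Y, Z$ sharing one curve with $W$ and sharing the remaining curve with the other). Here the genus $g = 2h+1$ of $\Sigma$ is accounted for as $h + h$ from $Y$ and $Z$ plus the single handle produced by closing the cycle. In this configuration $Y$ and $Z$ are disjoint, connected, essential, and co-connected (deleting either one leaves the other two pieces glued along a curve, hence connected), and $\xi(Y) = \xi(Z) = 3h-1 \ge \xi$. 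On the other hand $Y^{c} = Z \cup W \cong \Sigma_{h,3}$ has complexity $3h \ne 3h-1 = \xi(Z)$, so $Y^{c} \ne Z$; thus the pair $(Y, Z)$ witnesses the failure of (B). Combining the two parts, no integer $\xi$ satisfies both (A) and (B).

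The main obstacle I anticipate is the borderline value $\xi = (3g-3)/2$: \Cref{lemma:sufficient-condition-for-A} does not exclude it, and ruling it out requires precisely the integer-valued form of the Euler characteristic bound (the ceiling $\lceil (2\xi+1)/3 \rceil$ in place of the real quantity $(2\xi+1)/3$), exactly as in \Cref{theorem:MaximumNumberForSubsurfaces}. The only other point needing care is the genus bookkeeping in the construction: the curve shared by $Y$ and $Z$ must be chosen non-separating in $\Sigma$, since otherwise the triangular gluing would yield $\Sigma_{2h+2,1}$ instead of $\Sigma_{2h+1,1}$. Once these two points are settled, the rest reduces to routine Euler characteristic arithmetic and the surface-gluing counts already used elsewhere in the paper.
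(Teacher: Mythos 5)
Your proposal is correct, and it is worth comparing with the paper's argument because the two are organized differently. For large $\xi$ the content is essentially the same: your chain $\chi(Y),\chi(Z)\le-\lceil(2\xi+1)/3\rceil$ together with $\chi(\Sigma_{g,1})\le\chi(Y)+\chi(Z)$ is precisely the Euler-characteristic estimate underlying the bound $\mu(g,b,\xi)\le\lfloor(2g-2+b)/\lceil(2\xi+1)/3\rceil\rfloor$ in \Cref{theorem:MaximumNumberForSubsurfaces}, which is what the paper invokes to show that $\xi=(3g-3)/2$ fails condition (A); your integer rounding yields the same threshold, namely that (A) forces $\xi\le(3g-5)/2=3h-1$ (and correctly disposes of $g=1$, where no $\xi\ge1$ satisfies (A)). Where you genuinely go further is the complementary range $1\le\xi\le 3h-1$: the paper's written proof of this claim treats only $\xi\ge(3g-3)/2$ and leaves the failure of (B) for smaller $\xi$ implicit, whereas you exhibit it explicitly via the triangular gluing of $Y\cong\Sigma_{h,2}$, $Z\cong\Sigma_{h,2}$ and a pair of pants $W\cong\Sigma_{0,3}$ carrying $\partial\Sigma$. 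The bookkeeping checks out: $\chi=-2h-2h-1=-4h-1=\chi(\Sigma_{2h+1,1})$, one free boundary component, and the cycle in the gluing pattern contributes the extra handle, so the result is $\Sigma_{g,1}$; the pieces $Y,Z$ are connected, essential, co-connected and disjoint with $\xi(Y)=\xi(Z)=3h-1\ge\xi$, while $Y^{c}\cong\Sigma_{h,3}$ has complexity $3h\ne 3h-1$, so $Y^{c}\ne Z$ and (B) fails. (Your worry about the shared curve being non-separating is automatically resolved: in the triangular pattern each gluing curve is non-separating because the remaining two gluings keep the complement connected.) In short, your argument is in the same spirit as the surrounding claims of the paper but is more self-contained for this case, at the modest cost of the extra gluing construction.
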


\begin{proof}
    We first consider the case of $\xi\geq (3g-3)/2$.
    Suppose that we take essential disjoint subsurfaces $Y, Z \subset \Sigma$ of complexities at least $\xi$.
    If either $\xi(Y)>\xi$ or $\xi(Z)>\xi$, then we obtain a contradiction from \Cref{lemma:superadditivity-for-complexity}.
    Therefore we have $\xi(Y)=\xi(Z)=(3g-3)/2$.
    However, by \Cref{theorem:MaximumNumberForSubsurfaces}, 
    \begin{equation*}
        \mu(g,1,(3g-3)/2)\leq \left\lfloor \frac{2g-1}{\lceil (3g-2)/3 \rceil} \right\rfloor =\left\lfloor 2-\frac1{g}\right\rfloor\leq 1.
    \end{equation*}
    Thus the condition (A) does not hold.
\end{proof}

\begin{claim}
    Suppose that either (i) $g$ is even and $b$ is odd, or (ii) $g$ is odd and $b\geq 4$ is even.
    Then, no integer $\xi$ with $1\leq \xi\leq 3g-3+b$ satisfies both conditions (A) and (B).
\end{claim}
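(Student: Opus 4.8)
The plan is to show that, in both cases (i) and (ii), condition (A) confines $\xi$ to a range in which condition (B) must fail. The key initial observation is a parity one: when $g$ is even and $b$ is odd, and when $g$ is odd and $b$ is even, the complexity $\xi(\Sigma)=3g-3+b$ is even. Hence \Cref{lemma:sufficient-condition-for-A} gives, for any $\xi$ satisfying (A), the strict inequality $\xi<(3g-3+b)/2$; since $(3g-3+b)/2$ is an integer this forces $\xi\le(3g-5+b)/2$, which is again an integer. It therefore suffices to show that condition (B) fails for every integer $\xi$ with $1\le\xi\le(3g-5+b)/2$ (if $3g-3+b<4$ no such $\xi$ exists and the claim is vacuous). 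This parallels the third case in the proofs of the preceding claims; the difference is that here $(3g-4+b)/2$ is not an integer, so the critical value does not occur and only the range $\xi\le(3g-5+b)/2$ needs treatment.

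To defeat condition (B) I would exhibit disjoint, connected, co-connected essential subsurfaces $Y,Z\subset\Sigma$ with $\xi(Y),\xi(Z)\ge\xi$ but $Y^c\ne Z$. The construction is to realize $\Sigma_{g,b}$ as $Y\cup W\cup Z$, where $W\cong\Sigma_{0,3}$ is a pair of pants glued along two of its three boundary circles to one boundary circle of $Y$ and one of $Z$, the third boundary circle of $W$ becoming a boundary component of $\Sigma$ (this uses $b\ge1$, which holds in both cases). Then $\Sigma\setminus Y=W\cup Z$ and $\Sigma\setminus Z=Y\cup W$ are connected, so $Y$ and $Z$ are co-connected, while $Y^c=W\cup Z$ has one more boundary component than $Z$, so $Y^c\ne Z$ and (B) fails. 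The gluing forces $g(Y)+g(Z)=g$ and $b(Y)+b(Z)=b+1$ with $b(Y),b(Z)\ge1$, whence $\xi(Y)+\xi(Z)=3g-5+b$ automatically; so it is enough to choose $Y,Z$ with $\xi(Y)=\xi(Z)=(3g-5+b)/2\ge\xi$. In case (i) I would take $Y=Z=\Sigma_{g/2,\,(b+1)/2}$, and in case (ii) $Y=\Sigma_{(g-1)/2,\,b/2+2}$ and $Z=\Sigma_{(g+1)/2,\,b/2-1}$. A direct check, using the parities of $g$ and $b$, confirms that all genera and boundary counts are nonnegative integers, that this gluing produces $\Sigma_{g,b}$, and that $\xi(Y)=\xi(Z)=(3g-5+b)/2$.

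The step that needs the most care --- not a deep obstacle, but the crux --- is the bookkeeping on the boundary inequalities $b(Y),b(Z)\ge1$ and on integrality: the construction requires $(3g-5+b)/2$, together with $(b+1)/2$ in case (i) and with $b/2-1$ in case (ii), to be positive integers, and it is exactly here that the parity hypotheses and the assumption $b\ge4$ in case (ii) are used (for $b=2$ and $g$ odd one would instead get $b(Z)=0$, consistently with the fact that $\multicurve{k}(\Sigma_{g,2})$ is then relatively hyperbolic rather than thick). I would also verify that the two gluing circles are essential in $\Sigma$ --- each separates $\Sigma$ into pieces of positive complexity, so it is neither nullhomotopic nor boundary-parallel --- so that $Y$ and $Z$ are genuinely essential subsurfaces, and note that $\xi(Y),\xi(Z)\ge\xi\ge1$ rules out annuli. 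Finally I would dispose of the planar exceptions $g=0$ in case (i) (and $g=1$ in case (ii)) by observing that either the same formulas still give essential subsurfaces or $3g-3+b<4$, so that no admissible $\xi$ exists. Assembling these points shows that (A) and (B) cannot both hold, which is the claim.
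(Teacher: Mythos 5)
Your proof follows the same strategy as the paper: use the parity of $3g-3+b$ to force $\xi\le(3g-5+b)/2$ whenever (A) holds, then defeat (B) in that range by realizing $\Sigma_{g,b}$ as $Y\cup W\cup Z$ with $W$ a pair of pants whose third boundary circle is a component of $\partial\Sigma$, so that $Y$ and $Z$ are disjoint, co-connected, essential, of complexity exactly $(3g-5+b)/2$, and $Y^c=W\cup Z\ne Z$.

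Interestingly, your boundary bookkeeping is correct where the paper's appears not to be. In the gluing $\Sigma=Y\cup W\cup Z$ one must have $b(Y)+b(Z)=b+1$ and $\xi(Y)+\xi(Z)=3g-5+b$. Your choices $Y=Z=\Sigma_{g/2,(b+1)/2}$ in case (i) and $Y=\Sigma_{(g-1)/2,\,b/2+2}$, $Z=\Sigma_{(g+1)/2,\,b/2-1}$ in case (ii) satisfy both, and each has complexity exactly $(3g-5+b)/2$, hitting the top of the required range. The paper's stated choices, $Y=Z=\Sigma_{g/2,(b-1)/2}$ in case (i) and $Y=\Sigma_{(g+1)/2,\,b/2-2}$, $Z=\Sigma_{(g-1)/2,\,b/2+1}$ in case (ii), give $b(Y)+b(Z)=b-1$, so the resulting surface would be $\Sigma_{g,b-2}$, and moreover have complexity only $(3g-7+b)/2$, one short of the needed range; this appears to be a typographical slip (the analogous construction in the even/even claim of the paper does have the correct boundary count). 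In short: same route, but you repaired an arithmetic error in the published decomposition.
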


\begin{proof}
    If $\xi\geq (3g-3+b)/2$, in either case (i) or (ii), $\xi$ cannot satisfy the condition (A) by \Cref{lemma:sufficient-condition-for-A}.

    We consider the case of $\xi<(3g-3+b)/2$, that is, $\xi\leq (3g-5+b)/2$.
    We define surfaces $Y$ and $Z$ as follows:
    \begin{align*}
        &Y=\Sigma_{g/2,\,(b-1)/2}, & Z&=\Sigma_{g/2,\,(b-1)/2} &\ &\text{in case (i),}\\
        &Y=\Sigma_{(g+1)/2,\,b/2-2}, & Z&=\Sigma_{(g-1)/2,\,b/2+1} &\  &\text{in case (ii).}
    \end{align*}
    Then we glue one boundary component of $Y$ and one boundary component of $Z$ to two distinct boundary components of a pair of pants.
    The resulting surface is $\Sigma_{g,b}$.
    Therefore, from $\Sigma_{g,b}$, we can choose disjoint, co-connected essential subsurfaces $Y,Z\subset \Sigma$ such that $\xi(Y),\xi(Z)\geq \xi$ and the union $Y\cup Z$ does not exhaust $\Sigma$.
    This implies that the condition (B) cannot be satisfied.
\end{proof}

\begin{claim}
    Let $g$ be even and $b=0$.
    If an integer $\xi$ satisfies both conditions (A) and (B), then $\xi$ is equal to $(3g-4)/2$ or $(3g-6)/2$.
\end{claim}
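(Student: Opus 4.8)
The plan is to argue as in the preceding two claims, splitting into cases according to the size of $\xi$, now with $g$ even and $b=0$, so that $\xi(\Sigma)=3g-3$ is odd and the two target values $(3g-4)/2$ and $(3g-6)/2$ are consecutive integers; the goal is to squeeze any feasible $\xi$ between them. For the upper bound $\xi\le(3g-4)/2$ I would simply invoke \Cref{lemma:sufficient-condition-for-A}: condition (A) forces $\xi<(3g-3)/2$, and since $3g-3$ is odd the largest admissible integer is $(3g-4)/2$; this disposes of all $\xi\ge(3g-2)/2$ at once.

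For the lower bound I would rule out every $\xi\le(3g-8)/2$ by violating (B), i.e.\ by exhibiting disjoint, connected, essential, co-connected, proper subsurfaces $Y,Z\subset\Sigma$ with $\xi(Y),\xi(Z)\ge(3g-8)/2$ but $Y^c\neq Z$. Concretely, I would cut $\Sigma_{g,0}$ into $Y=\Sigma_{g/2-1,\,2}$, a pair of pants $W=\Sigma_{0,3}$ glued along two of its cuffs to the two boundary curves of $Y$, and $Z=\Sigma_{g/2,\,1}$ glued to the remaining cuff of $W$. An Euler-characteristic (equivalently, pants-count) check confirms that the union is $\Sigma_{g,0}$ and that $\xi(Y)=(3g-8)/2$ and $\xi(Z)=(3g-4)/2$, both $\ge\xi$. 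The complements $\Sigma\setminus Y=W\cup Z$ and $\Sigma\setminus Z=W\cup Y$ are connected, so $Y$ and $Z$ are co-connected; but $\Sigma\setminus Y\cong\Sigma_{g/2,\,2}$ while $Z\cong\Sigma_{g/2,\,1}$, so $Y^c\neq Z$ and (B) fails. Hence $\xi\ge(3g-6)/2$, and combining with the upper bound gives $\xi\in\{(3g-6)/2,(3g-4)/2\}$. For $g=2$, where $(3g-8)/2<1$, this construction is vacuous but also unneeded, since the upper bound already pins $\xi$ to $1$.

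The main obstacle, compared with the cases $b\ge2$ treated earlier, is that $\Sigma_{g,0}$ has no boundary components to absorb the extra cuffs of a separating piece and that an annular separator is useless (it would make $Y\cup Z$ fill $\Sigma$, so $Y^c=Z$ and (B) would not be violated); the resolution is the genus bookkeeping above, where attaching $W$ along two cuffs of $Y$ raises the genus of $W\cup Y$ by one, so $W\cup Y\cong\Sigma_{g/2,\,1}$ and gluing on $Z\cong\Sigma_{g/2,\,1}$ along the last cuff closes up to $\Sigma_{g,0}$; one must also check $g\ge4$ so that $Y$ has positive genus and all three cutting curves are essential. For completeness I would append a short verification of the converse — that both $(3g-4)/2$ and $(3g-6)/2$ actually satisfy (A) and (B) — which follows from \Cref{lemma:superadditivity-for-complexity} together with \Cref{lemma:maximal-complexity-subsurface}: at either threshold a disjoint co-connected pair of witnesses has its complexities forced to be minimal and is therefore forced to exhaust $\Sigma$.
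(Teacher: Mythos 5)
Your proof of the claim is correct and follows the paper's strategy closely: the upper bound $\xi\le(3g-4)/2$ comes from \Cref{lemma:sufficient-condition-for-A} exactly as in the paper, and the lower bound comes from exhibiting a three-piece decomposition that violates condition (B) for every $\xi\le(3g-8)/2$. The only real difference is the choice of decomposition in the lower-bound step: the paper glues two copies of $\Sigma_{(g-2)/2,\,2}$ to a single four-holed sphere (a symmetric configuration with $\xi(Y)=\xi(Z)=(3g-8)/2$), while you take the asymmetric configuration $Y=\Sigma_{g/2-1,\,2}$, $W=\Sigma_{0,3}$, $Z=\Sigma_{g/2,\,1}$ (with $\xi(Y)=(3g-8)/2$ and $\xi(Z)=(3g-4)/2$); both are valid, and neither offers an advantage over the other. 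One caution on your closing sketch of the converse: the assertion that at either threshold a disjoint co-connected pair ``has its complexities forced to be minimal and is therefore forced to exhaust $\Sigma$'' is too quick at $\xi=(3g-6)/2$. There superadditivity alone only yields $\xi(Y^c)\le(3g-2)/2$, so a priori $\xi(Z)$ could be $(3g-4)/2$ or $(3g-2)/2$ without $Z=Y^c$; the paper closes this gap by observing that $\xi(Y)\equiv 0\pmod 3$ forces $Y$ to have at least three boundary components, which sharpens the superadditivity inequality to $\xi(Y^c)\le(3g-6)/2$. Since the claim as stated is one-directional, this does not affect the correctness of your proof of the claim, but the converse verification genuinely needs that extra boundary-count argument.
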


\begin{proof}
    If $\xi\geq (3g-2)/2$, then $\xi$ cannot satisfy the condition (A) by \Cref{lemma:sufficient-condition-for-A}.
    
    Let $\xi=(3g-4)/2$.
    For $Y=Z=\Sigma_{g/2,1}$, gluing boundary components of $Y$ and $Z$ to each other, we obtain $\Sigma_{g,0}$.
    Since $\xi(Y)=\xi(Z)=(3g-4)/2$, the condition (A) is verified.
    Let $Y, Z\subset \Sigma$ be any pair of disjoint, co-connected essential subsurfaces with $\xi(Y)\geq (3g-4)/2$, $\xi(Z)\geq (3g-4)/2$.
    Then, since
    \begin{equation*}
        \xi(Y^c)\leq \xi(\Sigma)-\xi(Y)-1\leq \frac{3g-4}{2} \text{ and }Z\subset Y^c,
    \end{equation*}
    we have $\xi(Y^c)=\xi(Z)$.
    Thus we have $Y^c=Z$ up to isotopy.

    Let $\xi=(3g-6)/2$.
    We can verify the condition (A) in the same way in the case of $\xi=(3g-4)/2$.
    Let $Y, Z\subset \Sigma$ be any pair of disjoint, co-connected essential subsurfaces with $\xi(Y)\geq (3g-6)/2$, $\xi(Z)\geq (3g-6)/2$.
    Then, since
    \begin{equation*}
        \xi(Y^c)\leq \xi(\Sigma)-\xi(Y)-1\leq \frac{3g-2}{2} \text{ and } Z\subset Y^c,
    \end{equation*}
    we have $(3g-6)/2\leq \xi(Z)\leq \xi(Y^c)\leq (3g-2)/2$.
    Therefore we may assume that $(\xi(Y),\xi(Z))$ is equal to either 
    \begin{enumerate}[(i)]
        \item $\left(\frac{3g-6}{2}, \frac{3g-6}{2}\right)$, $\left(\frac{3g-6}{2}, \frac{3g-4}{2}\right),
        \left(\frac{3g-6}{2}, \frac{3g-2}{2}\right)$, \vspace{1mm}
        \item $\left(\frac{3g-4}{2}, \frac{3g-4}{2}\right)$, or \vspace{1mm}
        \item $\left(\frac{3g-4}{2}, \frac{3g-2}{2}\right)$.
    \end{enumerate}
    We consider case (i).
    Since $\xi(Y)$ is divisible by $3$, the subsurface $Y\subset \Sigma$ has at least $3$ boundary components.
    It follows that
    \begin{equation*}
        \xi(\Sigma) \geq \xi(Y) + \xi(Y^c) +3.
    \end{equation*}
    Therefore, $\xi(Y^c) \leq (3g-6)/2$.
    Since $Z\subset Y^c$, the complexity $\xi(Z)$ must equal $(3g-6)/2$ and hence we conclude $Z=Y^c$.


    Case (ii) reduces to the discussion for the case $\xi=(3g-4)/2$.
    Case (iii) contradicts \Cref{lemma:superadditivity-for-complexity}.

    Suppose that $\xi\leq (3g-8)/2$.
    Let $Y=Z=\Sigma_{(g-2)/2,\,2}$, and we can glue $Y$ and $Z$ to a single four-holed sphere along their boundaries so that the resulting surface is $\Sigma_{g,0}$.
    Thus, the condition (B) cannot be satisfied in this case.
\end{proof}

The equivalence between the condition $\xi(X)\geq (3g-4)/2$ (resp. $(3g-6)/2$) for an essential subsurface $X\subset \Sigma$ and the condition that $X$ is a witness for $\multicurve{k}(\Sigma)$ holds exactly when $k$ satisfies
\begin{equation*}
    3g-3-(k-1)=\frac{3g-4}{2},\ \left(\text{resp.}\ 3g-3-(k-1)=\frac{3g-6}{2}\right)
\end{equation*}
that is, when $k=3g/2$, (resp. $k=(3g+2)/2$).
This confirms the second row of \Cref{table:list-for-case-of-rel-hyperbolic}.

\begin{claim}\label{claim:g-odd-b-2}
    Let $g$ be odd and $b=2$. 
    Then, the only integer $\xi$ satisfying both conditions (A) and (B) is $(3g-3)/2$.
\end{claim}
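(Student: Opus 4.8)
The plan is to mimic the structure of the preceding claims. Fix $g$ odd and $b=2$, so $\xi(\Sigma)=3g-1$, and set $t\coloneqq(3g-3)/2$, which is an integer because $g$ is odd; note $t+1=(3g-3+b)/2$. Writing $g=2h+1$ makes the Euler-characteristic bookkeeping cleaner: then $\xi(\Sigma)=6h+1$ and $t=3h$. We may assume $g\ge 3$ (for $g=1$, \Cref{lemma:sufficient-condition-for-A} shows that no $\xi$ in the admissible range satisfies (A)). I would split the argument according to whether $\xi>t$, $\xi=t$, or $\xi<t$, and show that only $\xi=t$ satisfies both (A) and (B).

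For $\xi\ge t+1=(3g-3+b)/2$, condition (A) fails at once by \Cref{lemma:sufficient-condition-for-A}. For $\xi=t=3h$, I would verify both conditions. Condition (A) follows from \Cref{theorem:MaximumNumberForSubsurfaces}, which gives $\mu(g,2,3h)=2$ (equivalently, gluing two copies of $\Sigma_{h,3}$, each of complexity $3h$, along two matched pairs of boundary circles and leaving one free boundary circle on each copy, produces $\Sigma_{2h+1,2}=\Sigma_{g,2}$). For condition (B), let $Y,Z\subset\Sigma$ be disjoint, connected, co-connected, essential subsurfaces with $\xi(Y),\xi(Z)\ge 3h$. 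Applying \Cref{lemma:superadditivity-for-complexity} to $Y$ and $Z$ gives $\xi(Y)+\xi(Z)\le 6h$, forcing $\xi(Y)=\xi(Z)=3h$; applying it to $Y$ and $Y^c$ gives $\xi(Y^c)\le 3h$, and since $Z\subseteq Y^c$ we obtain $3h=\xi(Z)\le\xi(Y^c)\le 3h$, so $\xi(Z)=\xi(Y^c)$. Then \Cref{lemma:maximal-complexity-subsurface}, applied with $Y^c$ as the ambient surface, yields $Z=Y^c$, which is exactly (B).

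For $1\le\xi\le t-1=(3g-5)/2$ (i.e.\ $\xi\le 3h-1$), I would show (B) fails by an explicit three-piece decomposition. Take $Y=\Sigma_{h,2}$, $Z=\Sigma_{h+1,1}$, and a pair of pants $P$; glue $\partial Z$ and one of the two boundary circles of $Y$ to two distinct boundary circles of $P$. An Euler-characteristic count shows the result is $\Sigma_{2h+1,2}=\Sigma_{g,2}$. The subsurfaces $Y$ and $Z$ are disjoint, connected, co-connected (their complements are $Z\cup P$ and $Y\cup P$, both connected), and essential (the two gluing curves are essential for $g\ge 3$), with $\xi(Y)=3h-1\ge\xi$ and $\xi(Z)=3h+1\ge\xi$; but $Y\cup Z$ omits $P$, so $Y^c=Z\cup P\ne Z$. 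Hence (B) fails. Combining the three ranges shows that $(3g-3)/2$ is the unique $\xi$ satisfying (A) and (B).

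I expect the main obstacle to be the verification of (B) at the threshold $\xi=(3g-3)/2$: it requires using the equality case of \Cref{lemma:superadditivity-for-complexity} twice --- once for the pair $Y,Z$ and once for $Y,Y^c$ --- and then invoking \Cref{lemma:maximal-complexity-subsurface} relative to $Y^c$ instead of $\Sigma$. The constructions in the remaining two ranges are routine once the genus and boundary counts of the glued pieces are matched against $\Sigma_{g,2}$, which is possible precisely because $g$ is odd and $b=2$.
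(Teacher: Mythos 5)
There is a genuine gap in your verification of condition (B) at the threshold $\xi=(3g-3)/2$, and it stems from an arithmetic slip: with $g=2h+1$ and $b=2$ we have $\xi(\Sigma)=3g-3+b=6h+2$, not $6h+1$ as you wrote. This matters. Applying \Cref{lemma:superadditivity-for-complexity} to the disjoint pair $Y,Z$ gives only $\xi(Y)+\xi(Z)\le\xi(\Sigma)-1=6h+1$, so alongside $\xi(Y)=\xi(Z)=3h$ the case $\{\xi(Y),\xi(Z)\}=\{3h,\,3h+1\}$ also survives. Likewise, with $\xi(Y)=3h$ the lemma applied to $Y,Y^c$ yields $\xi(Y^c)\le 6h+1-3h=3h+1$, not $\le 3h$; if $\xi(Z)=3h<\xi(Y^c)$, \Cref{lemma:maximal-complexity-subsurface} no longer applies and $Z=Y^c$ is not forced. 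So (B) is not established.

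The paper closes this case with a refinement you are missing: after reducing to $\xi(Y)=3h$ (so $\xi(Y)\equiv 0\pmod 3$), the relation $\xi(Y)=3g_Y-3+b_Y$ forces $b_Y\equiv 0\pmod 3$, hence $b_Y\ge 3$; since at most one component of $\partial Y$ can be isotopic to $\partial\Sigma$ (after swapping $Y$ and $Z$ if needed), at least two components of $\partial Y$ are essential interior curves, giving the stronger bound $\xi(\Sigma)\ge\xi(Y)+\xi(Y^c)+2$, hence $\xi(Y^c)\le 3h$. This is what eliminates the $(3h,3h+1)$ possibility and pins down $\xi(Z)=\xi(Y^c)=3h$ before invoking \Cref{lemma:maximal-complexity-subsurface}. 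You should insert this mod-$3$ boundary-count step into your argument. The other two ranges of $\xi$ are handled correctly; your three-piece decomposition $\Sigma_{h,2}\cup P\cup\Sigma_{h+1,1}$ for $\xi\le(3g-5)/2$ differs from the paper's symmetric $\Sigma_{h,2}\cup(\text{twice-holed torus})\cup\Sigma_{h,2}$ but is a valid alternative, and your reduction to $g\ge 3$ is fine.
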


\begin{proof}
    If $\xi\geq (3g-1)/2$, then $\xi$ cannot satisfy the condition (A) by \Cref{lemma:sufficient-condition-for-A}.

    Let $\xi=(3g-3)/2$.
    For $Y=Z=\Sigma_{(g-1)/2,\,3}$, gluing two components of $\partial Y$ and two components of $\partial Z$ to each other, we obtain $\Sigma_{g,2}$.
    Therefore the condition (A) is verified.
    Let $Y,Z\subset \Sigma$ be any pair of disjoint, co-connected essential subsurfaces with $\xi(Y)\geq (3g-3)/2,$\ $\xi(Z) \geq (3g-3)/2$.
    Then, by \Cref{lemma:maximal-complexity-subsurface}, we have 
    \begin{equation*}
        \xi(Y^c) \leq \xi(\Sigma)-\xi(Y)-1  \leq \frac{3g-1}{2}.
    \end{equation*}
    Now, since $(3g-3)/2\leq \xi(Z)\leq \xi(Y^c)\leq (3g-1)/2$.
    Therefore, we may assume that
    \begin{equation*}
        (\xi(Y),\xi(Z)) = \left(\frac{3g-3}{2}, \frac{3g-3}{2}\right)\ \text{or}\ \left(\frac{3g-3}{2}, \frac{3g-1}{2}\right).
    \end{equation*}
    Moreover, we may assume that at most one boundary component of $Y$ is homotopic to $\partial \Sigma$.
    Since $\xi(Y)$ is divisible by $3$, the number of boundary components of $Y$ is at least $3$. 
    Therefore,
    \begin{equation*}
        \xi(\Sigma)\geq \xi(Y)+\xi(Y^c)+2,
    \end{equation*}
    and hence we have $\xi(Y^c)\leq (3g-3)/2$.
    Thus, $(\xi(Y),\xi(Z)) = ((3g-3)/2,(3g-3)/2)$, and we have $Z=Y^c$.
    


    Suppose that $\xi\leq (3g-5)/2$.
    Let $Y=Z=\Sigma_{(g-1)/2,2}$, and we glue one boundary component of $Y$ and one boundary component of $Z$ to two distinct boundary components of a twice holed torus.
    The resulting surface is $\Sigma_{g,2}$.
    Now, since $\xi(Y)\geq \xi$ and $\xi(Z)\geq \xi$, the condition (B) cannot be satisfied in the case of $\xi\leq (3g-5)/2$.
\end{proof}

The equivalence between the condition $\xi(X)\geq (3g-3)/2$ for an essential subsurface $X\subset \Sigma$ and the condition that $X$ is a witness for $\multicurve{k}(\Sigma)$ holds exactly when $k$ satisfies
\begin{equation*}
    3g-3+b-(k-1)=\frac{3g-3}{2},
\end{equation*}
that is, $k=(3g+3)/2$.
This confirms the case of $b=2$ in the third row of \Cref{table:list-for-case-of-rel-hyperbolic}.

The following claim can be shown in essentially the same way as \Cref{claim:g-odd-b-2}.
Since there are a few minor differences, we include a proof for completeness.
\begin{claim}
    Let $g$ be odd and $b=0$. 
    Then, an integer $\xi$ satisfying both conditions (A) and (B) is $(3g-5)/2$.
\end{claim}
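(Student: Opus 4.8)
The plan is to mirror the earlier claims: bound from above every $\xi$ that can satisfy (A), check that $\xi=(3g-5)/2$ satisfies both (A) and (B), and rule out all smaller values by producing a pair of subsurfaces violating (B).

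Since $b=0$, \Cref{lemma:sufficient-condition-for-A} forces any $\xi$ satisfying (A) to be strictly less than $(3g-3)/2$; as $g$ is odd, $3g-3$ is even, so this means $\xi\le(3g-5)/2$. To see that $\xi=(3g-5)/2$ satisfies (A), realize $\Sigma_{g,0}$ as the union of two copies $Y$ and $Z$ of $\Sigma_{(g-1)/2,2}$ glued to each other along all of their boundary curves (the genus count is $2\cdot\tfrac{g-1}{2}+1=g$; here $g\ge 3$, since $\xi(\Sigma)=3g-3\ge 2$ and $g$ is odd). Then $Y$ and $Z$ are disjoint, connected, essential (the two gluing curves are non-separating, hence essential), and each has complexity $3\cdot\tfrac{g-1}{2}-3+2=(3g-5)/2$, so (A) holds.

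The crux is condition (B) for $\xi=(3g-5)/2$. Let $Y,Z\subset\Sigma$ be disjoint, co-connected, connected, essential subsurfaces with $\xi(Y),\xi(Z)\ge(3g-5)/2$. Applying \Cref{lemma:superadditivity-for-complexity} to $(Y,Y^c)$ and to $(Z,Z^c)$ gives $\xi(Y^c),\xi(Z^c)\le(3g-3)-(3g-5)/2-1=(3g-3)/2$; together with $Y\subset Z^c$ and $Z\subset Y^c$ this puts all of $\xi(Y),\xi(Z),\xi(Y^c),\xi(Z^c)$ into $\{(3g-5)/2,(3g-3)/2\}$. The key point is that, because $b=0$ and $Y$ is co-connected, $Y$ and $Y^c$ have the same number $n$ of boundary curves, so with $g_Y,g_{Y^c}$ the genera,
\begin{equation*}
    \xi(Y^c)-\xi(Y)=(3g_{Y^c}-3+n)-(3g_Y-3+n)=3(g_{Y^c}-g_Y)\equiv 0\pmod 3,
\end{equation*}
and likewise for $Z$. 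As $(3g-5)/2$ and $(3g-3)/2$ are consecutive integers, if $\xi(Y)=(3g-3)/2$ then $\xi(Y^c)\equiv(3g-3)/2\pmod 3$ and $\xi(Y^c)\le(3g-5)/2$, so $\xi(Y^c)\le(3g-9)/2<(3g-5)/2\le\xi(Z)\le\xi(Y^c)$, a contradiction; hence $\xi(Y)=(3g-5)/2$, and symmetrically $\xi(Z)=(3g-5)/2$. Then $\xi(Y^c)\equiv(3g-5)/2\pmod 3$ while $(3g-5)/2\le\xi(Z)\le\xi(Y^c)\le(3g-3)/2$, so $\xi(Y^c)=(3g-5)/2=\xi(Z)$, and \Cref{lemma:maximal-complexity-subsurface} applied inside $Y^c$ gives $Z=Y^c$ up to isotopy. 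Thus (B) holds.

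Finally, for any integer $\xi\le(3g-7)/2$ I would disprove (B): take $Y\cong\Sigma_{(g-1)/2,1}$ in $\Sigma_{g,0}$, so $Y^c\cong\Sigma_{(g+1)/2,1}$, and take $Z\cong\Sigma_{(g-1)/2,1}$ inside $Y^c$ with $Y^c\setminus Z\cong\Sigma_{1,2}$. Then $Y$ and $Z$ are disjoint, co-connected, connected, essential, each of complexity $(3g-7)/2\ge\xi$, yet $Y^c\ne Z$ (they differ by the positive-complexity piece $\Sigma_{1,2}$), so (B) fails. Combining the three steps, $(3g-5)/2$ is the unique integer satisfying both (A) and (B). I expect the main obstacle to be the case $\xi(Y)=\xi(Z)=(3g-5)/2$ of (B): the earlier claims settled their analogues using that the relevant complexity is divisible by $3$ (forcing $Y$ to have at least three boundary curves), but $(3g-5)/2$ is not divisible by $3$, so the argument must instead go through the congruence $\xi(Y)\equiv\xi(Y^c)\pmod 3$ above.
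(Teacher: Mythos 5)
Your proposal is correct and follows the same three-part structure as the paper (upper bound on $\xi$ via \Cref{lemma:sufficient-condition-for-A}, verification of (A) and (B) at $\xi=(3g-5)/2$ using the same decomposition of $\Sigma_{g,0}$ into two copies of $\Sigma_{(g-1)/2,2}$, and essentially the same counterexample $Y=Z=\Sigma_{(g-1)/2,1}$ with a $\Sigma_{1,2}$ in between for $\xi\le(3g-7)/2$). The one place where your technique genuinely differs is the crux you identified: to pin down $\xi(Y^c)$ in the verification of (B), the paper observes that $(3g-5)/2\equiv 2\pmod 3$ forces $Y$ to have at least two boundary components, which upgrades \Cref{lemma:superadditivity-for-complexity} to $\xi(\Sigma)\ge\xi(Y)+\xi(Y^c)+2$ and hence gives $\xi(Y^c)\le(3g-5)/2$ directly; you instead use that, on a closed surface, $Y$ and its co-connected complement share all boundary curves, so $\xi(Y^c)\equiv\xi(Y)\pmod 3$, which excludes the off-by-one values. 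Both arguments exploit the identity $\xi(\Sigma)=\xi(Y)+\xi(Y^c)+|\partial Y|$ together with $|\partial Y|\equiv\xi(Y)\pmod 3$, so they are two faces of the same computation; your congruence version has the small advantage of also disposing of the case $\xi(Y)=(3g-3)/2$ uniformly, whereas the paper handles that case by a separate appeal to \Cref{lemma:superadditivity-for-complexity}. Your worry at the end that the paper's ``divisible by $3$'' trick from the earlier claims would not apply here is right, but the paper's actual fix (residue $2$ forces $b_Y\ge 2$) is the same observation as yours packaged differently.
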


\begin{proof}
    If $\xi\geq (3g-3)/2$, then $\xi$ cannot satisfy the condition (A) by \Cref{lemma:sufficient-condition-for-A}.

    Let $\xi=(3g-5)/2$.
    For $Y=Z=\Sigma_{(g-1)/2,\,2}$, gluing $Y$ and $Z$ along their boundary components, we obtain $\Sigma_{g,0}$.
    Since $\xi(Y)$ and $\xi(Z)$ are equal to $(3g-5)/2$, the condition (A) is verified.
    Let $Y,Z\subset \Sigma$ be any pair of disjoint, co-connected essential subsurfaces with $\xi(Y)\geq (3g-5)/2$, $\xi(Z)\geq (3g-5)/2$.
    Then, by \Cref{lemma:superadditivity-for-complexity}, we have 
    \begin{equation*}
        \xi(Y^c)\leq \xi(\Sigma)-\xi(Y)-1\leq \frac{3g-3}{2}.
    \end{equation*}
    Therefore we may assume 
    \begin{equation*}
        (\xi(Y),\xi(Z))=\left(\frac{3g-5}{2},\frac{3g-5}{2}\right) \text{or} \left(\frac{3g-5}{2},\frac{3g-3}{2}\right).
    \end{equation*}
    Then, since $\xi(Y)\equiv 2 \pmod 3$, the number of boundary components of $Y$ is at least $2$.
    Therefore, we have 
    \begin{equation*}
        \xi(\Sigma) \geq \xi(Y)+\xi(Y^c)+2.
    \end{equation*}
    Hence, we obtain $\xi(Y^c)\leq (3g-5)/2$.
    By $Z\subset Y^c$ and $\xi(Z)\leq \xi(Y^c)$, we have $(\xi(Y),\xi(Z))=((3g-5)/2,(3g-5)/2)$ and $Y^c=Z$.
    Thus the condition (B) is verified.


    Suppose that $\xi\leq (3g-7)/2$.
    Let $Y=Z=\Sigma_{(g-1)/2,\,1}$, and we glue one boundary component of $Y$ and one boundary component of $Z$ to two distinct boundary components of a twice-holed torus.
    The resulting surface is $\Sigma_{g,0}$.
    Since $\xi(Y)\geq \xi$ and $\xi(Z)\geq \xi$, the condition (B) cannot be satisfied in the case of $\xi\leq (3g-7)/2$. 
\end{proof}

The equivalence between the condition $\xi(X)\geq (3g-5)/2$ for an essential subsurface $X\subset \Sigma$ and the condition that $X$ is a witness for $\multicurve{k}(\Sigma)$ holds exactly when $k$ satisfies
\begin{equation*}
    3g-3-(k-1)=\frac{3g-5}{2},
\end{equation*}
that is, when $k=(3g+3)/2$,.
This confirms the case of $b=0$ in the third row of \Cref{table:list-for-case-of-rel-hyperbolic}.

\section{Quasi-isometric relations between $k$-multicurve graphs}

Finally, we discuss some natural questions arising from the results of this paper.
For the $k$-multicurve graph $\multicurve{k}(\Sigma_{g,b})$, the quasi-flat rank $m(g,b,k)$ is a quasi-isometry invariant.
Fixing $g$ and $b$, the value of $m(g,b,k)$ may coincide for distinct values $k$.
In particular, there are many values of $k$ such that $\multicurve{k}(\Sigma_{g,b})$ is Gromov hyperbolic.
This leads to the following natural question: \textit{if $m(g,b,k)=m(g,b,k')$ for distinct $k, k'$, then are the graphs $\multicurve{k}(\Sigma_{g,b})$ and $\multicurve{k'}(\Sigma_{g,b})$ quasi-isometric? }
From recent work by Aramayona, Parlier and Webb \cite{aramayona2025hyperbolic}, we obtain the following corollary.

\begin{corollary}
    For every $k$ with $2\leq k\leq 3g-3+b$, the $k$-multicurve graph $\multicurve{k}(\Sigma)$ is not quasi-isometric to the curve graph $\mathcal{C}(\Sigma)$.
\end{corollary}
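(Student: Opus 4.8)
The plan is to split the statement into two regimes according to whether the $k$-multicurve graph is Gromov hyperbolic. Since $\multicurve{1}(\Sigma)=\mathcal{C}(\Sigma)$, it suffices to show that $\multicurve{k}(\Sigma)$ is not quasi-isometric to $\multicurve{1}(\Sigma)$ for $2\le k\le 3g-3+b$, and throughout one uses that $\mathcal{C}(\Sigma)$ is Gromov hyperbolic \cite{masur1999complex}, hence has quasi-flat rank $1$; in particular no quasi-isometric embedding $\Z^2\to\mathcal{C}(\Sigma)$ exists.

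When $m(g,b,k)\ge 2$ the argument is immediate. By \Cref{theorem:quasi-flat-embedding-rank-nu} there is a quasi-isometric embedding $\Z^{m(g,b,k)}\to\multicurve{k}(\Sigma)$, so $\multicurve{k}(\Sigma)$ has quasi-flat rank at least $2$; equivalently, by \Cref{corollary:criteria-hyperbolicity} it fails to be Gromov hyperbolic. As the quasi-flat rank is a quasi-isometry invariant, $\multicurve{k}(\Sigma)$ is not quasi-isometric to $\mathcal{C}(\Sigma)$. This settles every triple $(g,b,k)$ outside the hyperbolic locus; by \Cref{corollary:criteria-hyperbolicity} what remains is the locus $m(g,b,k)=1$ with $k\ge 2$, and this is nonempty (for instance $m(g,b,2)=1$ whenever $3g+b>4$), so the quasi-flat rank no longer separates the two graphs and a finer invariant is needed.

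In the remaining regime $\multicurve{k}(\Sigma)$ and $\mathcal{C}(\Sigma)$ are both Gromov hyperbolic, and here I would invoke the work of Aramayona, Parlier and Webb \cite{aramayona2025hyperbolic}: its relevant output is that a Gromov hyperbolic $k$-multicurve graph with $k\ge 2$ is never quasi-isometric to the curve graph of $\Sigma$. The mechanism is visible on the Gromov boundary. By \Cref{proposition:witness-for-k-multicurve}, for $k\ge 2$ the witnesses for $\multicurve{k}(\Sigma)$ include proper connected subsurfaces $W\subsetneq\Sigma$ of complexity $3g-2+b-k\ge 1$; fixing a multicurve of $k-1$ curves disjoint from such a $W$ and letting the remaining component run along a geodesic ray of $\mathcal{C}(W)$, the Distance Formula (\Cref{theorem:distance-formula}) shows that this traces out a quasigeodesic ray in $\multicurve{k}(\Sigma)$ whose ideal endpoint depends only on the corresponding point of $\partial\mathcal{C}(W)$. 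Thus $\partial\multicurve{k}(\Sigma)$ absorbs a copy of $\partial\mathcal{C}(W)$ for every proper witness $W$, giving it local structure absent from $\partial\mathcal{C}(\Sigma)=\mathcal{EL}(\Sigma)$, which is homogeneous under the mapping class group; since the Gromov boundary, up to homeomorphism, is a quasi-isometry invariant of a geodesic Gromov hyperbolic space, no quasi-isometry can exist.

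The main obstacle is exactly this second regime: separating two Gromov hyperbolic graphs built from the same surface requires a quasi-isometry invariant finer than the quasi-flat rank, and turning the boundary heuristic into a proof — controlling all competing subsurface projections along the rays, checking that the rays limit to genuinely new boundary points, and comparing the two boundaries up to homeomorphism — is precisely the technical input drawn from \cite{aramayona2025hyperbolic}. The borderline low-complexity cases $3g-3+b\le 2$, where the only relevant graph $\multicurve{k}(\Sigma)$ with $k\ge 2$ is the pants graph, are covered by the same reference. Combining the two regimes yields the corollary.
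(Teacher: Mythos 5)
Your proposal is correct and, at bottom, rests on the same external input as the paper: the paper derives this corollary directly from Aramayona--Parlier--Webb \cite{aramayona2025hyperbolic}, and in the genuinely hard regime (where $m(g,b,k)=1$, so both graphs are Gromov hyperbolic) you do exactly the same. The one real difference is your two-regime split: you dispose of all triples with $m(g,b,k)\geq 2$ by the elementary observation that \Cref{theorem:quasi-flat-embedding-rank-nu} gives a quasi-flat $\Z^{m(g,b,k)}\to\multicurve{k}(\Sigma)$ while $\mathcal{C}(\Sigma)$, being hyperbolic \cite{masur1999complex}, admits no quasi-isometrically embedded $\Z^2$; this correctly localizes the dependence on \cite{aramayona2025hyperbolic} to the hyperbolic locus, whereas the paper attributes the whole statement to that reference. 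That is a tidy refinement, though it buys nothing logically since the citation is needed anyway. Two small cautions on your heuristic for the second regime: the assertion that $\partial\mathcal{C}(\Sigma)=\mathcal{EL}(\Sigma)$ is ``homogeneous under the mapping class group'' is not a standard fact (the action is not transitive), so the boundary comparison cannot be waved through on that basis; and verifying that the rays supported on a proper witness $W$ really are quasigeodesics with distinct limits requires controlling the projections to all other witnesses, not just $W$. Since you explicitly defer both points to \cite{aramayona2025hyperbolic}, the argument stands, but the heuristic as written should not be mistaken for a proof.
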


For triples $(g,b,k)$ satisfying $m(g,b,k)=2$, the $k$-multicurve graph $\multicurve{k}(\Sigma_{g,b})$ can be relatively hyperbolic in some cases and thick in others, as shown in \Cref{table:list-for-case-of-rel-hyperbolic}, and these cases are not quasi-isometric to each other.

\bibliography{multicurve-interpolating.bib}
\bibliographystyle{alpha}

\end{document}